\documentclass[11pt,a4paper]{amsart}
\usepackage[T1]{fontenc}
\usepackage[utf8]{inputenc}
\usepackage{a4wide}
\usepackage{amsmath,amssymb,amsthm}
\usepackage[mathcal]{eucal}
\usepackage{tikz-cd}
\usepackage{url}
\usepackage[hidelinks]{hyperref}

\newcommand{\fn}{\operatorname{FN}}
\newcommand{\sfn}{\operatorname{SFN}}
\newcommand{\fni}{\operatorname{FNI}}
\newcommand{\fns}{\operatorname{FNS}}
\newcommand{\Fr}{\operatorname{Fr}}
\newcommand{\Clop}{\operatorname{Clop}}
\newcommand{\RO}{\operatorname{RO}}

\newcommand{\reg}{\leqslant_{\mathrm{reg}}}
\newcommand{\rc}{\leqslant_{\mathrm{rc}}}

\newcommand{\pos}[1]{{#1}^{+}}

\newcommand{\bfA}{ A}
\newcommand{\bfB}{ B}
\newcommand{\bfC}{ C}
\newcommand{\bfD}{ D}
\newcommand{\bfE}{ E}
\newcommand{\bfF}{ F}

\newcommand{\calC}{\mathcal C}

\newcommand{\calR}{\mathcal R}
\newcommand{\calS}{\mathcal S}
\newcommand{\calT}{\mathcal T}
\newcommand{\calX}{\mathcal X}

\newcommand{\At}{\operatorname{At}}
\newcommand{\Lim}{\operatorname{Lim}}

\newcommand{\ortho}{\perp}

\newtheorem{theorem}{Theorem}[section]
\newtheorem{proposition}[theorem]{Proposition}
\newtheorem{lemma}[theorem]{Lemma}
\newtheorem{corollary}[theorem]{Corollary}

\newtheorem{problem}[theorem]{Problem}
\theoremstyle{definition}
\newtheorem{definition}[theorem]{Definition}
\newtheorem{remark}[theorem]{Remark}

\newtheorem*{theoremA}{Theorem A}
\newtheorem*{theoremB}{Theorem B}
\newtheorem*{theoremC}{Theorem C}

\title[Coherent Freese--Nation separations]{Characterising projective and weakly projective Boolean algebras by coherent Freese--Nation separations}

\author[T. Kania]{Tomasz Kania}
\address[T.~Kania]{Mathematical Institute\\Czech Academy of Sciences\\\v Zitn\'a 25 \\115 67 Praha 1\\Czech Republic  and  Institute of Mathematics and Computer Science\\ Jagiellonian University\\ {\L}ojasiewicza 6, 30-348 Krak\'{o}w, Poland}
\email{kania@math.cas.cz, tomasz.marcin.kania@gmail.com}
\thanks{RVO: 67985840.}

\author[A. Kucharski]{Andrzej Kucharski}
\address[A.~Kucharski]{University of Silesia in Katowice\\Bankowa 14\\40-007 Katowice, Poland}
\email{andrzej.kucharski@us.edu.pl}

\author[S. Turek]{S\l awomir Turek}
\address[S.~Turek]{Institute of Mathematics\\Cardinal Stefan Wyszy\'nski University in Warsaw\\W\'oycickiego 1/3\\01-938 Warszawa, Poland}
\email{s.turek@uksw.edu.pl}

\subjclass[2020]{Primary 06E05; Secondary 06E10, 03E05, 54C55}
\keywords{Boolean algebra, projective Boolean algebra, weakly projective Boolean algebra, Freese--Nation property, strong Freese--Nation property, relatively complete subalgebra, Cohen algebra}
\date{14 July 2026}

\hypersetup{
  pdftitle={Characterising projective and weakly projective Boolean algebras by coherent Freese--Nation separations},
  pdfauthor={Tomasz Kania, Andrzej Kucharski, and Slawomir Turek},
  pdfsubject={Coherent finite-separation characterisations of projective and weakly projective Boolean algebras},
  pdfkeywords={Boolean algebra, projective Boolean algebra, weakly projective Boolean algebra, Freese--Nation property, strong Freese--Nation property, Cohen algebra}
}

\begin{document}

\begin{abstract}
We isolate a coherent finite-separation strengthening of the Freese--Nation property and use it to characterise projective and weakly projective Boolean algebras.  A Boolean algebra is projective if and only if it has a meet-closed decomposition base carrying such a coherent finite-separation map; it is weakly projective if and only if it has a meet-closed $\pi$-base carrying one. The global form of the same property is much stronger: the positive cone of a Boolean algebra carries a coherent finite-separation map if and only if the algebra is countable.  Thus the projective and weakly projective characterisations are intrinsically local and cannot be strengthened by requiring the whole algebra to carry the coherent map except in the countable case.
\end{abstract}

\maketitle

\section{Introduction}

A Boolean algebra $\bfA$ is \emph{projective} if, for every epimorphism $g\colon\bfC\twoheadrightarrow\bfB$ of Boolean algebras and every homomorphism $h\colon\bfA\to\bfB$, there is a homomorphism $k\colon\bfA\to\bfC$ such that $g\circ k=h$:
\[
\begin{tikzcd}[column sep=3.2em,row sep=2.6em]
& \bfC \arrow[d,two heads,"g"] \\
\bfA \arrow[ur,dashed,"k"] \arrow[r,"h"'] & \bfB .
\end{tikzcd}
\]
Halmos proved that every countable Boolean algebra is projective \cite{Halmos}.  In the uncountable case, projectivity is governed by the classical theory of relatively complete subalgebras, retracts of free Boolean algebras, and additive skeletons.  Equivalently, on the Stone side one is studying zero-dimensional compact spaces obtained as retracts of Cantor cubes, or, in the language of Shchepin and Haydon, the zero-dimensional part of the theory of Dugundji and AE$(0)$ compacta \cite{Haydon,Shchepin}.  These structural descriptions are exact and powerful, but they are global: they describe a whole transfinite resolution of the algebra.

The Freese--Nation property gives a quite different kind of information.  It asks for finite interpolation data attached to individual elements.  In Boolean algebras this is equivalent to a finite separation principle: two disjoint elements can be separated by two disjoint members of a finite set assigned to both of them.  This makes the Freese--Nation property a natural finite-combinatorial shadow of projectivity.  Freese and Nation proved such finite interpolation for projective lattices \cite{FreeseNation}; Heindorf and Shapiro later showed that, for Boolean algebras, the ordinary Freese--Nation property is controlled by clubs of countable relatively complete subalgebras and is weaker than projectivity in general \cite{HeindorfShapiro}.

The closest established strengthening is the \emph{strong Freese--Nation property} $(\sfn)$.  Two subalgebras commute when any two comparable elements, one from each subalgebra, admit an interpolant in their intersection; a Boolean algebra has $(\sfn)$ when it has a cofinal family of finite subalgebras which commute pairwise.  Heindorf and Shapiro proved
\[
        \text{projective}\quad\Longrightarrow\quad(\sfn)
        \quad\Longrightarrow\quad(\fn).
\]
Neither implication reverses in general: there are uncountable $(\sfn)$ algebras which are not projective, while Milovich constructed an algebra with $(\fn)$ but without $(\sfn)$ \cite{HeindorfShapiro,MilovichSFN}.  Milovich also gave game-theoretic and locally finite characterisations of projective Boolean algebras \cite{MilovichTeam}.  Those descriptions organise compatible finite subalgebras or finite local systems.  The condition introduced here is of a different form: it is carried by a single elementwise map on a decomposition base, and its compatibility is expressed directly through non-zero meets.  Its equivalence with projectivity therefore locates it strictly above $(\sfn)$ in general, while providing a particularly economical local certificate.

The point of this paper is that a single extra coherence requirement recovers projectivity exactly.  We require the finite data to respect non-zero intersections: if $a\wedge b>0$, then the finite set assigned to $a\wedge b$ must be contained in the family of non-zero finite meets generated by the finite sets assigned to $a$ and to $b$.  This is a local version of compatibility with refinement of basic clopen sets.  It is stronger than ordinary Freese--Nation interpolation, but still finite and local: the map is required only on a suitable base, not on the whole algebra.

This distinction between local and global is essential.  Ordinary Freese--Nation maps may live on large uncountable algebras.  By contrast, the global coherent version introduced here forces countability.  Thus the correct finite-combinatorial replacement for a projective skeleton is not a global coherent map on $\pos{\bfA}$, but a coherent map on a meet-closed decomposition base.  The resulting condition is useful because it turns the transfinite skeleton into a checkable finite rule on the pieces from which the algebra is assembled.  It also suggests new invariants, such as possible uniform bounds on the sizes of the witnessing finite sets.

Here is the first main result.

\begin{theoremA}
For a Boolean algebra $\bfA$, the following are equivalent.
\begin{enumerate}
\item $\bfA$ is projective.
\item $\bfA$ has a base $X\subseteq \pos{\bfA}$, closed under non-zero meets, which carries a coherent finite-separation map.
\end{enumerate}
\end{theoremA}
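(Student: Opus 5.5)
The plan is to prove both implications through the classical skeleton characterisation of projectivity (Ščepin, Haydon, Koppelberg). A Boolean algebra $\bfA$ is projective if and only if it has an \emph{additive skeleton}: a family $\mathcal S$ of relatively complete subalgebras of $\bfA$ which contains $\{0,1\}$, is closed under unions of chains and under the subalgebra generated by arbitrary subfamilies, and has every element of $\bfA$ lying in some countable member of $\mathcal S$. Equivalently, $\bfA$ is the union of a continuous chain $(\bfA_\alpha)$ with $\bfA_\alpha\rc\bfA$ and each $\bfA_{\alpha+1}$ countably generated over $\bfA_\alpha$.

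I read a finite-separation map on $X$ as a map $F\colon X\to[\pos{\bfA}]^{<\omega}$ satisfying the following: whenever $a,b\in X$ and $a\wedge b=0$, there are $c,d\in F(a)\cap F(b)$ with $a\le c$, $b\le d$ and $c\wedge d=0$. Coherence is the extra condition that $a\wedge b>0$ implies $F(a\wedge b)\subseteq\{\text{non-zero finite meets from }F(a)\cup F(b)\}$.

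\emph{Step 1: the model case $\Fr(\kappa)$.} Take $X$ to be the non-zero elementary conjunctions $x_s=\bigwedge_{\alpha\in\operatorname{dom}s}x_\alpha^{s(\alpha)}$, where $s$ ranges over finite partial functions $\kappa\to 2$. This set is a base and is closed under non-zero meets. Put $F(x_s)=\{x_\alpha,-x_\alpha:\alpha\in\operatorname{dom}s\}$.
\begin{itemize}
\item If $x_s\wedge x_t=0$, then $s$ and $t$ conflict at some coordinate $\alpha$, and $x_\alpha,-x_\alpha$ separate $x_s$ from $x_t$.
\item $F(x_s\wedge x_t)=F(x_s)\cup F(x_t)$, which gives coherence trivially.
\end{itemize}

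\emph{Step 2: (1)$\Rightarrow$(2) in general.} Here a retraction onto $\bfA$ does not preserve meet-closedness, so I would instead run a transfinite construction along a continuous chain $(\bfA_\alpha)$ as above. The inductive hypothesis is a meet-closed base $X_\alpha$ of $\bfA_\alpha$ with coherent $F_\alpha$; at limits take unions.

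At a successor stage, $\bfA_{\alpha+1}$ is generated over $\bfA_\alpha$ by countably many elements $e_0,e_1,\dots$. Since $\bfA_\alpha\rc\bfA_{\alpha+1}$, one can adjoin them one at a time, each new generator having relatively complete projections onto what has already been built. I would set
\[
X_{\alpha+1}=\{x\wedge t\ne 0 : x\in X_\alpha,\ t \text{ a finite conjunction of } \pm e_n\},
\]
and let $F(x\wedge t)$ consist of $F_\alpha(x)$, the literals $\pm e_n$ occurring in $t$, and the finitely many projections of those literals onto $\bfA_\alpha$, which are themselves represented in $X_\alpha$. This adapts Halmos's countable argument to the relative setting. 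Separation of disjoint $x\wedge t$, $y\wedge u$ then splits into two cases: a conflict inside the new literals, or a failure of separation pushed down into $\bfA_\alpha$ via the projections. \textbf{I expect this successor step to be the main obstacle.} The projections must be put into $F$ without breaking coherence, and the real content of the whole theorem is showing that the resulting $F$ still has the non-zero-meets property.

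\emph{Step 3: (2)$\Rightarrow$(1).} Call $S\subseteq X$ \emph{closed} if it is closed under non-zero meets and under $a\mapsto F(a)\cap X$; if needed, assume first that $F$ takes values in $X$ by refining it. Closed sets are closed under unions of chains, and every countable subset of $X$ lies in a countable closed set. Let $\bfB_S$ be the subalgebra generated by $S$. The key claim is that $\bfB_S\rc\bfA$.

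For $a\in X$ I would exhibit the least element of $\bfB_S$ above $a$ explicitly. Consider the non-zero meets $m$ of members of $F(a)$ lying in $S$ that satisfy $m\ge a$; since $F(a)$ is finite, there are finitely many such $m$, and I take their meet. If some $b\in\bfB_S$ with $b\ge a$ fell strictly below this candidate, I would write $b$ in terms of $S$, pick a piece $s\in S$ disjoint from $b$ but meeting the candidate, and apply separation to the pair $s,a$. Coherence is what guarantees that the separating elements lie in $F(a)$ and in $S$, which yields a contradiction. Since $X$ is a base, this extends from $a\in X$ to arbitrary elements of $\bfA$.

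Additivity also follows: the subalgebra generated by several $\bfB_{S_i}$ is $\bfB_{\mathrm{cl}(\bigcup S_i)}$. So $\{\bfB_S : S\text{ closed}\}$ is an additive skeleton with countable members covering $\bfA$, and $\bfA$ is projective by the Ščepin--Haydon theorem.
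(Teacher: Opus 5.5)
Your direction (2)$\Rightarrow$(1) follows the paper's route (Proposition~\ref{prop:base-to-projective} via Lemma~\ref{lem:closed-subalgebra}) and is sound after two repairs.

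First, you use coherence in the wrong place. It plays no role in the least-cover step. The separating element $v\geqslant a$ lies in $F(a)$ by the definition of separation. It lies in $S$ because $F$ takes values in $X$, as Definition~\ref{def:fns-star} requires, and $S$ is closed under $F$ applied to the auxiliary piece $s\in S$. Coherence is needed precisely for your unproved identity $\langle\bigcup_i\bfB_{S_i}\rangle=\bfB_{\operatorname{cl}(\bigcup_i S_i)}$. By Lemma~\ref{lem:finite-meet-star}, the non-zero meets of members of $\bigcup_i S_i$ already form an $F$-closed set, so the closure adds nothing outside $\langle\bigcup_i S_i\rangle$.

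Second, a closed $S$ is not evidently a base of $\langle S\rangle$, since complements occur. So ``pick a piece $s\in S$'' needs the fact that $S$ is a $\pi$-base of $\langle S\rangle$. This follows from separation: take $x\in X$ below $s_0\wedge-t_1\wedge\cdots\wedge-t_l$ and separate $x$ from each $t_j$, obtaining members of $F(t_j)\subseteq S$ lying between $x$ and $-t_j$. A $\pi$-base is all the least-cover argument uses.

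The genuine gap is (1)$\Rightarrow$(2). You leave the successor step undone, and the construction you sketch does not work as stated.

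Projections of single literals do not control disjointness of conjunctions. Let $\bfA_{\alpha+1}$ be the free product of $\bfA_\alpha$ with the eight-element algebra with atoms $e_0,e_1,e_2$, generated over $\bfA_\alpha$ by $e_0,e_1$. Then $q(e_0)=q(e_1)=1$. For the representations $x\wedge e_0$ and $x\wedge e_1$, your $F$-values meet only inside $\bfA_\alpha$. There every element above $x\wedge e_i$ is above $x\wedge q(e_i)=x$, so the disjoint pair $x\wedge e_0$, $x\wedge e_1$ is not separated.

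Rewriting $x\wedge e_0$ as $x\wedge e_0\wedge-e_1$ rescues this instance, but that exposes the real problem. $F(x\wedge t)$ depends on the non-unique representation $x\wedge t$, so $F$ is not well defined, and coherence needs a choice of representations compatible with meets. What separation actually needs is the projection $q(x\wedge t)=x\wedge q(t)$ of each base element. This is neither determined by the $q(e_n)$ nor in general a member of $X_\alpha$, whereas the map must take values in the base. Adjoining given generators one at a time with relatively complete intermediate steps is not automatic either; Theorem~\ref{thm:projective-known}(3) is what supplies such a chain.

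The missing idea is the paper's successor step for $\bfA_{\alpha+1}=\bfA_\alpha(e)$:
\begin{itemize}
\item Let $Q=q_\alpha^{\alpha+1}$ and $a_i=Q(e^i)$. Choose a finite partition $R_\alpha\subseteq X_\alpha$ of $1$ refining $a_0\wedge-a_1$, $a_0\wedge a_1$, $a_1\wedge-a_0$.
\item Add the non-zero elements $x\wedge r\wedge e^i$ with $x\in X_\alpha\cup\{1\}$ and $r\in R_\alpha$. Then $Q(x\wedge r\wedge e^i)=x\wedge r\in X_\alpha$, and $Q$ preserves non-zero meets on $X_{\alpha+1}$.
\item Put $s_{\alpha+1}(u)=T_\alpha\cup s_\alpha(Q(u))$ with $T_\alpha=\{r\wedge e^i\}\setminus\{0\}$. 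This is coherent.
\item It also separates. If $Q(u)\wedge Q(v)=0$, separate through $s_\alpha$. Otherwise Lemma~\ref{lem:q-basic}(3) and the disjointness of $R_\alpha$ force both elements to be new, with the same $r$ and opposite literals, and $T_\alpha$ separates them.
\end{itemize}
Your plan of taking unions at limits is legitimate only when each $F_{\alpha+1}$ extends $F_\alpha$. The paper adds $T_\alpha$ to old elements too, which is why it needs the jump sets of Lemma~\ref{lem:jump-sets}. Adding $T_\alpha$ only to members of $X_{\alpha+1}\setminus X_\alpha$ also works and keeps the maps extending.
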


The weakly projective analogue is obtained by replacing bases by $\pi$-bases.  Recall that weak projectivity is the co-absolute, or completion-invariant, version of projectivity: a Boolean algebra is weakly projective precisely when it is co-complete with a projective Boolean algebra, equivalently when it has a dense projective subalgebra.

\begin{theoremB}
For a Boolean algebra $\bfA$, the following are equivalent.
\begin{enumerate}
\item $\bfA$ is weakly projective.
\item $\bfA$ has a $\pi$-base $X\subseteq \pos{\bfA}$, closed under non-zero meets, which carries a coherent finite-separation map.
\end{enumerate}
\end{theoremB}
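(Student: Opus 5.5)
The plan is to reduce Theorem B to Theorem A, using the characterisation recalled just before the statement: $\bfA$ is weakly projective if and only if it has a dense projective subalgebra, equivalently if and only if it is co-complete with a projective algebra. The guiding observation is that a $\pi$-base of $\bfA$ is the same as a dense subset of $\pos{\bfA}$. Moreover, the coherence condition on a map $\varphi\colon X\to[X]^{<\omega}$ mentions only the order on $X$, disjointness, and non-zero meets computed inside $X$. For a meet-closed $X$, these are the same whether they are computed in $\bfA$ or in any subalgebra containing $X$.

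For (1)$\Rightarrow$(2), let $\bfB\le\bfA$ be a dense projective subalgebra. By Theorem A, $\bfB$ has a base $X\subseteq\pos{\bfB}$, closed under non-zero meets, which carries a coherent finite-separation map $\varphi$. I then check three things.
\begin{enumerate}
\item $X$ is a $\pi$-base of $\bfA$. For $a\in\pos{\bfA}$ there is $b\in\pos{\bfB}$ with $b\le a$, and then some $x\in X$ with $x\le b$.
\item Meets of elements of $X$ taken in $\bfA$ agree with those taken in $\bfB$, because $\bfB$ is a subalgebra. So $X$ remains closed under non-zero meets in $\bfA$.
\item The separation clause (disjoint $a,b$ separated by disjoint members of $\varphi(a)\cup\varphi(b)$) and the coherence clause transfer verbatim, since they involve only elements of $X$.
\end{enumerate}
If the definition of a finite-separation map is phrased relative to the ambient algebra, for instance requiring separation of arbitrary elements of $\pos{\bfA}$ and not only of elements of $X$, I would first rewrite it using density so that it refers only to $X$.

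For (2)$\Rightarrow$(1), let $X$ be a meet-closed $\pi$-base of $\bfA$ carrying a coherent map $\varphi$, and let $\bfB$ be the subalgebra of $\bfA$ generated by $X$. Then $\bfB$ is dense in $\bfA$, since $X\subseteq\bfB$ is a $\pi$-base. I then want to show that $X$ is a \emph{base} of $\bfB$ in the sense used in Theorem A, meaning that every element of $\pos{\bfB}$ is a finite join of members of $X$. If that holds, Theorem A makes $\bfB$ projective, and so $\bfA$ is weakly projective.

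The main obstacle is exactly this step. A Boolean combination of elements of $X$ involves complements, and $x\wedge -y$ need not be a join of elements of $X$. I see two ways to handle it.
\begin{enumerate}
\item First, I would try to show that $\varphi$ itself forces the needed closure. If $y\not\ge x$, the separation clause applied to a piece of $x$ disjoint from $y$ should provide elements of $X$ covering $x\wedge -y$, at least up to density. Coherence should then allow iterating this finitely often.
\item If that fails, I would replace $\bfB$ by the Boolean algebra $\bfB'$ whose Stone space is built from $X$ as a base. Concretely, take the algebra generated by the abstract poset $X$, with the relations $x\wedge y=$ (meet in $X$) and $x\wedge y=0$ when they are disjoint. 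The map $\varphi$ then makes $X$ a meet-closed base of $\bfB'$ carrying a coherent map. In addition, $\bfB'$ and $\bfA$ share the dense set $X$, so they are co-complete, with the same completion $\RO$ of the poset $X$.
\end{enumerate}
Either way, Theorem A gives a projective algebra co-complete with $\bfA$, which is weak projectivity.

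I expect the verification in the second route, that $X$ is genuinely a decomposition base of $\bfB'$ and that the relations are consistent, to be the delicate point. This is where the meet-closure hypothesis will be used essentially.
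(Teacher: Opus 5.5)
Your (1)$\Rightarrow$(2) is exactly the paper's argument (Proposition~\ref{prop:weak-to-pibase}). The gap is in (2)$\Rightarrow$(1). The step you single out as the main obstacle is never proved, and neither route closes it as stated.

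\textbf{Route (ii) fails outright.} Let $\bfB'$ be the algebra presented by generators $\bar x$ ($x\in X$) subject only to the meet and disjointness relations. Nothing forces a finite join of generators to be $1$: the homomorphism $\bfB'\to\{0,1\}$ sending every generator to $0$ respects all the relations. Hence $1_{\bfB'}$ is not a finite join of members of $X$. Now take a finite $G\subseteq X$ with $\bigvee G=1$ in $\bfA$ (for instance $G=\{y\}\cup U_y$ with $U_y$ as below). Then $-\bigvee_{g\in G}\bar g$ is a non-zero element of $\bfB'$ lying above no generator; to see this, map to $\bfA$ via $\bar x\mapsto x$. So $X$ is neither a base nor a dense subset of $\bfB'$, and nothing makes $\bfB'$ co-complete with $\bfA$.

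\textbf{Route (i) aims at a false target.} It targets ``$X$ is a base of $\langle X\rangle$ in the sense of Theorem~A''. With the paper's definition (finite joins of \emph{pairwise disjoint} members) this is false in general. In the clopen algebra of $3^\omega$, let $X$ consist of the sets $[t0]\cup[t1]$, $[t1]\cup[t2]$ and $[t1]$ for $t\in 3^{<\omega}$; call $t$ their node. Let $s(x)$ consist of the members of $X$ whose node is $r$ or $ri$ ($i<3$) for some initial segment $r$ of the node of $x$.
\begin{itemize}
\item $X$ is a meet-closed $\pi$-base and $\langle X\rangle$ is the whole clopen algebra.
\item $s$ is a finite-separation map. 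Disjoint members either have incomparable nodes, and are separated by members attached to two children of the longest common initial segment; or they have nested nodes, and are separated by the shallower member and a member attached to the next node towards the deeper one.
\item $s$ is coherent, because non-zero meets are taken at a single node or equal the deeper member, and $s$ depends only on the node.
\item No $[w0]$ is a finite disjoint union of members of $X$. The piece containing $w0222\cdots$ must be $[v1]\cup[v2]$ with $v$ equal to $w0$ followed by some twos. Every other piece meeting $[v0]$ lies inside $[v0]$, so $[v0]$ is tiled by fewer pieces, and induction on the number of pieces gives a contradiction.
\end{itemize}

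\textbf{The weaker claim is true, by a different argument.} Your paraphrase of ``base'' drops disjointness, and in that sense the claim holds. The argument uses finiteness of $s(y)$, not coherence. For $y\in X$ put $U_y=\{u\in s(y):u\wedge y=0\}$. If some $p\in X$ lay below $-y\wedge-\bigvee U_y$, separating $p$ from $y$ would give $c\in s(y)$ with $p\leqslant c$ and $c\wedge y=0$. Then $c\in U_y$, which is absurd. Since $X$ is a $\pi$-base of $\bfA$, this gives $-y=\bigvee U_y$. Distributivity and meet-closure then show that finite joins of members of $X$ form the subalgebra $\langle X\rangle$.

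Even so, Theorem~A cannot be quoted verbatim. You must observe that Proposition~\ref{prop:base-to-projective} and the relatively complete half of Lemma~\ref{lem:closed-subalgebra} never use disjointness. The formula $q(b)=\bigvee_i q_X(x_i)$ yields the least cover for any finite cover $b=\bigvee_i x_i$.

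\textbf{Comparison once repaired.} With these additions your reduction becomes a valid proof, genuinely different from the paper's. The paper does not pass through a projective subalgebra. It closes countable sets under $s$ and meets, and uses Lemma~\ref{lem:regular-criterion} together with coherence to show that generated unions of such pieces are regular, via Lemma~\ref{lem:closed-subalgebra}. It then invokes the Koppelberg--Bandlow characterisation, Theorem~\ref{thm:weak-known}(3). Your repaired route instead shows the stronger fact that $\langle X\rangle$ is itself a dense projective subalgebra. It then needs only the trivial implication from a dense projective subalgebra to weak projectivity.
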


Theorem~C explains why the base formulation cannot be replaced by a global one.

\begin{theoremC}
For a Boolean algebra $\bfA$, the positive cone $\pos{\bfA}$ carries a coherent finite-separation map if and only if $\bfA$ is countable.
\end{theoremC}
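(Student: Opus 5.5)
The proof splits into the two implications. Throughout I read the definition as follows: a map $F$ from a subset $X\subseteq\pos{\bfA}$ to finite subsets of $\pos{\bfA}$ is a \emph{finite-separation map} if, whenever $a,b\in X$ and $a\wedge b=0$, there are disjoint $c,d\in F(a)\cap F(b)$ with $a\le c$ and $b\le d$. It is \emph{coherent} if, whenever $a\wedge b>0$, every member of $F(a\wedge b)$ is a non-zero finite meet of members of $F(a)\cup F(b)$. If the paper's formal definition differs, the plan below should be adjusted accordingly.

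A first observation will be used in both directions. Apply separation to $b$ and $-b$ when both are non-zero. This gives disjoint $c\ge b$ and $d\ge -b$, which forces $c=b$ and $d=-b$. Hence
\[
b,\,-b\in F(b)\cap F(-b).
\]
So $F(b)$ always contains $b$ itself, and the map has to encode complements as well.

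\textbf{Countable $\Rightarrow$ a coherent map exists.} I would enumerate $\pos{\bfA}=\{a_n:n<\omega\}$ and build $F$ by recursion on $n$, keeping an increasing chain of finite subalgebras $\bfB_n$ with $a_0,\dots,a_n\in\bfB_n$. The naive choice $F(a)=\pos{\bfB_{n(a)}}$ fails coherence, because $a\wedge b$ may first appear much later than $a$ and $b$.

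So instead I would choose the values so that $F(a)$ contains the atoms of a finite partition refining $a$, and so that $F(a\wedge b)$ is always made of pieces of the form $p\wedge q$ with $p\in F(a)$ and $q\in F(b)$. Concretely, I would try $F(a)=\{a,-a\}\cup\{\text{atoms of }\bfB_{n(a)}\text{ below }a\}$. At each stage I would refine only inside meets already present. Checking that the recursion can keep coherence for all of the countably many pairs arriving later is the part I am least sure of. If the direct recursion fails, a fallback is to go through Halmos's theorem: a countable algebra is projective, so Theorem~A supplies a coherent map on a countable meet-closed base. I would then extend it to all of $\pos{\bfA}$ by writing each element as a finite join of base elements and letting $F$ of the join be the finite set of meets of the $F$-values involved.

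\textbf{A coherent map on $\pos{\bfA}$ $\Rightarrow$ countable.} Here I would use an elementary-submodel argument. Take a countable $M\prec H(\theta)$ containing $\bfA$ and $F$. Then for every $a\in M\cap\pos{\bfA}$ we have $F(a)\subseteq M$, since $F(a)$ is finite and lies in $M$. Suppose towards a contradiction that some $b\in\pos{\bfA}\setminus M$ exists.

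\begin{enumerate}
\item Using the ordinary Freese--Nation consequence, $M\cap\bfA$ is relatively complete, so $b$ has a largest lower bound and a least upper bound in $M$. Shrinking $b$ inside this interval, I would reduce to the case where $b$ splits some $a\in M$, meaning that both $a\wedge b$ and $a\wedge -b$ are non-zero. I would also arrange that $a$ is minimal in a suitable sense.
\item Apply coherence to the pairs $(a,b)$ and $(a,-b)$, together with the observation $b\in F(b)$. This expresses $a\wedge b$ as a meet of elements of $F(a)\subseteq M$ and of $F(b)$.
\item Then run separation on $a\wedge b$ and $a\wedge -b$, whose common witnesses must come from both $F(a\wedge b)$ and $F(a\wedge -b)$.
\end{enumerate}

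The target is to show that every element of $F(b)$ that actually matters is forced to be a meet of elements of $F(a)$, and hence lies in $M$. Then $a\wedge b\in M$. Iterating this over a finite partition of $1$ taken from $M$ would give $b\in M$, a contradiction.

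The main obstacle is this last step: controlling the contribution of $F(b)$ inside the meets produced by coherence. The danger is a circularity, because $b$ itself appears in $F(b)$. What I would try first is to choose $a\in M$ minimal so that the $M$-part of $F(a\wedge b)$ already determines $a\wedge b$. A cruder fallback is a $\Delta$-system argument: take $\aleph_1$ many pairwise distinct elements, thin the family so that the finite sets $F(\cdot)$ form a $\Delta$-system, and derive a contradiction with coherence applied to meets of two members of the family.
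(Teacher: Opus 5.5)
\textbf{There is a genuine gap in each direction.} The countable direction has an easy fix. The uncountable direction is missing its key idea.

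\textbf{Countable direction.} This direction is not actually proved, and the obstacle you cite is not real. Let $n(a)$ be the least $n$ with $a\in\bfB_n$, rather than the position of $a$ in the enumeration. Then $a\wedge b\in\bfB_m$ for $m=\max\{n(a),n(b)\}$, simply because $\bfB_m$ is a subalgebra. So $F(a)=\pos{\bfB_{n(a)}}$ satisfies $F(a\wedge b)\subseteq\pos{\bfB_m}=F(a)\cup F(b)$. For disjoint $a,b$ with $n(a)\leqslant n(b)$, the pair $a,-a$ lies in $F(a)\cap F(b)$ and separates them. This naive map is exactly the paper's proof.

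Your replacement $F(a)=\{a,-a\}\cup\{\text{atoms of }\bfB_{n(a)}\text{ below }a\}$ fails separation outright: if $a\wedge b=0$ and $b\neq-a$, then $F(a)\cap F(b)=\emptyset$. The fallback through Theorem~A is not a proof either. Your own observation shows that $a,-a\in F(a)$ is necessary, yet nothing puts them into $F(a)$ when $a$ is a join of base elements. Coherence for pairs of such joins is also never checked.

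\textbf{Uncountable direction.} This is the serious gap. Steps~2 and~3 extract no information. Since $b,-b\in F(b)\cap F(-b)$, their conclusions are already witnessed trivially by $a$, $b$ and $-b$:
\begin{itemize}
\item Step~2 says $a\wedge b$ is a meet of members of $F(a)\cup F(b)$, which $a$ and $b$ already give.
\item Step~3 says a separating pair lies in $(F(a)\cup F(b))^{\wedge}\cap(F(a)\cup F(-b))^{\wedge}$, which $b,-b$ already satisfy.
\end{itemize}
This is the circularity you flag, and nothing in the plan breaks it. The $\Delta$-system fallback is equally unsupported. A $\Delta$-system of the finite sets $F(b_\xi)$ gives no control over how members of $F(b_\xi)$ interact with the other $b_\eta$.

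The missing idea is to apply your observation to the meet itself. If $0<a\wedge b<1$, separating $a\wedge b$ from its complement puts $-(a\wedge b)$ into $F(a\wedge b)$. Coherence then forces the \emph{join} $-a\vee-b$ to be a meet $\bigwedge W$ with $W\subseteq F(a)\cup F(b)$. Equivalently, $a\wedge b=\bigvee_{w\in W}(-w)$ with every $-w\leqslant a\wedge b$.

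The paper uses exactly this, via Lemma~\ref{lem:finite-meet-star}, for $c=\bigwedge_{\alpha\in H}b_\alpha$. The steps are:
\begin{itemize}
\item Theorem~\ref{thm:projective-main} applies because $\pos{\bfA}$ is itself a meet-closed base, so $\bfA$ is projective and hence a subalgebra of a free algebra.
\item Lemma~\ref{lem:delta-pattern} normalises an uncountable family $(b_\xi)$.
\item Take $|H|=m+2$, where $m$ bounds how many private supports the supports of each $s(b_\xi)$ can meet.
\item A pigeonhole on private supports plus finite-support independence gives the contradiction.
\end{itemize}

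\textbf{How the trick would close your route.} The same trick also completes your elementary-submodel argument, without free algebras.
\begin{itemize}
\item Use $M\cap\bfA\rc\bfA$ (your Step~1, via Lemma~\ref{lem:closed-subalgebra}), with least-cover map $q$. Replace $b$ by $b\wedge q(-b)$, so that $b>0$ contains no non-zero element of $M$.
\item Take $a\in M$ with $a\wedge b>0$. Each $-w$ with $w\in F(a)\subseteq M$ is an element of $M$ below $b$, hence $0$. So $a\wedge b$ is a join of a subset of the finite set $\{-w:w\in F(b)\}$.
\item Thus $a\mapsto a\wedge b$ has finite range on $M\cap\bfA$. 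Let $b_1$ be an atom of this finite image algebra. Then $b_1\leqslant a$ or $b_1\leqslant-a$ for every $a\in M\cap\bfA$.
\item By minimality, $q(b_1)$ is an atom of $M\cap\bfA$, hence of $\bfA$ by elementarity. So $b_1=q(b_1)\in M$, a contradiction.
\end{itemize}
As written, however, your proposal contains neither this step nor a substitute for it.
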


Theorem~C is the obstruction which makes the local formulation sharp.  Its uncountable part first uses Theorem~A to reduce to a subalgebra of a free Boolean algebra.  A $\Delta$-system and thinning argument then produces many elements with a common finite root, pairwise disjoint private supports, and one fixed proper Boolean coefficient.  Coherence forces the complement of a finite meet to be represented as a meet of finitely many values of the separation map; finite-support independence in the free algebra gives the contradiction.

The paper is organised as follows.  Section~\ref{sec:preliminaries} fixes notation, recalls the precise structural theorems used later, and proves the elementary closure lemmas for finite separation maps.  Section~\ref{sec:projective} proves Theorem~A; the proof of the projective-to-base implication is given with full successor and limit-stage bookkeeping.  Section~\ref{sec:weakly-projective} proves Theorem~B and the Cohen-algebra corollary.  Section~\ref{sec:global} proves Theorem~C.  The final section gives the Stone-dual form of the main results, records further questions, and uses the orthomodular boundary case to isolate the genuinely Boolean roles of distributivity, clopen decomposition, and finite-support independence.

\section{Preliminaries}\label{sec:preliminaries}

All Boolean algebras are written additively and multiplicatively as $(\bfA,\vee,\wedge,-,0,1)$.  We write
\[
        \pos{\bfA}=\bfA\setminus\{0\}
\] for the positive cone of \(\bfA\).
If $F$ is a finite subset of a Boolean algebra, put
\[
        F^{\wedge}=\{\bigwedge G: \emptyset\neq G\subseteq F\}\setminus\{0\}.
\]
Thus $F^{\wedge}$ is finite and is closed under non-zero finite meets.

A subset $X\subseteq\pos{\bfA}$ is a \emph{base} of $\bfA$ if every non-zero element of $\bfA$ is a finite join of pairwise disjoint members of $X$.  We also call such an $X$ a \emph{decomposition base}, in order to emphasise that the members occurring in each decomposition are disjoint, not that the whole family $X$ is pairwise disjoint.  Throughout the paper, ``base'' has this meaning.  A set $X\subseteq\pos{\bfA}$ is a \emph{$\pi$-base} if for every $a\in\pos{\bfA}$ there is $x\in X$ with $x\leqslant a$.  We shall always state explicitly when a base or $\pi$-base is closed under non-zero meets.

For $S\subseteq\bfA$, we denote by $\langle S\rangle$ the subalgebra of $\bfA$ generated by $S$.

\begin{definition}\label{def:fn}
Let $(P,\leqslant)$ be a partially ordered set.  The poset $P$ has the \emph{Freese--Nation property}, abbreviated $(\fn)$, if for every $p\in P$ there are finite sets
\[
        U(p)\subseteq\{r\in P:p\leqslant r\},
        \qquad
        L(p)\subseteq\{r\in P:r\leqslant p\},
\]
such that, whenever $p\leqslant q$, one has $U(p)\cap L(q)\neq\emptyset$.

Equivalently, $P$ has the \emph{finite-interpolation form} $(\fni)$ if there is a map
\[
        f\colon P\longrightarrow [P]^{<\omega}
\]
such that, whenever $p\leqslant q$, some $r\in f(p)\cap f(q)$ satisfies
\[
        p\leqslant r\leqslant q .
\]
A Boolean algebra has $(\fn)$, respectively $(\fni)$, if its underlying ordered set has that property.
\end{definition}

\begin{definition}\label{def:fns-star}
Let $X\subseteq\pos{\bfA}$ be closed under non-zero meets.  A map
\[
        s\colon X\longrightarrow [X]^{<\omega}
\]
is a \emph{finite-separation map}, or \emph{witnesses the finite-separation property} $(\fns)$ on $X$, if for every $a,b\in X$ with $a\wedge b=0$ there are $c,d\in s(a)\cap s(b)$ such that
\[
        c\wedge d=0,\qquad a\leqslant c,\qquad b\leqslant d .
\]
We say that $s$ is \emph{coherent}, or that $s$ witnesses $(\fns)^*$ on $X$, if, in addition,
\begin{equation}\label{eq:star}
        s(a\wedge b)\subseteq (s(a)\cup s(b))^{\wedge}
\end{equation}
whenever $a,b\in X$ and $a\wedge b>0$.
When $X=\pos{\bfA}$ we say that $\bfA$ has the \emph{global} $(\fns)$ property or the \emph{global} $(\fns)^*$ property, respectively.  In this paper a \emph{finite-separation map} means a witness to $(\fns)$; when the extra condition \eqref{eq:star} is included we say \emph{coherent finite-separation map}.  Accordingly, \emph{coherent finite-separation property} is the verbal name for $(\fns)^*$.
\end{definition}

For Boolean algebras the global finite-separation property is just the Freese--Nation property in separation form.  We record the elementary equivalence in order to fix the convention used throughout the paper.

\begin{proposition}[{\cite[Observation 2.2.2]{HeindorfShapiro}}]\label{prop:fn-fns-equivalence}
For a Boolean algebra $\bfA$, the properties $(\fn)$, $(\fni)$, and global $(\fns)$ are equivalent.
\end{proposition}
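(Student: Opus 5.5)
I will prove the cycle $(\fn)\Rightarrow(\fni)\Rightarrow$ global $(\fns)\Rightarrow(\fn)$. Throughout I use the complementation duality $p\leqslant q \iff p\wedge(-q)=0$, which turns comparability questions into disjointness questions. I also use the fact that all the required sets are finite, so closing them under complements keeps them finite.

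\emph{$(\fn)\Rightarrow(\fni)$.} Given $U,L$ witnessing $(\fn)$, put $f(p)=U(p)\cup L(p)$. If $p\leqslant q$, choose $r\in U(p)\cap L(q)$. Then $p\leqslant r\leqslant q$ and $r\in f(p)\cap f(q)$, as required.

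\emph{$(\fni)\Rightarrow$ global $(\fns)$.} Let $f$ witness $(\fni)$ on $\bfA$. For $a\in\pos{\bfA}$ put
\[
 s(a)=\bigl(f(a)\cup f(-a)\cup\{-r: r\in f(a)\cup f(-a)\}\bigr)\setminus\{0\}.
\]
This is a finite subset of $\pos{\bfA}$. Suppose $a\wedge b=0$ with $a,b>0$. Then $a\leqslant -b$, so there is $r\in f(a)\cap f(-b)$ with $a\leqslant r\leqslant -b$. Set $c=r$ and $d=-r$. We have $c\geqslant a>0$ and $d\geqslant b>0$, so both are non-zero. Also $c\wedge d=0$, $a\leqslant c$ and $b\leqslant d$. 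Membership holds because $r\in f(a)$ gives $c,d\in s(a)$, and $r\in f(-b)$ gives $c,d\in s(b)$.

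\emph{Global $(\fns)\Rightarrow(\fn)$.} Let $s$ witness global $(\fns)$. The elements $0$ and $1$ are handled by hand: put $0$ into every $L$ and $1$ into every $U$, and set $U(0)\ni 0$, $L(1)\ni 1$. For $p\in\pos{\bfA}$ with $p\neq 1$, consider
\[
 T(p)=s(p)\cup s(-p)\cup\{-x:x\in s(p)\cup s(-p)\}\cup\{0,1\}.
\]
Put $U(p)=\{r\in T(p): p\leqslant r\}$ and $L(p)=\{r\in T(p): r\leqslant p\}$.

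Now let $p\leqslant q$. If $p=0$ or $q=1$, the point $0$ or $1$ lies in $U(p)\cap L(q)$. If $p=q$, then $p$ itself is in $T(p)$, provided we also add $p$ to $T(p)$. Otherwise $0<p$, $q<1$, and $p\wedge(-q)=0$ with both elements positive. Apply $(\fns)$ to $a=p$ and $b=-q$ to get $c,d\in s(p)\cap s(-q)$ with $c\wedge d=0$, $p\leqslant c$ and $-q\leqslant d$. Then $p\leqslant c\leqslant -d\leqslant q$. Since $c\in s(p)\subseteq T(p)$ and $c\in s(-q)\subseteq T(q)$, we get $c\in U(p)\cap L(q)$.

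The only delicate point is this bookkeeping: the degenerate cases $p\in\{0,1\}$, $q\in\{0,1\}$ and $p=q$, together with the convention that $s$ only takes positive values. Including $\{0,1,p\}$ and the complements in the finite sets resolves it.
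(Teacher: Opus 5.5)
Your proof is correct and is essentially the paper's argument. The first two steps coincide with it. Your direct route from global $(\fns)$ to $(\fn)$ is the paper's construction $f(x)=\{0,1,x\}\cup s(x)\cup s(-x)$ (with the $s$-terms omitted when undefined), followed by the restriction $U(p)=f(p)\cap\{r:p\leqslant r\}$, $L(p)=f(p)\cap\{r:r\leqslant p\}$. The complements you add to $T(p)$ are harmless but unneeded. Also, the case $p=q$ needs no separate patch: for $0<p<1$, your general case (separating $p$ from $-p$) already forces the witness $c$ to equal $p$.
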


\begin{proof}
First, $(\fn)$ and $(\fni)$ are equivalent for every poset.  If $U,L$ witness $(\fn)$, then $f(p)=U(p)\cup L(p)$ witnesses $(\fni)$.  Conversely, if $f$ witnesses $(\fni)$, put
\[
        U(p)=f(p)\cap\{r:p\leqslant r\},
        \qquad
        L(p)=f(p)\cap\{r:r\leqslant p\}.
\]
Then $U,L$ witness $(\fn)$.

Suppose now that $f\colon\bfA\to[\bfA]^{<\omega}$ witnesses $(\fni)$.  For $a\in\pos{\bfA}$ put
\[
        s(a)=\pos{\bfA}\cap\bigl(f(a)\cup f(-a)
        \cup\{-u:u\in f(a)\cup f(-a)\}\bigr).
\]
Let $a,b\in\pos{\bfA}$ and $a\wedge b=0$.  Applying $(\fni)$ to $a\leqslant -b$, choose $u\in f(a)\cap f(-b)$ with $a\leqslant u\leqslant -b$.  Then $u$ and $-u$ both belong to $s(a)\cap s(b)$, they are disjoint, $a\leqslant u$, and $b\leqslant -u$.  Thus $s$ witnesses global $(\fns)$.

Conversely, suppose that $s\colon\pos{\bfA}\to[\pos{\bfA}]^{<\omega}$ witnesses global $(\fns)$.  Define
\[
        f(x)=\{0,1,x\}\cup
        \begin{cases}
        s(x),& x>0,\\
        \emptyset,& x=0,
        \end{cases}
        \cup
        \begin{cases}
        s(-x),& -x>0,\\
        \emptyset,& x=1 .
        \end{cases}
\]

for $x\in\bfA$.  If $a\leqslant b$ and either $a=0$ or $b=1$, then $0$ or $1$ gives the required interpolation.  Otherwise $a>0$ and $-b>0$.  Since $a\wedge -b=0$, finite separation gives disjoint $u,v\in s(a)\cap s(-b)$ with $a\leqslant u$ and $-b\leqslant v$.  Then $u\leqslant -v\leqslant b$, and $u\in f(a)\cap f(b)$.  Hence $f$ witnesses $(\fni)$.
\end{proof}

The following elementary observation will be used repeatedly without further comment.

\begin{lemma}\label{lem:finite-meet-star}
Let $s$ witness $(\fns)^*$ on $X$.  If $n\geqslant 1$, $x_0,\ldots,x_{n-1}\in X$, and $x=x_0\wedge\cdots\wedge x_{n-1}>0$, then
\[
        s(x)\subseteq \bigl(s(x_0)\cup\cdots\cup s(x_{n-1})\bigr)^{\wedge}.
\]
\end{lemma}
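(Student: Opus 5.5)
We argue by induction on $n$, with \eqref{eq:star} supplying the inductive step, plus two elementary facts about the operation $F\mapsto F^{\wedge}$.

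For $n=1$ the claim reads $s(x_0)\subseteq s(x_0)^{\wedge}$. This holds because every element of $s(x_0)$ is non-zero (as $s(x_0)\subseteq X\subseteq\pos{\bfA}$) and is the meet of the singleton $G=\{y\}$; so $F\subseteq F^{\wedge}$ whenever $F\subseteq\pos{\bfA}$.

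For the inductive step, let $n\geqslant 2$ and put $y=x_0\wedge\cdots\wedge x_{n-2}$.
\begin{enumerate}
\item Since $x=y\wedge x_{n-1}>0$, we have $y>0$.
\item Because $X$ is closed under non-zero meets, a finite induction shows $y\in X$.
\item The induction hypothesis gives
\[
        s(y)\subseteq\bigl(s(x_0)\cup\cdots\cup s(x_{n-2})\bigr)^{\wedge}.
\]
\item Applying \eqref{eq:star} to $y,x_{n-1}\in X$ with $y\wedge x_{n-1}=x>0$ gives $s(x)\subseteq (s(y)\cup s(x_{n-1}))^{\wedge}$.
\end{enumerate}

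It remains to absorb the nested $\wedge$-closure. We need two facts, valid for finite $F,E\subseteq\pos{\bfA}$.
\begin{itemize}
\item Monotonicity: if $E\subseteq F$, then $E^{\wedge}\subseteq F^{\wedge}$. This is immediate from the definition.
\item Idempotence: $(F^{\wedge})^{\wedge}=F^{\wedge}$. A non-zero meet of finitely many elements, each a meet of a non-empty subset of $F$, is itself the meet of the union of those subsets. That union is a non-empty subset of $F$, and the meet is non-zero, so it lies in $F^{\wedge}$.
\end{itemize}

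Now set $F=s(x_0)\cup\cdots\cup s(x_{n-1})$. By step 3 and monotonicity, $s(y)\subseteq F^{\wedge}$. Also $s(x_{n-1})\subseteq F\subseteq F^{\wedge}$. Hence $s(y)\cup s(x_{n-1})\subseteq F^{\wedge}$. Using step 4, monotonicity and idempotence,
\[
        s(x)\subseteq\bigl(s(y)\cup s(x_{n-1})\bigr)^{\wedge}\subseteq (F^{\wedge})^{\wedge}=F^{\wedge},
\]
which completes the induction.

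The only step needing care is the idempotence of $(\cdot)^{\wedge}$, specifically that the zero-exclusion is compatible with iterating. This is harmless: an iterated meet that is non-zero is a non-zero meet of a non-empty subset of $F$, so it is retained in $F^{\wedge}$.
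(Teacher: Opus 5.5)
Your proof is correct and follows the paper's argument: induction on $n$, with \eqref{eq:star} applied to $y=x_0\wedge\cdots\wedge x_{n-2}$ and $x_{n-1}$, and the members of $s(y)$ absorbed into the $\wedge$-closure. You also spell out what the paper leaves implicit, namely that $y\in X$ (so $s(y)$ is defined) and that $(\cdot)^{\wedge}$ is monotone and idempotent.
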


\begin{proof}
This is a direct induction on $n$.  The case $n=1$ is immediate, and the case $n=2$ is \eqref{eq:star}.  If the assertion holds for $n$ and $y=x_0\wedge\cdots\wedge x_{n-1}$, then
\[
 s(y\wedge x_n)\subseteq (s(y)\cup s(x_n))^{\wedge}
       \subseteq \bigl(s(x_0)\cup\cdots\cup s(x_n)\bigr)^{\wedge},
\]
because every member of $s(y)$ is already a non-zero finite meet of members of $s(x_0)\cup\cdots\cup s(x_{n-1})$.
\end{proof}

We shall use standard terminology for regular and relatively complete subalgebras; see Koppelberg \cite{KoppelbergVol1} or Heindorf--Shapiro \cite{HeindorfShapiro}.  A \emph{partition} is a maximal antichain.  A subalgebra $\bfA$ of $\bfB$ is \emph{regular}, written $\bfA\reg\bfB$, if every partition of $\bfA$ is a partition of $\bfB$.  It is \emph{relatively complete}, written $\bfA\rc\bfB$, if for every $b\in\bfB$ there is a least member of $\bfA$ above $b$.  When it exists, this least member is denoted by
\[
        q_{\bfA}^{\bfB}(b)=\bigwedge_{\bfA}\{a\in\bfA: b\leqslant a\}
\]
and we say that $b$ is \emph{$\bfA$-regular}.
We suppress the superscripts when no confusion is possible. The following properties of the map $q$ are well known, see
\cite{BlaszczykKucharskiTurek}, \cite{KoppelbergProjective}.

\begin{lemma}\label{lem:q-basic}
Let $\bfA\leqslant\bfB$ and $b,c\in\bfB$.  Suppose that $q(b)$ and $q(c)$ exist.
\begin{enumerate}
\item If $a\in\bfA$, then $q(a\wedge b)$ exists and equals $a\wedge q(b)$.
\item The element $q(b\vee c)$ exists and equals $q(b)\vee q(c)$.
\item If $0<a\leqslant q(b)$ and $a\in\bfA$, then $a\wedge b>0$.
\end{enumerate}
\end{lemma}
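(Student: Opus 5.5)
The three assertions of Lemma~\ref{lem:q-basic} follow directly from the definition of $q$ as the least member of $\bfA$ above a given element; we treat them in order.

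For (1), fix $a\in\bfA$ and put $e=a\wedge q(b)$, which lies in $\bfA$. Since $b\leqslant q(b)$, we have $a\wedge b\leqslant e$. For minimality, let $a'\in\bfA$ with $a\wedge b\leqslant a'$. Then
\[
        b\leqslant a'\vee -a ,
\]
and $a'\vee -a\in\bfA$, so minimality of $q(b)$ gives $q(b)\leqslant a'\vee -a$. Meeting both sides with $a$ yields $e=a\wedge q(b)\leqslant a\wedge a'\leqslant a'$. Hence $e$ is the least member of $\bfA$ above $a\wedge b$; that is, $q(a\wedge b)$ exists and equals $a\wedge q(b)$.

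For (2), the element $q(b)\vee q(c)$ lies in $\bfA$ and lies above $b\vee c$. If $a'\in\bfA$ satisfies $b\vee c\leqslant a'$, then $b\leqslant a'$ and $c\leqslant a'$. Minimality of $q(b)$ and of $q(c)$ gives $q(b)\leqslant a'$ and $q(c)\leqslant a'$, hence $q(b)\vee q(c)\leqslant a'$. So $q(b\vee c)$ exists and equals $q(b)\vee q(c)$.

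For (3), we argue by contradiction. Suppose $a\in\bfA$, $0<a\leqslant q(b)$, and $a\wedge b=0$. Then $b\leqslant -a$, and so $b\leqslant q(b)\wedge -a$, an element of $\bfA$. Minimality of $q(b)$ gives $q(b)\leqslant q(b)\wedge -a$, so $q(b)\wedge a=0$. Since $a\leqslant q(b)$, this means $a=0$, a contradiction.

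None of these steps is difficult. The one point needing care is (1): the competitor $a'$ must first be enlarged to $a'\vee -a$ before minimality of $q(b)$ can be invoked, because $b$ itself need not lie below $a'$. Note also that (1) and (3) use only the hypothesis that $q(b)$ exists, whereas (2) needs both $q(b)$ and $q(c)$.
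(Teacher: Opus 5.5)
Your proposal is correct and follows essentially the same argument as the paper: in (1) you enlarge the competitor to $a'\vee -a$ and then meet with $a$, in (2) you use minimality of $q(b)$ and $q(c)$ separately, and in (3) you note that $q(b)\wedge -a$ would be a smaller cover of $b$. The only difference is that in (3) you spell out why $q(b)\wedge -a<q(b)$, which the paper leaves implicit.
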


\begin{proof}
For (1), $a\wedge q(b)$ is an element of $\bfA$ above $a\wedge b$.  If $d\in\bfA$ and $a\wedge b\leqslant d$, then
$b\leqslant -a\vee d$, hence $q(b)\leqslant -a\vee d$, and so $a\wedge q(b)\leqslant d$.  This proves (1).  For (2), $q(b)\vee q(c)$ is plainly above $b\vee c$ and is below every member of $\bfA$ above $b\vee c$.  Finally, if $0<a\leqslant q(b)$ and $a\wedge b=0$, then $b\leqslant q(b)\wedge -a<q(b)$, contradicting the minimality of $q(b)$.
\end{proof}

\begin{lemma}\label{lem:rc-composition}
Let $\bfA\rc\bfB\rc\bfC$.  Then $\bfA\rc\bfC$ and
\[
        q_{\bfA}^{\bfC}=q_{\bfA}^{\bfB}\circ q_{\bfB}^{\bfC}.
\]
Consequently, in a continuous chain in which every successor inclusion is relatively complete, every earlier algebra is relatively complete in every later one; if $\gamma\leqslant\beta\leqslant\alpha$, then
\[
        q_\gamma^\alpha=q_\gamma^\beta\circ q_\beta^\alpha.
\]
In particular, if $b\in\bfA_\beta$ and $\beta\leqslant\eta\leqslant\alpha$, then $q_\eta^\alpha(b)=b$.
\end{lemma}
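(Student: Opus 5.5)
We prove the two assertions in order: first that $\bfA\rc\bfC$ with the stated composition formula, then the consequences for continuous chains.

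\emph{Composition.} Fix $c\in\bfC$. Since $\bfB\rc\bfC$, the element $b:=q_{\bfB}^{\bfC}(c)$ exists. Since $\bfA\rc\bfB$, the element $a:=q_{\bfA}^{\bfB}(b)$ exists. We claim that $a$ is the least member of $\bfA$ above $c$.
\begin{itemize}
\item[] It lies above $c$, because $c\leqslant b\leqslant a$.
\item[] Let $d\in\bfA$ with $c\leqslant d$. Then $d\in\bfB$ (as $\bfA\leqslant\bfB$), so minimality of $b$ in $\bfB$ gives $b\leqslant d$. Next, minimality of $a$ in $\bfA$ gives $a\leqslant d$.
\end{itemize}
Hence $q_{\bfA}^{\bfC}(c)$ exists and equals $q_{\bfA}^{\bfB}(q_{\bfB}^{\bfC}(c))$. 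In particular $\bfA\rc\bfC$. This part is purely order-theoretic and needs only the fact that the intermediate algebra contains the smaller one.

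\emph{Chains.} Let $(\bfA_\alpha)$ be a continuous chain with relatively complete successor inclusions. We fix $\gamma$ and prove $\bfA_\gamma\rc\bfA_\alpha$ by induction on $\alpha\geqslant\gamma$.
\begin{itemize}
\item[] The case $\alpha=\gamma$ is trivial, with the identity map as $q$.
\item[] The successor step $\alpha\to\alpha+1$ follows from the composition result above.
\item[] For limit $\alpha$ we have $\bfA_\alpha=\bigcup_{\beta<\alpha}\bfA_\beta$ by continuity. So any $c\in\bfA_\alpha$ lies in some $\bfA_\beta$ with $\gamma\leqslant\beta<\alpha$. Put $a=q_\gamma^\beta(c)$, which exists by the induction hypothesis. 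If $d\in\bfA_\gamma$ satisfies $c\leqslant d$, then $a\leqslant d$ by minimality within $\bfA_\beta$, since $d\in\bfA_\beta$. So $a$ is least in $\bfA_\gamma$ above $c$ also when computed in $\bfA_\alpha$.
\end{itemize}
The limit step is the only point requiring care. The least upper bound in $\bfA_\gamma$ of an element depends only on the element and on $\bfA_\gamma$, not on the ambient algebra, so no extra compatibility is needed.

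\emph{Consequences.} Once $\bfA_\gamma\rc\bfA_\beta\rc\bfA_\alpha$ is known for all $\gamma\leqslant\beta\leqslant\alpha$, the composition result yields $q_\gamma^\alpha=q_\gamma^\beta\circ q_\beta^\alpha$.

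For the final clause, let $b\in\bfA_\beta\subseteq\bfA_\eta$ with $\beta\leqslant\eta\leqslant\alpha$. Then $b$ is itself the least element of $\bfA_\eta$ above $b$, so $q_\eta^\alpha(b)=b$.
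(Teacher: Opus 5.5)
Your proof is correct and follows the paper's argument essentially step for step. The composition is verified by the same two-step minimality argument, the chain statement by induction on the later ordinal using continuity at limit stages to place the element in an earlier algebra, and the final clause directly from the definition of the least cover.
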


\begin{proof}
For $c\in\bfC$, the element $q_{\bfA}^{\bfB}(q_{\bfB}^{\bfC}(c))$ belongs to $\bfA$ and is above $c$.  If $a\in\bfA$ and $c\leqslant a$, then $q_{\bfB}^{\bfC}(c)\leqslant a$, whence $q_{\bfA}^{\bfB}(q_{\bfB}^{\bfC}(c))\leqslant a$.  This proves both relative completeness and the displayed identity.  The assertion for continuous chains follows by induction on the later ordinal; at a limit stage the element under consideration already belongs to some earlier member of the continuous union.  The final statement is immediate from the definition of least cover.
\end{proof}

\begin{lemma}\label{lem:regular-criterion}
For $\bfA\leqslant\bfB$, the following are equivalent.
\begin{enumerate}
\item $\bfA\reg\bfB$.
\item For every $b\in\pos{\bfB}$ there is $a\in\pos{\bfA}$ such that every $x\in\pos{\bfA}$ with $x\leqslant a$ satisfies $x\wedge b>0$.
\end{enumerate}
\end{lemma}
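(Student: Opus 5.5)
We prove the two implications of Lemma~\ref{lem:regular-criterion} separately, each by contraposition, with the elements of $\pos{\bfA}$ disjoint from $b$ as the central object.

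\emph{(1) implies (2).}  Assume (2) fails for some $b\in\pos{\bfB}$.  Then every $a\in\pos{\bfA}$ has some $x\in\pos{\bfA}$ with $x\leqslant a$ and $x\wedge b=0$.  Thus the set
\[
        D=\{x\in\pos{\bfA}: x\wedge b=0\}
\]
is dense in $\pos{\bfA}$.  By Zorn's lemma, choose a maximal antichain $P\subseteq D$ among pairwise disjoint subsets of $D$.

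We claim that $P$ is a partition of $\bfA$.  Suppose $a\in\pos{\bfA}$ is disjoint from every member of $P$.  Density gives $x\in D$ with $x\leqslant a$, and then $P\cup\{x\}$ is a larger antichain in $D$, contradicting maximality.  So $P$ is a partition of $\bfA$.

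On the other hand, $b>0$ is disjoint from every member of $P$, so $P$ is not maximal in $\bfB$.  Hence $P$ is not a partition of $\bfB$, and $\bfA\reg\bfB$ fails.

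\emph{(2) implies (1).}  Let $P$ be a partition of $\bfA$, and suppose $P$ is not maximal in $\bfB$.  Then some $b\in\pos{\bfB}$ satisfies $b\wedge p=0$ for all $p\in P$.  Apply (2) to this $b$ to obtain $a\in\pos{\bfA}$ such that every positive $x\in\bfA$ below $a$ meets $b$.

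Since $P$ is maximal in $\bfA$, there is $p\in P$ with $x=a\wedge p>0$.  This $x$ lies in $\pos{\bfA}$ and $x\leqslant a$, so $x\wedge b>0$ by the choice of $a$.  But $x\wedge b\leqslant p\wedge b=0$, a contradiction.  Hence every partition of $\bfA$ remains a partition of $\bfB$, which is $\bfA\reg\bfB$.

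\emph{Main obstacle.}  The only point needing care is that maximality in the first direction is taken inside $D$, not inside $\pos{\bfA}$.  Density of $D$ is exactly what upgrades this to maximality in $\bfA$.  Everything else is immediate from the definitions.
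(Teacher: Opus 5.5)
Your proof is correct and takes the same route as the paper's proof: both directions by contraposition. For (1)$\Rightarrow$(2) you take a maximal antichain inside the dense set of members of $\pos{\bfA}$ disjoint from $b$. For the converse, the element $b$ disjoint from a non-maximal partition witnesses the failure of (2). You simply supply the Zorn's lemma and density details that the paper's brief sketch leaves implicit.
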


\begin{proof}
This is the standard partition criterion for regular subalgebras; see, for instance, \cite[Proposition 2.1]{BlaszczykKucharskiTurek} or \cite[Section 1]{KoppelbergVol1}.  Indeed, failure of (2) gives a maximal antichain in $\bfA$ disjoint from $b$, hence a partition of $\bfA$ which is not maximal in $\bfB$; conversely, if a partition of $\bfA$ is not maximal in $\bfB$, a non-zero element of $\bfB$ disjoint from all its members witnesses the failure of (2).
\end{proof}

The next lemma is the main mechanism by which finite separation maps produce regular or relatively complete subalgebras.

\begin{lemma}\label{lem:closed-subalgebra}
Let $X\subseteq\pos{\bfB}$ be a $\pi$-base closed under non-zero meets, and let $s\colon X\to[X]^{<\omega}$ be a finite-separation map.  Let $\bfA\leqslant\bfB$.  Suppose that there is a $\pi$-base $P\subseteq X\cap\bfA$ of $\bfA$ such that $s(p)\subseteq\bfA$ for every $p\in P$.  Then $\bfA\reg\bfB$.

If, in addition, $X$ is a base of $\bfB$ and $P$ is a base of $\bfA$, then $\bfA\rc\bfB$.
\end{lemma}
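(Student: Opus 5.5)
We verify criterion (2) of Lemma~\ref{lem:regular-criterion} for regularity, and for relative completeness we compute least covers explicitly. Both parts rest on one finite element of $\bfA$, built from $s(x)$ for a fixed $x\in X$. For $x\in X$ let
\[
 D_x=\{c\in s(x)\cap\bfA:\ \text{there is } d\in s(x) \text{ with } x\leqslant d \text{ and } c\wedge d=0\},
 \qquad e_x=\textstyle\bigvee D_x .
\]
Since $s(x)$ is finite, $D_x$ is a finite subset of $\bfA$, so $e_x\in\bfA$. Each $c\in D_x$ is disjoint from some $d\geqslant x$, so $c\wedge x=0$. Hence $e_x\wedge x=0$. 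As $x>0$, this gives $e_x\neq 1$, so $-e_x\in\pos{\bfA}$ and $x\leqslant -e_x$.

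\emph{Key claim.} If $p\in P$ and $p\wedge x=0$, then $p\leqslant e_x$. Both $p$ and $x$ lie in $X$ and are disjoint, so the finite-separation property gives $c,d\in s(p)\cap s(x)$ with $c\wedge d=0$, $p\leqslant c$ and $x\leqslant d$. By hypothesis $s(p)\subseteq\bfA$, so $c\in s(x)\cap\bfA$. Together with $d$, this shows $c\in D_x$, and therefore $p\leqslant c\leqslant e_x$.

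\emph{Regularity.} Let $b\in\pos{\bfB}$. Since $X$ is a $\pi$-base, choose $x\in X$ with $x\leqslant b$, and put $a=-e_x\in\pos{\bfA}$. Suppose some $y\in\pos{\bfA}$ with $y\leqslant a$ had $y\wedge b=0$. Since $P$ is a $\pi$-base of $\bfA$, pick $p\in P$ with $p\leqslant y$. Then $p\wedge x\leqslant y\wedge b=0$, so the claim gives $p\leqslant e_x$. But also $p\leqslant y\leqslant -e_x$, forcing $p=0$, which is impossible. So $a$ witnesses Lemma~\ref{lem:regular-criterion}(2), and $\bfA\reg\bfB$.

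\emph{Relative completeness.} Assume now that $X$ is a base of $\bfB$ and $P$ is a base of $\bfA$. The element $0$ has least cover $0$. Any $b\in\pos{\bfB}$ is a finite join of members of $X$, so by Lemma~\ref{lem:q-basic}(2) it suffices to show that $q(x)$ exists for each $x\in X$. I claim that $q(x)=-e_x$.

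We already know $x\leqslant -e_x$. Let $a\in\bfA$ with $x\leqslant a$, and suppose $-a\wedge -e_x>0$. This element of $\pos{\bfA}$ is a join of members of $P$, so some $p\in P$ satisfies $p\leqslant -a\wedge -e_x$. Then $p\wedge x\leqslant -a\wedge a=0$, so the claim gives $p\leqslant e_x$. Combined with $p\leqslant -e_x$, this forces $p=0$, a contradiction. Hence $-e_x\leqslant a$, so $-e_x$ is the least member of $\bfA$ above $x$.

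The main step is choosing $e_x$ so that it is determined by $x$ alone, via the finite set $s(x)$, yet dominates every $p\in P$ disjoint from $x$. This is exactly where the requirement $s(p)\subseteq\bfA$ for $p\in P$ enters: it forces the separating element $c$ to lie in $\bfA$. Coherence \eqref{eq:star} is not needed for this lemma.
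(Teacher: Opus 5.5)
Your proof is correct and follows essentially the paper's route. The paper computes the least cover of $x\in X$ as $q_X(x)=\bigwedge\{c\in s(x)\cap\bfA: x\leqslant c\}$, using the separating pieces on $x$'s side, rather than as your $-e_x$, which uses the pieces on $p$'s side; it then reaches the same contradiction from finite separation with some $p\in P$ and $s(p)\subseteq\bfA$, deduces regularity from the least-cover claim via Lemma~\ref{lem:q-basic}(3) where you check Lemma~\ref{lem:regular-criterion} directly, and, as in your version, uses only the $\pi$-base property of $P$.
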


\begin{proof}
First fix $b\in X$.  Put
\[
        q_X(b)=\begin{cases}
        \bigwedge\{c\in s(b)\cap\bfA: b\leqslant c\},&\text{if this set is non-empty},\\
        1,&\text{otherwise.}
        \end{cases}
\]
Then $q_X(b)\in\bfA$ and $b\leqslant q_X(b)$.  We claim that $q_X(b)$ is the least member of $\bfA$ above $b$.
Let $a\in\bfA$ and $b\leqslant a$.  If $q_X(b)\wedge -a>0$, choose $p\in P$ such that $p\leqslant q_X(b)\wedge -a$.  Then $p\wedge b=0$, so finite separation gives disjoint $u,v\in s(p)\cap s(b)$ with $p\leqslant u$ and $b\leqslant v$.  Since $p\in P$, $s(p)\subseteq\bfA$, hence $v\in s(b)\cap\bfA$ and $b\leqslant v$.  Thus the finite set used in the first case of the definition of $q_X(b)$ is non-empty.  We are therefore in that first case, and $q_X(b)\leqslant v$.  Hence $p\leqslant q_X(b)\leqslant v$, contradicting $u\wedge v=0$ and $0<p\leqslant u$.  Thus $q_X(b)\leqslant a$, proving the claim.

To prove regularity, take any $b\in\pos{\bfB}$ and choose $x\in X$ with $x\leqslant b$.  The claim gives $q_X(x)\in\pos{\bfA}$ above $x$.  If $0<a\leqslant q_X(x)$ and $a\in\bfA$, then $a\wedge x>0$ by Lemma~\ref{lem:q-basic}(3), and hence $a\wedge b>0$.  Lemma~\ref{lem:regular-criterion} gives $\bfA\reg\bfB$.

Assume now that $X$ and $P$ are bases.  We have proved that every member of $X$ has a least cover in $\bfA$.  If $b\in\pos{\bfB}$, write $b=x_0\vee\cdots\vee x_{n-1}$ as a finite disjoint join of members of $X$ and put
\[
        q_{\bfA}^{\bfB}(b)=q_X(x_0)\vee\cdots\vee q_X(x_{n-1}),
        \qquad q_{\bfA}^{\bfB}(0)=0.
\]
This does not depend on the chosen decomposition: if $a\in\bfA$ and $b\leqslant a$, then $x_i\leqslant a$ for every $i<n$, so $q_X(x_i)\leqslant a$ for every $i<n$.  Hence the displayed join is the least member of $\bfA$ above $b$.  Thus the genuine projection $q_{\bfA}^{\bfB}$ exists on all of $\bfB$, and $\bfA\rc\bfB$.
\end{proof}

We shall also use two standard structural characterisations.  We spell out the terminology because the proofs below use the exact forms, not only the informal consequences.

\begin{definition}\label{def:skeleton-terminology}
Two Boolean algebras are \emph{co-complete} if their Boolean completions are isomorphic.  A subalgebra $\bfD\leqslant\bfA$ is \emph{dense} if every non-zero member of $\bfA$ contains a non-zero member of $\bfD$.  A set $\calC$ of countable subalgebras of $\bfA$ is \emph{closed unbounded}, or a \emph{club}, if it is cofinal in $[\bfA]^\omega$ and is closed under unions of increasing countable chains.

We reserve \emph{additive relatively complete skeleton} for the standard four-axiom notion of Heindorf--Shapiro \cite[Section~1.3]{HeindorfShapiro}.  Thus it is a family $\calS$ of subalgebras of $\bfA$ such that:
\begin{enumerate}
\item the union of every chain in $\calS$ belongs to $\calS$;
\item every subalgebra $\bfC\leqslant\bfA$ is contained in some $\bfD\in\calS$ with $|\bfC|=|\bfD|$;
\item every $\bfD\in\calS$ satisfies $\bfD\rc\bfA$;
\item $\langle\bigcup\calT\rangle\in\calS$ for every subfamily $\calT\subseteq\calS$.
\end{enumerate}
Replacing $\rc$ by $\reg$ gives the standard analogous notion of an \emph{additive regular skeleton}.

For the streamlined formulations used in Propositions~\ref{prop:base-to-projective} and~\ref{prop:pibase-to-weak}, we introduce separate terminology.  A \emph{countable rc-generating family} for $\bfA$ is a family $\calR$ of countable subalgebras such that $\bfA=\bigcup\calR$ and
\[
        \Bigl\langle\bigcup\calC\Bigr\rangle\rc\bfA
\]
for every subfamily $\calC\subseteq\calR$.  Replacing $\rc$ by $\reg$ gives a \emph{countable regular-generating family}.  These are equivalent countable generating-family formulations, not our definition of a skeleton.  A Boolean algebra with a club of countable regular subalgebras is called \emph{regularly filtered}; the corresponding relatively complete notion is \emph{rc-filtered}.
\end{definition}

\begin{theorem}[Koppelberg, Shchepin, Haydon]\label{thm:projective-known}
For a Boolean algebra $\bfA$, the following are equivalent.
\begin{enumerate}
\item $\bfA$ is projective.
\item $\bfA$ is a retract of a free Boolean algebra.
\item There are an ordinal $\kappa$ and a continuous chain $(\bfA_\alpha)_{\alpha\leqslant\kappa}$ of subalgebras such that
\[
        \bfA=\bfA_\kappa,\qquad \bfA_0=\{0,1\},
\]
and, for every $\alpha<\kappa$, $\bfA_\alpha\rc\bfA_{\alpha+1}$ and $\bfA_{\alpha+1}=\bfA_\alpha(e_\alpha)$ for some $e_\alpha\in\bfA_{\alpha+1}$.
\item There are an ordinal $\kappa$ and a continuous chain $(\bfA_\alpha)_{\alpha\leqslant\kappa}$ of subalgebras such that $\bfA=\bfA_\kappa$, $\bfA_0$ is countable, and, for every $\alpha<\kappa$, $\bfA_\alpha\rc\bfA_{\alpha+1}$ and $\bfA_{\alpha+1}$ is countably generated over $\bfA_\alpha$.
\item $\bfA$ has a countable rc-generating family.
\item $\bfA$ has an additive relatively complete skeleton in the standard sense of Definition~\ref{def:skeleton-terminology}.
\end{enumerate}
\end{theorem}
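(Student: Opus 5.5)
This theorem collects classical results (Koppelberg, Shchepin, Haydon), so the plan is to assemble the cycle of implications from the literature, supplying short arguments only for the bridges. The order is
\[
(1)\Leftrightarrow(2),\quad (2)\Rightarrow(3)\Rightarrow(4)\Rightarrow(1),\quad (5)\Leftrightarrow(6)\Leftrightarrow(1).
\]

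The equivalence (1)$\Leftrightarrow$(2) is categorical. Free Boolean algebras are projective, and a retract of a projective algebra is projective: compose the lifting for the free algebra with the retraction. Conversely, every $\bfA$ is a quotient $\pi\colon\Fr(\kappa)\twoheadrightarrow\bfA$. Projectivity of $\bfA$, applied to $\pi$ and $\mathrm{id}_{\bfA}$, gives a section $k$ with $\pi\circ k=\mathrm{id}$. Hence $k$ embeds $\bfA$ as a retract of $\Fr(\kappa)$.

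The implication (3)$\Rightarrow$(4) is trivial, since a simple extension is countably generated.

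For (4)$\Rightarrow$(1), use transfinite induction on the chain. Given $g\colon\bfC\twoheadrightarrow\bfB$ and $h\colon\bfA\to\bfB$, build liftings $k_\alpha\colon\bfA_\alpha\to\bfC$ compatibly.
\begin{itemize}
\item At the base, $k_0$ exists by Halmos' theorem, since $\bfA_0$ is countable.
\item At limit stages, take unions.
\item At a successor stage, the key fact is Koppelberg's lemma: if $\bfA_\alpha\rc\bfA_{\alpha+1}$ and $\bfA_{\alpha+1}$ is countably generated over $\bfA_\alpha$, then any lifting on $\bfA_\alpha$ extends to $\bfA_{\alpha+1}$. To prove it, first reduce to one generator $e$ at a time, enumerating the countably many generators and interleaving with relative completeness. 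For a single $e$, we need $c\in\bfC$ with $g(c)=h(e)$ that respects the constraints coming from $\bfA_\alpha$. Relative completeness gives the constraints $k_\alpha(-q(-e))\leqslant c\leqslant k_\alpha(q(e))$. These are consistent because $-q(-e)\leqslant e\leqslant q(e)$. One then adjusts an arbitrary preimage of $h(e)$ into this interval.
\end{itemize}
The subtle point is that with countably many generators, relative completeness over $\bfA_\alpha$ does not immediately give relative completeness of the intermediate algebras. This is the main obstacle, and it is handled by Koppelberg's argument that the countably generated rc extension is projective relative to the base.

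For (1)$\Rightarrow$(3), realise $\bfA$ as a retract of $\Fr(\kappa)$ using (2). By Shchepin/Haydon, this yields an additive rc skeleton, which is (6). From (6), well-order $\bfA$, enumerate it, and take the skeleton members generated by initial segments and one element at a time; closure under joins (axiom (4)) guarantees that the chain stays inside the skeleton. This gives (3), and also the countable form (5), by taking countable members of the skeleton covering $\bfA$. Conversely, (5)$\Rightarrow$(4) is obtained by enumerating the countable family $\calR$ and setting $\bfA_\alpha=\langle\bigcup_{\beta<\alpha}\bfR_\beta\rangle$. Each $\bfA_\alpha$ is relatively complete in $\bfA$ by hypothesis. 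Hence $\bfA_\alpha\rc\bfA_{\alpha+1}$, since a relatively complete subalgebra of $\bfA$ is relatively complete in any intermediate algebra. The successor extension is countably generated, and the chain is continuous.

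The hardest step is (1)$\Rightarrow$(6), the construction of the skeleton from a retraction of a free algebra. This is Shchepin's spectral theorem in Haydon's form, and I would cite it rather than reprove it, as the paper does (Heindorf--Shapiro, Section~1.3). If it had to be proved, I would close a subalgebra under finitely many supports of the retraction $r$ and embedding $k$ via an elementary submodel. I would then check that $\bfA\cap\Fr(I)$ is relatively complete for $r$-closed $I\subseteq\kappa$, using that subalgebras of the form $\Fr(I)$ are relatively complete in $\Fr(\kappa)$, together with $r$.
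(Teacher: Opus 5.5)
\textbf{Gap: nothing in your plan actually establishes (3).} Most of your bridges are correct. This includes the categorical argument for (1)$\Leftrightarrow$(2), the single-generator lifting step (one can take $c=(c_0\wedge k_\alpha(q(e)))\vee k_\alpha(-q(-e))$), (6)$\Rightarrow$(5) via axiom (4), and (5)$\Rightarrow$(4) by enumerating $\calR$. The paper itself gives no argument and only cites Koppelberg, Haydon and Heindorf--Shapiro.

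From (6) you form skeleton members $S_\alpha=\langle\bigcup_{\xi<\alpha}D_\xi\rangle$ with $D_\xi\in\calS$ countable. Axiom (4) makes each $S_\alpha$ relatively complete. However, $S_{\alpha+1}$ is only countably generated over $S_\alpha$, and that is condition (4) of the theorem, not (3). To get simple extensions you must pass through the algebras $S_\alpha(d_0,\ldots,d_{n-1})$. These are not skeleton members, so closure under generated unions says nothing about them. Indeed, in your (4)$\Rightarrow$(1) paragraph you call the relative completeness of exactly such intermediate algebras the ``main obstacle''. As written, then, your claim ``this gives (3)'' is unsupported.

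\textbf{The missing fact.} It is elementary and dissolves that obstacle: if $S\rc B$ and $e\in B$, then $S(e)\rc B$. An element $(e\wedge x)\vee(-e\wedge y)$ with $x,y\in S$ lies above $b$ if and only if $x\geqslant q_S(e\wedge b)$ and $y\geqslant q_S(-e\wedge b)$. Hence $(e\wedge q_S(e\wedge b))\vee(-e\wedge q_S(-e\wedge b))$ is the least cover of $b$ in $S(e)$.

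So if $\bfA_\alpha\rc\bfA_{\alpha+1}=\bfA_\alpha(e_0,e_1,\ldots)$, induction shows that every $\bfA_\alpha(e_0,\ldots,e_{n-1})$ is relatively complete in $\bfA_{\alpha+1}$. These algebras therefore form an $\omega$-chain of simple relatively complete extensions with union $\bfA_{\alpha+1}$. Treat $\{0,1\}\leqslant\bfA_0$ the same way, using that finite subalgebras are always relatively complete. This gives (4)$\Rightarrow$(3) directly. Then (3)$\Rightarrow$(1) is just your single-generator step with unions at limits, so no separate countably-generated lifting lemma needs to be cited.

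\textbf{A caveat on (6).} Your citation yields (6) only with the standard cardinality clause $|\bfD|\leqslant|\bfC|+\aleph_0$. Read literally, $|\bfC|=|\bfD|$ forces every finite subalgebra into $\calS$. By axiom (4) every subalgebra is then in $\calS$, hence relatively complete. That fails already for countable projective algebras with non-relatively-complete subalgebras, such as the interval algebra of $\mathbb{Q}\cap[0,1)$.
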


\begin{proof}
The retract formulation and the simple extension filtration are given in Koppelberg \cite[Chapter~20, in particular Theorem~2.8]{KoppelbergProjective}.  The countably generated filtration is Haydon's formulation \cite[pp.~23--31]{Haydon}.  The additive-skeleton and countable rc-generating-family formulations, together with their equivalence to projectivity, are recorded in Heindorf--Shapiro \cite[Theorem~1.3.2]{HeindorfShapiro}; the skeleton formulation originates with Shchepin \cite{Shchepin}.  These references use the same arbitrary-subfamily generated-union condition as in Definition~\ref{def:skeleton-terminology} and condition~(5).
\end{proof}

\begin{theorem}[Shapiro, Balcar--Jech--Zapletal, Koppelberg--Bandlow]\label{thm:weak-known}
For a Boolean algebra $\bfA$, the following are equivalent.
\begin{enumerate}
\item $\bfA$ is weakly projective, that is, co-complete with a projective Boolean algebra.
\item $\bfA$ has a dense projective subalgebra.
\item $\bfA$ has a countable regular-generating family.
\item $\bfA$ has a club $\calC$ of countable regular subalgebras such that $\langle\bfC_0\cup\bfC_1\rangle\in\calC$ whenever $\bfC_0,\bfC_1\in\calC$.
\item $\bfA$ has an additive regular skeleton in the standard sense of Definition~\ref{def:skeleton-terminology}.
\end{enumerate}
\end{theorem}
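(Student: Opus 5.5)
The theorem collects known equivalences, so my plan is to organise the proof as a cycle $(1)\Leftrightarrow(2)\Rightarrow(3)\Rightarrow(4)\Rightarrow(5)\Rightarrow(2)$. I would prove directly the links that are formal consequences of Theorem~\ref{thm:projective-known} and of density, and cite Shapiro, Balcar--Jech--Zapletal and Koppelberg--Bandlow for the two genuinely deep steps.

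For $(2)\Rightarrow(1)$: a dense subalgebra has the same completion, so $\bfA$ is co-complete with the projective algebra $\bfD$. For $(1)\Rightarrow(2)$ I would cite Koppelberg--Bandlow and Shapiro. The point is to show that if $\operatorname{RO}$ of $\bfA$ is the completion of a projective $\bfP$, then a dense projective subalgebra can be found inside $\bfA$ itself. This is not formal, since $\bfP$ need not embed densely in $\bfA$, and I would not attempt to reprove it.

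For $(2)\Rightarrow(3)$, let $\bfD\leqslant\bfA$ be dense and projective, and let $\calR_{\bfD}$ be a countable rc-generating family for $\bfD$ (Theorem~\ref{thm:projective-known}(5)). Fix a large $H(\theta)$ and let $\calR$ consist of all $\bfA\cap M$, where $M$ is a countable elementary submodel containing $\bfA,\bfD,\calR_{\bfD}$. Then $\bigcup\calR=\bfA$. By elementarity, $\bfD\cap M$ is dense in $\bfA\cap M$ and is generated by members of $\calR_{\bfD}$ lying in $M$. Given any subfamily $\{\bfA\cap M_i\}$, the algebra $\bfE=\langle\bigcup_i\bfA\cap M_i\rangle$ has the dense subalgebra $\bfE_0=\langle\bigcup_i \bfD\cap M_i\rangle$, and $\bfE_0\rc\bfD$. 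I would then use three elementary facts, each checked with Lemma~\ref{lem:regular-criterion}:
\begin{enumerate}
\item relative completeness implies regularity, so $\bfE_0\reg\bfD$;
\item regularity in a dense subalgebra lifts to the ambient algebra, so $\bfE_0\reg\bfA$;
\item an algebra with a dense regular subalgebra is itself regular, so $\bfE\reg\bfA$.
\end{enumerate}
The one point needing care is density of $\bfE_0$ in $\bfE$, because $\bfE$ contains Boolean combinations across different $M_i$. I would handle this by passing to suitable generated subalgebras before intersecting.

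For $(3)\Rightarrow(4)$, take $\calC=\{\langle\bigcup\calC_0\rangle:\calC_0\in[\calR]^{\leqslant\omega}\}$. Its members are countable and regular by hypothesis. The family is closed under pairwise generated unions and under countable increasing unions, and it is cofinal because $\bigcup\calR=\bfA$. For $(4)\Rightarrow(5)$, close $\calC$ under generated unions of arbitrary subfamilies and under unions of chains. The task is then to verify that generated unions of arbitrarily many regular countable members remain regular. Following Heindorf--Shapiro, I would show that regularity of $\langle\bigcup\calT\rangle$ is a finitary condition, witnessed inside finitely generated pieces, together with directedness of the pairwise-closed club.

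The main obstacle is $(5)\Rightarrow(2)$ (or $(5)\Rightarrow(1)$): building a projective dense subalgebra from a regular skeleton. My first attempt would be a transfinite induction along a continuous chain of skeleton members. At each step, a countable regular extension would be replaced, up to completion, by a countable relatively complete one, using that every countable algebra is projective and that countable regular subalgebras of a completion admit Cohen-like rc approximations. This is exactly the Balcar--Jech--Zapletal argument, and I would cite it rather than reproduce it.
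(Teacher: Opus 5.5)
Your cycle is a sound way to organise the statement, and for the deep implications you do what the paper does. The paper's proof consists solely of citations: Shapiro's theorem and Heindorf--Shapiro, Theorems~5.3.1 and~5.3.9. Your direct arguments for $(2)\Rightarrow(1)$ and $(3)\Rightarrow(4)$ are correct.

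\textbf{The gap is in $(4)\Rightarrow(5)$.} You propose to get regularity of $\langle\bigcup\calT\rangle$ from two ingredients: that regularity is ``a finitary condition, witnessed inside finitely generated pieces'', and that $\{\langle\bigcup\calT_0\rangle:\calT_0\in[\calT]^{<\omega}\}\subseteq\calC$ is directed. That principle is false, because an increasing union of regular subalgebras need not be regular. In $\mathcal P(\omega\cup\{*\})$, let $\bfB_n$ be the finite subalgebra generated by $\{0\},\ldots,\{n-1\}$. Each $\bfB_n$ is regular, being finite. However, $\{\{k\}:k<\omega\}$ is a partition of $\bigcup_n\bfB_n$ which is not maximal in the whole algebra, since $\{*\}$ is disjoint from all its members. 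So finitary witnessing plus directedness cannot yield this step.

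\textbf{How to repair it.} The argument that works is a countable reflection, and it uses the closure of $\calC$ under countable increasing unions, which your sketch never invokes.

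\begin{itemize}
\item Suppose $\bfB=\langle\bigcup\calT\rangle$ fails the criterion of Lemma~\ref{lem:regular-criterion} at some $a\in\pos{\bfA}$.
\item For each $b\in\pos{\bfB}$ choose $x_b\in\pos{\bfB}$ with $x_b\leqslant b$ and $x_b\wedge a=0$. Each $x_b$ lies in the generated union of finitely many members of $\calT$.
\item Closing off in $\omega$ steps gives a countable $\calT'\subseteq\calT$ whose generated union $\bfB'$ is closed under $b\mapsto x_b$. Hence $\bfB'$ fails the criterion at the same $a$.
\item But $\bfB'$ is the union of an increasing $\omega$-chain of finite generated unions. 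Each of these lies in $\calC$ by pairwise closure and induction. So $\bfB'\in\calC$ is regular, which is a contradiction.
\end{itemize}

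You should also check axiom~(2) of the skeleton; for infinite subalgebras it follows from the cofinality of $\calC$.

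\textbf{A smaller point in $(2)\Rightarrow(3)$.} The density of $\bfE_0$ in $\bfE$ needs no passage to other subalgebras; it holds as stated.

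\begin{itemize}
\item Your facts (1)--(3), applied to a single model, give $\bfA\cap M_i\reg\bfA$.
\item Take a non-zero meet $a_0\wedge\cdots\wedge a_{n-1}$ with $a_j\in\bfA\cap M_{i_j}$. Refine it coordinatewise: replace each $a_j$ in turn by a member of $\bfD\cap M_{i_j}$ below it, keeping the meet non-zero. This is exactly the refinement in the proof of Proposition~\ref{prop:pibase-to-weak}.
\item Every element of $\bfE$ is a finite join of such meets, so $\bfE_0$ is dense in $\bfE$.
\end{itemize}
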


\begin{proof}
The equivalence of weak projectivity with the existence of a dense projective subalgebra is Shapiro's theorem; see Heindorf--Shapiro \cite[Theorem~5.3.1]{HeindorfShapiro} and Shapiro \cite{Shapiro}.  The remaining formulations are collected in Heindorf--Shapiro \cite[Theorem~5.3.9]{HeindorfShapiro}.  In that theorem the club formulation is attributed to Balcar--Jech--Zapletal \cite{BalcarJechZapletal}, while the countable regular-generating-family formulation is attributed to Koppelberg and Bandlow \cite{Bandlow}; see also Koppelberg \cite{KoppelbergCohen}.
\end{proof}

\section{Projective Boolean algebras}\label{sec:projective}

We begin with the easier direction of Theorem A: a coherent finite-separation map on a base yields the countable rc-generating family from Theorem~\ref{thm:projective-known}(5).

Let $X$ be closed under non-zero meets and let $s\colon X\to[X]^{<\omega}$.  For a countable set $S\subseteq X$, write $\operatorname{cl}_{s,\wedge}(S)$ for the least subset of $X$ which contains $S$, is closed under non-zero finite meets, and contains $s(p)$ for every one of its members $p$.  Explicitly, put $S_0=S$ and
\[
 S_{k+1}=S_k\cup\bigcup_{p\in S_k}s(p)
 \cup\left\{\bigwedge F:\emptyset\neq F\in[S_k]^{<\omega}\right\}\setminus\{0\},
\]
and take $\operatorname{cl}_{s,\wedge}(S)=\bigcup_{k<\omega}S_k$.  In particular, the least iterated closure of a countable set is countable.

\begin{proposition}\label{prop:base-to-projective}
Let $\bfA$ be a Boolean algebra.  Suppose that $X\subseteq\pos{\bfA}$ is a base closed under non-zero meets and that $s\colon X\to[X]^{<\omega}$ witnesses $(\fns)^*$ on $X$.  Then $\bfA$ is projective.
\end{proposition}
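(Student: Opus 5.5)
The plan is to produce a countable rc-generating family for $\bfA$ and then invoke Theorem~\ref{thm:projective-known}(5). The relative completeness of the generated subalgebras will come from the second half of Lemma~\ref{lem:closed-subalgebra}. Coherence is used to handle meets across different members of the family.

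\emph{Step 1: an enlarged closure operator.} Since $X$ is a base, for every $p\in X$ with $p\neq 1$ fix a finite set $d(p)\subseteq X$ of pairwise disjoint elements whose join is $-p$; put $d(1)=\emptyset$. Modify the closure $\operatorname{cl}_{s,\wedge}$ so that at each stage we also add $d(p)$ for every $p\in S_k$. The resulting closure $\operatorname{cl}(S)$ of a countable $S\subseteq X$ is still countable. It is closed under non-zero finite meets, under $s$, and under $d$. Let
\[
 \calR=\bigl\{\langle \operatorname{cl}(S)\rangle : S\in[X]^{\leqslant\omega}\bigr\}.
\]
Every $a\in\pos{\bfA}$ is a finite join of members of $X$, and these lie in some $\operatorname{cl}(S)$. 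Hence $\bigcup\calR=\bfA$.

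\emph{Step 2: a good base of a generated union.} Fix $\calC\subseteq\calR$ and let $Y$ be the union of the corresponding closed sets $\operatorname{cl}(S)$. Then $\bfB:=\langle\bigcup\calC\rangle=\langle Y\rangle$. The set $Y$ is closed under $s$ and $d$, but it need not be closed under meets of elements coming from different members of $\calC$. So put $P=Y^{\wedge}$, meaning the set of non-zero finite meets of members of $Y$. Then $P\subseteq X\cap\bfB$, and $P$ is closed under non-zero meets.

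The condition $s(p)\subseteq\bfB$ for $p\in P$ is exactly where coherence enters. If $p=y_0\wedge\cdots\wedge y_{n-1}$ with $y_i\in Y$, Lemma~\ref{lem:finite-meet-star} gives
\[
 s(p)\subseteq\bigl(s(y_0)\cup\cdots\cup s(y_{n-1})\bigr)^{\wedge}\subseteq Y^{\wedge}=P\subseteq\bfB .
\]

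\emph{Step 3: $P$ is a base of $\bfB$ (the main obstacle).} Every non-zero element of $\bfB$ is a finite disjoint join of non-zero atoms of a finite subalgebra $\langle y_1,\dots,y_n\rangle$ with $y_i\in Y$. It therefore suffices to decompose an atom
\[
 y_{1}\wedge\cdots\wedge y_{k}\wedge -z_{1}\wedge\cdots\wedge -z_{m},
\]
where the $y_i,z_j\in Y$. Replace each $-z_j$ by the disjoint join $\bigvee d(z_j)$ and distribute. The atom becomes a join of the meets $y_1\wedge\cdots\wedge y_k\wedge x_1\wedge\cdots\wedge x_m$ with $x_j\in d(z_j)\subseteq Y$. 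Two such meets with different choices of the $x_j$ are disjoint, because the members of each $d(z_j)$ are pairwise disjoint. Discarding the zero meets leaves a disjoint decomposition into members of $P$; the case $k=0$ is handled the same way. This is why $d$ had to be built into the closure.

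Lemma~\ref{lem:closed-subalgebra}, applied with $\bfA$ as the ambient algebra, the base $X$, and the base $P$ of $\bfB$, now yields $\bfB\rc\bfA$. Hence $\calR$ is a countable rc-generating family, and Theorem~\ref{thm:projective-known} shows that $\bfA$ is projective.
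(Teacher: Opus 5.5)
Your proof is correct and follows essentially the paper's route: a countable rc-generating family whose members carry $s$-closed bases, coherence via Lemma~\ref{lem:finite-meet-star} to keep $s$ of cross-family meets inside the generated algebra, then Lemma~\ref{lem:closed-subalgebra} and Theorem~\ref{thm:projective-known}(5). The only difference is bookkeeping. You build the complement decompositions $d(p)$ into the closure, so that $Y^{\wedge}$ is visibly a base of $\langle Y\rangle$. The paper instead decomposes all elements of the generated subalgebras at each stage, and then distributes over atoms of finite subalgebras of the $\bfD_i$.
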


\begin{proof}
We shall build a countable rc-generating family.  First note the following closure fact.  If $D\in[\bfA]^\omega$, then there is a countable subalgebra $\bfD\leqslant\bfA$ containing $D$ and a base $P_\bfD\subseteq X\cap\bfD$ of $\bfD$ such that $s(p)\subseteq P_\bfD$ for all $p\in P_\bfD$.

Indeed, construct increasing countable sets $P_n\subseteq X$ and increasing countable subalgebras $\bfD_n$ recursively.  For each non-zero $d\in D$, choose a finite disjoint family of members of $X$ whose join is $d$; the element $0$, if present, is represented by the empty join.  Let $Q_0$ be the set of all pieces so chosen, put
\[
        P_0=\operatorname{cl}_{s,\wedge}(Q_0),
        \qquad \bfD_0=\langle P_0\rangle,
\]
so that $D\subseteq\bfD_0$.  Having constructed $P_n$ and $\bfD_n$, decompose every non-zero element of $\bfD_n$ as a finite disjoint join of members of $X$, and let $Q_{n+1}$ be the set of all pieces appearing in these decompositions.  Put
\[
        P_{n+1}=\operatorname{cl}_{s,\wedge}(P_n\cup Q_{n+1}),
        \qquad
        \bfD_{n+1}=\langle P_{n+1}\rangle.
\]
Thus $P_n\subseteq P_{n+1}$, and the operation $s$ is iterated until the finite $s$-sets of every newly introduced element are included.  Finally set
\[
        P_\bfD=\bigcup_{n<\omega}P_n,
        \qquad
        \bfD=\bigcup_{n<\omega}\bfD_n .
\]
Then $\bfD$ is a countable subalgebra containing $D$, every non-zero element of $\bfD$ is decomposed at a later stage into finitely many members of $P_\bfD$, and $s(p)\subseteq P_\bfD\subseteq\bfD$ for each $p\in P_\bfD$.  Hence $P_\bfD$ is a base of $\bfD$ with the required closure property.

Let $\calR$ be the family of all countable subalgebras $\bfD\leqslant\bfA$ for which such a base $P_\bfD$ exists.  By the preceding paragraph $\bfA=\bigcup\calR$.

Fix an arbitrary subfamily $\calC\subseteq\calR$ and put
\[
        \bfE=\Bigl\langle\bigcup_{\bfD\in\calC}\bfD\Bigr\rangle.
\]
If $\calC=\emptyset$, then $\bfE=\{0,1\}\rc\bfA$, so assume that $\calC\neq\emptyset$.  Let
\[
        P=\{p_0\wedge\cdots\wedge p_{n-1}: n\geqslant 1,
        \ p_i\in P_{\bfD_i},\ \bfD_i\in\calC\}\setminus\{0\}.
\]
Then $P\subseteq X\cap\bfE$.  We verify that $P$ is a base of $\bfE$.  Let $0<e\in\bfE$.  The element $e$ belongs to the subalgebra generated by finitely many members $\bfD_0,\ldots,\bfD_{m-1}$ of $\calC$.  Choose a finite Boolean subalgebra $\bfF_i\leqslant\bfD_i$ for each $i<m$ so that $e\in\langle\bfF_0\cup\cdots\cup\bfF_{m-1}\rangle$.  Every atom of each $\bfF_i$ is a finite disjoint join of elements of $P_{\bfD_i}$, because $P_{\bfD_i}$ is a base of $\bfD_i$.  Distributing the finite Boolean expression for $e$ over these finite decompositions gives a finite disjoint decomposition of $e$ into non-zero finite meets of members of the $P_{\bfD_i}$'s.  These finite meets are precisely members of $P$.  Hence $P$ is a base of $\bfE$.

If $p=p_0\wedge\cdots\wedge p_{n-1}\in P$, Lemma~\ref{lem:finite-meet-star} gives
\[
        s(p)\subseteq (s(p_0)\cup\cdots\cup s(p_{n-1}))^{\wedge}\subseteq\bfE,
\]
because each $s(p_i)$ is contained in the corresponding $\bfD_i$.  Lemma~\ref{lem:closed-subalgebra} therefore yields $\bfE\rc\bfA$.  The family $\calR$ is a countable rc-generating family, so Theorem~\ref{thm:projective-known}(5) implies that $\bfA$ is projective.
\end{proof}

The converse uses the simple extension relatively complete construction of projective algebras.  We first isolate the precise jump-set fact used at limit stages.  This is the form of Heindorf--Shapiro \cite[Lemmas~2.1.1 and~2.1.3]{HeindorfShapiro} after translating their projections into our maps $q_\gamma^\beta$.

\begin{lemma}[Jump sets]\label{lem:jump-sets}
Let $(\bfA_\xi)_{\xi\leqslant\alpha}$ be a continuous increasing chain of Boolean algebras such that $\bfA_\xi\rc\bfA_\eta$ whenever $\xi\leqslant\eta\leqslant\alpha$.  For $u\in\bfA_\alpha$, put
\[
        d_\alpha(u)=\{\xi<\alpha:q_\xi^\alpha(u)>q_{\xi+1}^\alpha(u)\}.
\]
Then:
\begin{enumerate}
\item $d_\alpha(u)$ is finite.  If $\alpha$ is a limit ordinal and $u\in\bfA_\beta$ for some $\beta<\alpha$, then $d_\alpha(u)\subseteq\beta$.
\item For every $\lambda\leqslant\alpha$,
\[
        d_\lambda(q_\lambda^\alpha(u))=d_\alpha(u)\cap\lambda.
\]
\item If $u,v\in\bfA_\alpha$ and $u\wedge v=0$, then either
\[
        q_0^\alpha(u)\wedge q_0^\alpha(v)=0,
\]
or there is $\xi\in d_\alpha(u)\cap d_\alpha(v)$ such that
\[
        q_{\xi+1}^\alpha(u)\wedge q_{\xi+1}^\alpha(v)=0.
\]
\end{enumerate}
In particular, the lemma applies to the continuous relatively complete chain with simple extension on successors in Theorem~\ref{thm:projective-known}(3).
\end{lemma}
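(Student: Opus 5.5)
We prove (1)--(3) of Lemma~\ref{lem:jump-sets} separately. Throughout we use Lemma~\ref{lem:rc-composition}: $q_\gamma^\alpha=q_\gamma^\beta\circ q_\beta^\alpha$ for $\gamma\leqslant\beta\leqslant\alpha$, and $q_\eta^\alpha(b)=b$ once $b\in\bfA_\beta$ and $\beta\leqslant\eta$. Since $u\leqslant q_\eta^\alpha(u)\leqslant q_\xi^\alpha(u)$ for $\xi\leqslant\eta$, the family $(q_\xi^\alpha(u))_{\xi\leqslant\alpha}$ is a non-increasing sequence with $q_\alpha^\alpha(u)=u$. The set $d_\alpha(u)$ is exactly the set of indices where this sequence strictly drops at a successor step.

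\textbf{Part (1).} We argue by induction on $\alpha$.
\begin{itemize}
\item Successor case $\alpha=\beta+1$. By Lemma~\ref{lem:rc-composition}, $q_\xi^\alpha(u)=q_\xi^\beta(q_\beta^\alpha(u))$ for $\xi\leqslant\beta$. Hence $d_\alpha(u)\subseteq d_\beta(q_\beta^\alpha(u))\cup\{\beta\}$, which is finite by induction.
\item Limit case. By continuity, $u\in\bfA_\beta$ for some $\beta<\alpha$. Then $q_\eta^\alpha(u)=u$ for all $\eta\geqslant\beta$, so no jumps occur at or above $\beta$, and $d_\alpha(u)\subseteq\beta$.
\item Below $\beta$ in the limit case, $q_\xi^\alpha(u)=q_\xi^\beta(u)$ because $q_\beta^\alpha(u)=u$. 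So $d_\alpha(u)=d_\beta(u)$, which is finite by induction.
\end{itemize}
This also yields the second clause of (1).

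\textbf{Part (2).} This is immediate from composition. For $\xi<\lambda$ we have $q_\xi^\lambda(q_\lambda^\alpha(u))=q_\xi^\alpha(u)$ and $q_{\xi+1}^\lambda(q_\lambda^\alpha(u))=q_{\xi+1}^\alpha(u)$, because $\xi+1\leqslant\lambda$. So the jump conditions coincide below $\lambda$.

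\textbf{Part (3).} This is the main step. Let $\mu\leqslant\alpha$ be least such that $q_\mu^\alpha(u)\wedge q_\mu^\alpha(v)=0$. Such $\mu$ exists since $\mu=\alpha$ works, as $u\wedge v=0$. If $\mu=0$ we are in the first alternative, so assume $\mu>0$.

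First we show $\mu$ cannot be a limit. If it were, $w=q_\mu^\alpha(u)\wedge q_\mu^\alpha(v)$ would be $0$. Both factors lie in $\bfA_\mu=\bigcup_{\beta<\mu}\bfA_\beta$, so both lie in some $\bfA_\beta$ with $\beta<\mu$. Then $q_\beta^\alpha(u)=q_\beta^\mu(q_\mu^\alpha(u))=q_\mu^\alpha(u)$, and likewise for $v$. This gives disjointness already at $\beta$, contradicting the minimality of $\mu$.

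So $\mu=\xi+1$ with $q_\xi$-meets non-zero and $q_{\xi+1}$-meets zero. It remains to show $\xi\in d_\alpha(u)\cap d_\alpha(v)$. Suppose instead $q_\xi^\alpha(u)=q_{\xi+1}^\alpha(u)$. Then $q_{\xi+1}^\alpha(u)\in\bfA_\xi$, and $q_{\xi+1}^\alpha(v)\leqslant-q_{\xi+1}^\alpha(u)$ with the right-hand side in $\bfA_\xi$. Taking least covers in $\bfA_\xi$ gives $q_\xi^\alpha(v)\leqslant -q_\xi^\alpha(u)$, i.e.\ the $\xi$-meets are disjoint. This is a contradiction, and by symmetry the same holds for $v$. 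Hence $\xi$ lies in both jump sets, which gives the second alternative.

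The final remark is routine. In the chain of Theorem~\ref{thm:projective-known}(3), Lemma~\ref{lem:rc-composition} gives $\bfA_\xi\rc\bfA_\eta$ for all $\xi\leqslant\eta$, so the lemma applies.
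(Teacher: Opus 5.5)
Your proof is correct and follows the paper's argument essentially step for step. For (1) you induct on $\alpha$ using the composition rule, for (2) you use direct composition, and for (3) you take the least stage of disjointness, rule out limit stages by continuity, and show that a non-jump at $\xi$ would make the $\xi$-covers already disjoint; the only thing left implicit is the trivial base case $\alpha=0$, where $d_0(u)=\emptyset$.
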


\begin{proof}
We prove finiteness by induction on $\alpha$.  It is trivial for $\alpha=0$.  If $\alpha=\beta+1$ and $u\in\bfA_{\beta+1}$, put $u'=q_\beta^{\beta+1}(u)$.  The composition rule gives
\[
        d_{\beta+1}(u)\cap\beta=d_\beta(u'),
\]
so $d_{\beta+1}(u)$ is obtained from the finite set $d_\beta(u')$ by possibly adding $\beta$.  If $\alpha$ is a non-zero limit, continuity gives $u\in\bfA_\beta$ for some $\beta<\alpha$.  Then $q_\eta^\alpha(u)=u$ for every $\eta\in[\beta,\alpha]$, and the composition rule gives $d_\alpha(u)=d_\beta(u)$.  This proves (1).

For (2), if $\xi<\lambda$, Lemma~\ref{lem:rc-composition} yields
\[
 q_\xi^\lambda(q_\lambda^\alpha(u))=q_\xi^\alpha(u),
 \qquad
 q_{\xi+1}^\lambda(q_\lambda^\alpha(u))=q_{\xi+1}^\alpha(u),
\]
which is exactly the asserted restriction identity.

For (3), let $\delta\leqslant\alpha$ be the least ordinal such that
\[
        q_\delta^\alpha(u)\wedge q_\delta^\alpha(v)=0.
\]
Such a $\delta$ exists because the meet is zero at stage $\alpha$.  If $\delta=0$, the first alternative holds.  A non-zero limit $\delta$ is impossible: by continuity, both $q_\delta^\alpha(u)$ and $q_\delta^\alpha(v)$ belong to one common earlier algebra $\bfA_\eta$, and then the same zero meet already occurs at $\eta<\delta$.  Hence $\delta=\xi+1$ for some $\xi<\alpha$.

Suppose, for example, that $\xi\notin d_\alpha(u)$.  Then
\[
        a=q_\xi^\alpha(u)=q_{\xi+1}^\alpha(u)\in\bfA_\xi.
\]
Since $a\wedge q_{\xi+1}^\alpha(v)=0$, we have $q_{\xi+1}^\alpha(v)\leqslant-a$.  By the minimality of the $\bfA_\xi$-cover,
\[
        q_\xi^\alpha(v)
        =q_\xi^{\xi+1}(q_{\xi+1}^\alpha(v))\leqslant-a,
\]
contradicting the minimality of $\delta$.  Thus $\xi\in d_\alpha(u)$, and the symmetric argument gives $\xi\in d_\alpha(v)$.  The zero meet at stage $\xi+1$ gives the second alternative.
\end{proof}

The proof below is included in some detail because the witnessing finite sets must remain inside the chosen base.

\begin{proposition}\label{prop:projective-to-base}
Every projective Boolean algebra has a base closed under non-zero meets which carries a coherent finite-separation map.
\end{proposition}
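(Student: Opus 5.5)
The plan is to work with the simple-extension filtration of Theorem~\ref{thm:projective-known}(3). We fix a continuous chain $(\bfA_\alpha)_{\alpha\leqslant\kappa}$ with $\bfA_0=\{0,1\}$, $\bfA_\alpha\rc\bfA_{\alpha+1}$ and $\bfA_{\alpha+1}=\bfA_\alpha(e_\alpha)$. By Lemma~\ref{lem:rc-composition} we have all the projections $q_\gamma^\beta$, and Lemma~\ref{lem:jump-sets} gives finite jump sets.

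We build, by transfinite recursion on $\alpha\leqslant\kappa$, a base $X_\alpha\subseteq\pos{\bfA_\alpha}$ closed under non-zero meets, together with a coherent finite-separation map $s_\alpha\colon X_\alpha\to[X_\alpha]^{<\omega}$. The recursion must keep the following invariants:
\begin{enumerate}
\item $X_\beta\subseteq X_\alpha$ and $s_\alpha\restriction X_\beta=s_\beta$ for $\beta\leqslant\alpha$;
\item $X_\alpha$ is closed under the projections, i.e.\ $q_\beta^\alpha[X_\alpha]\subseteq X_\beta$ for all $\beta\leqslant\alpha$.
\end{enumerate}
Invariant~(2) is the device that keeps the separating elements inside the base. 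Limit stages are unions. Continuity and Lemma~\ref{lem:jump-sets}(1) show that each element already lives at an earlier stage, so being a base, meet-closure, the projection invariant and coherence are all inherited.

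At a successor $\alpha+1$, every element of $\bfA_{\alpha+1}$ has the form $(a\wedge e_\alpha)\vee(b\wedge -e_\alpha)$ with $a,b\in\bfA_\alpha$. We therefore put
\[
 X_{\alpha+1}=X_\alpha\cup\{x\wedge e_\alpha,\ x\wedge -e_\alpha: x\in X_\alpha\}\setminus\{0\}.
\]
This set is a base of $\bfA_{\alpha+1}$: decompose $a$ and $b$ into $X_\alpha$-pieces and cut the pieces by $e_\alpha$ and $-e_\alpha$. It is also closed under non-zero meets, since $x\wedge e_\alpha$ and $y\wedge -e_\alpha$ are disjoint and $(x\wedge e_\alpha)\wedge y=(x\wedge y)\wedge e_\alpha$.

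For the projection invariant we need, for example, $q_\alpha^{\alpha+1}(x\wedge e_\alpha)=x\wedge q_\alpha(e_\alpha)$ by Lemma~\ref{lem:q-basic}(1). This is in general not a member of $X_\alpha$. So before passing to $\alpha+1$ we will enlarge $X_\alpha$ by the non-zero meets $x\wedge q_\alpha(e_\alpha)$ and $x\wedge q_\alpha(-e_\alpha)$ for $x\in X_\alpha$, and then reclose under non-zero meets and projections. The point to verify is that this enlargement stays a base and does not break coherence at earlier stages. Alternatively, one can fix in advance the generating set $G=\{q_\eta^{\xi+1}(\pm e_\xi):\eta\leqslant\xi<\kappa\}$ and let $X$ be the set of all non-zero finite meets of members of $G$.

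With this $X$, the separation map is defined by
\[
 s(x)=\{q_{\xi+1}(x),\ q_\xi(x): \xi\in d(x)\}\cup\{q_0(x),x\}\ \text{together with the $G$-factors of these elements},
\]
closed under the finite meets and complements that are needed. Each $d(x)$ is finite by Lemma~\ref{lem:jump-sets}(1), so $s(x)$ is finite.

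Finite separation then comes from Lemma~\ref{lem:jump-sets}(3). If $x\wedge y=0$, then either $q_0(x)\wedge q_0(y)=0$, or there is $\xi\in d(x)\cap d(y)$ with $q_{\xi+1}(x)\wedge q_{\xi+1}(y)=0$. The only catch is that the separating pair must lie in $s(x)\cap s(y)$, not merely in $s(x)$ and $s(y)$ separately. To handle this, we replace $q_{\xi+1}(x)$ by the canonical element $q_{\xi+1}(x)$ built from $q_\xi(x)$ and $\pm e_\xi$. In the simple extension $\bfA_\xi(e_\xi)$, two disjoint elements with a genuine jump at $\xi$ are separated by $e_\xi\wedge q_\xi(\cdot)$ versus $-e_\xi$-type elements. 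So we aim to show that one of the pairs $\{q_\xi(e_\xi)\wedge c,\ -e_\xi\}$ or $\{e_\xi,\ q_\xi(-e_\xi)\wedge c'\}$, with $c,c'$ built from the lower projections, works. These involve only $\xi$ and data at stage $\xi$, and we may put all such elements into both $s(x)$ and $s(y)$, recursing downward through $d(x)\cap\xi$.

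Coherence~\eqref{eq:star} should then follow from Lemma~\ref{lem:jump-sets}(2) and the inclusion $d(x\wedge y)\subseteq d(x)\cup d(y)$. To prove that inclusion, we use the fact that $q_\xi(x\wedge y)$ is a disjoint join of meets of the $q$-images of $G$-factors. From this, every member of $s(x\wedge y)$ will be a non-zero meet of $G$-factors already listed in $s(x)\cup s(y)$.

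I expect the main obstacle to be this last bookkeeping. The difficulty is that $q_\xi$ does not commute with meets, so $q_\xi(x\wedge y)$ need not be $q_\xi(x)\wedge q_\xi(y)$. My first attempt would be to define $s(x)$ as the full finite set of $G$-factors $q_\eta^{\xi+1}(\pm e_\xi)$ indexed by $\xi\in d(x)$ and $\eta\in d(x)\cup\{0\}$. Coherence then reduces to the combinatorial inclusion of jump sets, which I would prove by induction along the chain using the successor formula in the proof of Lemma~\ref{lem:jump-sets}(1).
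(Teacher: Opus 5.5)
There is a genuine gap at the successor step, and that step is where all the content of the proposition lies. Your framework is sound: the simple-extension filtration, and keeping $s_\alpha$ equal to $s_\beta$ on $X_\beta$ so that limits are unions. But the successor step is never carried out, and neither repair you offer goes through.

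\emph{The enlargement repair is circular.} Enlarging $X_\alpha$ by the meets $x\wedge q_\alpha(\pm e_\alpha)$ is retroactive, because $X_\alpha$ and $s_\alpha$ are already fixed. The new members need $s_\alpha$-values that separate and cohere with all of $X_\alpha$. Re-closing under projections then adds elements $q^\alpha_\beta(x\wedge q_\alpha(e_\alpha))$ to every earlier $X_\beta$. This is the original problem again, and nothing makes it terminate.

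\emph{The global route fails where you expect trouble.} Read $G$ with $\eta\leqslant\xi+1$, so that $\pm e_\xi\in G$. On non-zero meets of members of $G$, projections do not commute with meets, and $d(x\wedge y)\subseteq d(x)\cup d(y)$ is false. For a counterexample, let $a,e,g$ be free generators and $f=g\wedge(-e\vee a)$. Take the chain $\{0,1\}\subseteq\langle a\rangle\subseteq\langle a,e\rangle\subseteq\langle a,e,f\rangle$ and write $q_\xi=q_\xi^3$.
\begin{itemize}
\item $x=e$ and $y=q_2(f)=-e\vee a$ lie in $G$, and $d(x)=d(y)=\{1\}$.
\item $x\wedge y=a\wedge e$ has $q_0(x\wedge y)=1>a=q_1(x\wedge y)$, so $0\in d(x\wedge y)$.
\item Your first definition of $s$ puts $a$ into $s(x\wedge y)$. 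But $s(x)\cup s(y)\subseteq\{1,e,y\}$, together with $-e,-y$ if complements are added, and no meet of these equals $a$. So coherence fails.
\end{itemize}
Your separation step is likewise only an aim: no pair lying in $s(x)\cap s(y)$ is ever produced.

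The missing idea is to refine the new pieces instead of enlarging $X_\alpha$. Let $Q=q_\alpha^{\alpha+1}$. Choose a finite partition of unity $R_\alpha\subseteq X_\alpha$, each member of which lies below one of $Q(e_\alpha)\wedge-Q(-e_\alpha)$, $Q(e_\alpha)\wedge Q(-e_\alpha)$, $Q(-e_\alpha)\wedge-Q(e_\alpha)$. Adjoin the non-zero elements $x\wedge r\wedge\pm e_\alpha$ with $x\in X_\alpha\cup\{1\}$ and $r\in R_\alpha$.
\begin{itemize}
\item If such an element is non-zero, then $r\leqslant Q(\pm e_\alpha)$. Hence $Q(x\wedge r\wedge\pm e_\alpha)=x\wedge r\in X_\alpha$.
\item Moreover $Q(u\wedge v)=Q(u)\wedge Q(v)$ whenever $u,v\in X_{\alpha+1}$ and $u\wedge v>0$. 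This is exactly the commutation your $G$ lacks.
\item Put $s_{\alpha+1}(u)=T_\alpha\cup s_\alpha(Q(u))$ with $T_\alpha=\{r\wedge\pm e_\alpha\}\setminus\{0\}$.
\item Disjoint new $u,v$ with $Q(u)\wedge Q(v)>0$ must share $r$ and carry opposite signs, by Lemma~\ref{lem:q-basic}(3). So $r\wedge e_\alpha$ and $r\wedge -e_\alpha$ are the common separators you were looking for.
\item For every other disjoint pair, $Q(u)\wedge Q(v)=0$ and stage $\alpha$ applies. Coherence follows from the meet commutation.
\end{itemize}

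The paper applies this formula to old elements as well, which is why it must rebuild $s_\alpha$ at limits from jump sets and $T$-blocks. If you apply it only to new elements and keep $s_{\alpha+1}=s_\alpha$ on $X_\alpha$, your restriction invariant survives and your limit stages really are just unions. The mixed cases still work: for old $u$ and new $v$, $u\wedge v=0$ gives $u\wedge Q(v)=0$, and $u\wedge v>0$ gives $Q(u\wedge v)=u\wedge Q(v)$.
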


\begin{proof}
Let $(\bfA_\alpha)_{\alpha\leqslant\kappa}$ be a continuous chain as in Theorem~\ref{thm:projective-known}(3), meaning that for every \(\alpha<\kappa\), $\bfA_\alpha\rc\bfA_{\alpha+1}$ and $\bfA_{\alpha+1}=\bfA_\alpha(e_\alpha)$ for some $e_\alpha\in\bfA_{\alpha+1}$.  By Lemma~\ref{lem:rc-composition}, $\bfA_\gamma\rc\bfA_\beta$ whenever $\gamma\leqslant\beta\leqslant\kappa$; write $q_\gamma^\beta$ for the corresponding least-cover map.

We construct, by induction on $\alpha\leqslant\kappa$, a base $X_\alpha\subseteq\pos{\bfA_\alpha}$ and a map $s_\alpha\colon X_\alpha\to[X_\alpha]^{<\omega}$.  At every successor stage $\alpha+1$ we shall also choose a finite set $T_\alpha\subseteq X_{\alpha+1}$.  The induction requirements are the following.
\begin{enumerate}
\item[(I1)] $X_\alpha$ is closed under non-zero meets.
\item[(I2)] $X_\gamma\subseteq X_\alpha$ whenever $\gamma<\alpha$.
\item[(I3)] If $a\in X_\alpha$, $\gamma<\alpha$,  then $q_\gamma^\alpha(a)\in X_\gamma$.
\item[(I4)] If $a,b\in X_\alpha$, $a\wedge b>0$, and $\gamma<\alpha$, then
\[
        q_\gamma^\alpha(a\wedge b)=q_\gamma^\alpha(a)\wedge q_\gamma^\alpha(b).
\]
\item[(I5)] $s_\alpha$ witnesses $(\fns)^*$ on $X_\alpha$.
\end{enumerate}
For $\bfA_0=\{0,1\}$ take $X_0=\{1\}$ and $s_0(1)=\{1\}$.

Assume first that $X_\alpha$ and $s_\alpha$ have been constructed.  Put
\[
        Q=q_\alpha^{\alpha+1},\qquad e=e_\alpha,
        \qquad e^0=e,\quad e^1=-e,
\]
and
\[
        a_i=Q(e^i)\qquad(i=0,1).
\]
By Lemma~\ref{lem:q-basic}(2), $a_0\vee a_1=1$.  The three elements
\[
        a_0\wedge -a_1,\quad a_0\wedge a_1,
        \quad a_1\wedge -a_0
\]
form a finite partition of $1$ after deleting zeros.  Since $X_\alpha$ is a base, choose a finite pairwise disjoint set $R_\alpha\subseteq X_\alpha$ such that $\bigvee R_\alpha=1$ and each $r\in R_\alpha$ is contained in one of these three pieces.  Define
\[
        X_{\alpha+1}=X_\alpha\cup\left(
        \{x\wedge r\wedge e^i: x\in X_\alpha,
        \ r\in R_\alpha,
        \ i<2\}\setminus\{0\}\right)\cup T_\alpha,
\]
where
\[
        T_\alpha=\{r\wedge e^i:r\in R_\alpha,
        \ i<2\}\setminus\{0\}.
\]
The set $X_{\alpha+1}$ is a base of $\bfA_{\alpha+1}=\bfA_\alpha(e)$.  Indeed, every element of $\bfA_\alpha(e)$ is a finite disjoint join of elements of the form $b\wedge e^i$ with $b\in\bfA_\alpha$; decompose $b$ into a finite disjoint join of elements of $X_\alpha$ and then refine by the finite partition $R_\alpha$.  Every member of $X_{\alpha+1}\setminus X_\alpha$ can be written as
\[
        x\wedge r\wedge e^i
        \qquad\text{with }x\in X_\alpha\cup\{1\},\ r\in R_\alpha,\ i<2.
\]
Using this representation, closure under non-zero meets is immediate: the meet of two old elements is old, the meet of an old element with a new element is either zero or has the same displayed form, and the meet of two new elements is either zero or again has that form.

Let $x\wedge r\wedge e^i>0$, where $x\in X_\alpha\cup\{1\},\ r\in R_\alpha,\ i<2$.  Since $e^i\leqslant a_i=Q(e^i)$, the element $r$ cannot be contained in the one-sided partition piece $a_{1-i}\wedge-a_i$; hence $r\leqslant a_i$.  Lemma~\ref{lem:q-basic}(1) therefore gives
\begin{equation}\label{eq:successor-q}
        Q(x\wedge r\wedge e^i)=x\wedge r .
\end{equation}
For old elements $x\in X_\alpha$ we of course have $Q(x)=x$.  This verifies (I3) for $\gamma=\alpha$.  If $\gamma<\alpha$, then Lemma~\ref{lem:rc-composition} gives
\[
        q_\gamma^{\alpha+1}(u)=q_\gamma^\alpha(Q(u))
        \qquad(u\in X_{\alpha+1}),
\]
and (I3) follows from the induction hypothesis at stage $\alpha$.

We next verify (I4) at the successor stage.  First we prove
\begin{equation}\label{eq:successor-meet-q}
        Q(u\wedge v)=Q(u)\wedge Q(v)
\end{equation}
whenever $u,v\in X_{\alpha+1}$ and $u\wedge v>0$.  If both elements are old, this is trivial.  If $u\in X_\alpha$ and $v=x\wedge r\wedge e^i$ is new, then Lemma~\ref{lem:q-basic}(1) gives
\[
        Q(u\wedge v)=u\wedge Q(v)=Q(u)\wedge Q(v).
\]
If both elements are new, write $u=x\wedge r\wedge e^i$ and $v=y\wedge r'\wedge e^j$, where $x,y\in X_\alpha\cup\{1\}$.  Then non-zero intersection forces $r=r'$ and $i=j$, and \eqref{eq:successor-q} gives
\[
        Q(u\wedge v)=x\wedge y\wedge r=Q(u)\wedge Q(v).
\]
Thus \eqref{eq:successor-meet-q} holds.  For $\gamma<\alpha$, combine \eqref{eq:successor-meet-q}, Lemma~\ref{lem:rc-composition}, and the induction hypothesis:
\[
q_\gamma^{\alpha+1}(u\wedge v)
 =q_\gamma^\alpha(Q(u\wedge v))
  =q_\gamma^\alpha(Q(u)\wedge Q(v))  
 =q_\gamma^\alpha(Q(u))\wedge q_\gamma^\alpha(Q(v))
  =q_\gamma^{\alpha+1}(u)\wedge q_\gamma^{\alpha+1}(v).
\]
This proves (I4).

Define
\begin{equation}\label{eq:successor-s}
        s_{\alpha+1}(u)=T_\alpha\cup s_\alpha(Q(u))
        \qquad(u\in X_{\alpha+1}).
\end{equation}
The right-hand side is a finite subset of $X_{\alpha+1}$.  We verify finite separation.  Let $u,v\in X_{\alpha+1}$ and $u\wedge v=0$.  If $Q(u)\wedge Q(v)=0$, let $c,d\in s_\alpha(Q(u))\cap s_\alpha(Q(v))$ be the separating pair given by the induction hypothesis.  Then $c,d\in s_{\alpha+1}(u)\cap s_{\alpha+1}(v)$, they are disjoint, and they dominate $u$ and $v$ because $u\leqslant Q(u)$ and $v\leqslant Q(v)$.

It remains to consider the case $Q(u)\wedge Q(v)>0$.  The two elements cannot both be old, and they cannot consist of one old element and one new element.  Indeed, if $u\in X_\alpha$ and $v$ is new, then $0<u\wedge Q(v)\leqslant Q(v)$; Lemma~\ref{lem:q-basic}(3), applied to $\bfA_\alpha\rc\bfA_{\alpha+1}$, gives $(u\wedge Q(v))\wedge v>0$, contradicting $u\wedge v=0$.  Hence both are new.  Write
\[
        u=x\wedge r\wedge e^i,
        \qquad
        v=y\wedge r'\wedge e^j,
        \qquad x,y\in X_\alpha\cup\{1\}.
\]
Since $Q(u)\wedge Q(v)>0$, the disjointness of $R_\alpha$ gives $r=r'$.  Since $u\wedge v=0$, we must have $i\neq j$.  Indeed, if $i=j$, then
\[
        0<Q(u)\wedge Q(v)=x\wedge y\wedge r\leqslant Q(e^i).
\]
Lemma~\ref{lem:q-basic}(3) would give
\[
        u\wedge v=x\wedge y\wedge r\wedge e^i>0,
\]
contrary to the assumption.  The two elements $r\wedge e^i$ and $r\wedge e^j$ are disjoint members of $T_\alpha\subseteq s_{\alpha+1}(u)\cap s_{\alpha+1}(v)$ and they dominate $u$ and $v$, respectively.  Thus $s_{\alpha+1}$ witnesses $(\fns)$.

Finally let $u,v\in X_{\alpha+1}$ and $u\wedge v>0$.  By \eqref{eq:successor-meet-q},
\[
        Q(u\wedge v)=Q(u)\wedge Q(v).
\]
Using the coherence of $s_\alpha$ and \eqref{eq:successor-s},
\[
 s_{\alpha+1}(u\wedge v)
=T_\alpha\cup s_\alpha(Q(u)\wedge Q(v))
\subseteq T_\alpha\cup\bigl(s_\alpha(Q(u))\cup s_\alpha(Q(v))\bigr)^{\wedge}
\subseteq \bigl(s_{\alpha+1}(u)\cup s_{\alpha+1}(v)\bigr)^{\wedge}.
\]
So (I5) holds at $\alpha+1$.

Now let $\alpha$ be a non-zero limit ordinal and suppose that the construction has been carried out below $\alpha$.  Put
\[
        X_\alpha=\bigcup_{\xi<\alpha}X_\xi .
\]
This is a base of $\bfA_\alpha$, because every element of the continuous union $\bfA_\alpha=\bigcup_{\xi<\alpha}\bfA_\xi$ already belongs to some earlier algebra and is decomposed there.  It is closed under non-zero meets: if $u\in X_\beta$ and $v\in X_\delta$, take $\eta<\alpha$ with $\beta,\delta\leqslant\eta$, and use (I1) at stage $\eta$.

The projection invariants are also inherited explicitly.  If $u\in X_\beta$ for some $\beta<\alpha$, then
\begin{equation}\label{eq:old-at-limit}
        q_\eta^\alpha(u)=u\quad(\beta\leqslant\eta<\alpha),
        \qquad
        q_\gamma^\alpha(u)=q_\gamma^\beta(u)\quad(\gamma<\beta).
\end{equation}
The first equality is because $u\in\bfA_\eta$; the second follows from Lemma~\ref{lem:rc-composition}.  Hence (I3) at $\alpha$ follows from (I3) at $\beta$.  If $u,v\in X_\beta$ and $u\wedge v>0$, then \eqref{eq:old-at-limit} and (I4) at stage $\beta$ give
\[
        q_\gamma^\alpha(u\wedge v)
        =q_\gamma^\alpha(u)\wedge q_\gamma^\alpha(v)
        \qquad(\gamma<\alpha),
\]
where the case $\gamma\geqslant\beta$ is trivial.  Thus (I4) is preserved at the limit stage.

For $u\in X_\alpha$, use the jump set $d_\alpha(u)$ from Lemma~\ref{lem:jump-sets}.  It is finite, and for $\lambda<\alpha$ the restriction identity reads
\begin{equation}\label{eq:jump-restriction}
        d_\lambda(q_\lambda^\alpha(u))=d_\alpha(u)\cap\lambda .
\end{equation}
For $\xi<\alpha$, write $\xi^\flat$ for the largest member of $\{0\}\cup\Lim$ which is not greater than $\xi$, where $\Lim$ denotes the class of limit ordinals; equivalently, $\xi=\xi^\flat+n$ for some $n<\omega$.  Thus $[\xi^\flat,\xi]$ is a finite interval.  Define
\begin{equation}\label{eq:limit-s}
\begin{split}
        s_\alpha(u)=&\ s_0(q_0^\alpha(u))
        \cup\bigcup_{\xi\in d_\alpha(u)}s_{\xi+1}(q_{\xi+1}^\alpha(u))\\
        &\cup\bigcup_{\xi\in d_\alpha(u)}\bigcup_{\zeta\in[\xi^\flat,\xi]}T_\zeta .
\end{split}
\end{equation}
This is finite.  Notice that every term is defined and lies in $X_\alpha$: by (I3), $q_{\xi+1}^\alpha(u)\in X_{\xi+1}$, and $T_\zeta\subseteq X_{\zeta+1}\subseteq X_\alpha$.

We record two bookkeeping inclusions which are the heart of the limit step.  First, if $\lambda<\alpha$ is either $0$ or a limit ordinal, then
\begin{equation}\label{eq:limit-old-s-inclusion}
        s_\lambda(q_\lambda^\alpha(u))\subseteq s_\alpha(u)
        \qquad(u\in X_\alpha).
\end{equation}
For $\lambda=0$ this is one of the terms in \eqref{eq:limit-s}.  If $\lambda$ is a non-zero limit, expand $s_\lambda(q_\lambda^\alpha(u))$ by the already constructed limit formula at stage $\lambda$ and use \eqref{eq:jump-restriction}; each resulting $s_{\eta+1}$ term and each block of $T_\zeta$'s is one of the corresponding terms in \eqref{eq:limit-s} for $s_\alpha(u)$.

Second, if $\xi<\alpha$ and $\lambda=\xi^\flat$, then
\begin{equation}\label{eq:block-expansion}
        s_{\xi+1}(q_{\xi+1}^\alpha(u))
        \subseteq
        s_\lambda(q_\lambda^\alpha(u))
        \cup\bigcup_{\zeta\in[\lambda,\xi]}T_\zeta .
\end{equation}
Indeed, iterating the successor formula \eqref{eq:successor-s} along the finite interval $[\lambda,\xi]$ gives
\[
        s_{\xi+1}(q_{\xi+1}^\alpha(u))
        \subseteq T_\xi\cup s_\xi(q_\xi^\alpha(u))
        \subseteq T_\xi\cup T_{\xi-1}\cup s_{\xi-1}(q_{\xi-1}^\alpha(u))
        \subseteq\cdots\subseteq
        s_\lambda(q_\lambda^\alpha(u))
        \cup\bigcup_{\zeta\in[\lambda,\xi]}T_\zeta,
\]
with the evident interpretation when $\lambda=\xi$.

The finite $T$-block is precisely what handles a jump inherited from only one of $u$ and $v$: the stage-$\xi+1$ data for the non-jumping element are expanded backwards through the finite successor interval $[\xi^\flat,\xi]$, while the same block is supplied directly by the jumping element.  Combining \eqref{eq:limit-old-s-inclusion} and \eqref{eq:block-expansion}, we obtain the inclusion needed for coherence: whenever $\xi\in d_\alpha(u)\cup d_\alpha(v)$,
\begin{equation}\label{eq:limit-inclusion}
        s_{\xi+1}(q_{\xi+1}^\alpha(u))
        \cup s_{\xi+1}(q_{\xi+1}^\alpha(v))
        \subseteq s_\alpha(u)\cup s_\alpha(v).
\end{equation}
For example, if $\xi\in d_\alpha(v)\setminus d_\alpha(u)$, then the first term is contained in $s_\alpha(u)\cup\bigcup_{\zeta\in[\xi^\flat,\xi]}T_\zeta$ by \eqref{eq:limit-old-s-inclusion} and \eqref{eq:block-expansion}, while the displayed block of $T_\zeta$'s is included in $s_\alpha(v)$ by \eqref{eq:limit-s}.  The other cases are direct or symmetric.

We now prove finite separation for $s_\alpha$.  Let $u,v\in X_\alpha$ and $u\wedge v=0$.  If $q_0^\alpha(u)\wedge q_0^\alpha(v)=0$, then the separating pair supplied by $s_0$ for $q_0^\alpha(u)$ and $q_0^\alpha(v)$ belongs to $s_\alpha(u)\cap s_\alpha(v)$ and dominates $u$ and $v$.  Otherwise Lemma~\ref{lem:jump-sets}(3) gives $\xi\in d_\alpha(u)\cap d_\alpha(v)$ with
\[
        q_{\xi+1}^\alpha(u)\wedge q_{\xi+1}^\alpha(v)=0.
\]
The induction hypothesis at stage $\xi+1$ gives a separating pair in
\[
        s_{\xi+1}(q_{\xi+1}^\alpha(u))
        \cap s_{\xi+1}(q_{\xi+1}^\alpha(v)),
\]
and this pair is contained in $s_\alpha(u)\cap s_\alpha(v)$ because $\xi$ is a jump of both $u$ and $v$.  Since $u\leqslant q_{\xi+1}^\alpha(u)$ and $v\leqslant q_{\xi+1}^\alpha(v)$, it separates $u$ and $v$.

It remains to prove coherence.  Let $u,v\in X_\alpha$ and $u\wedge v>0$.  By (I4), for every $\xi<\alpha$,
\[
        q_\xi^\alpha(u\wedge v)=q_\xi^\alpha(u)\wedge q_\xi^\alpha(v).
\]
Consequently,
\begin{equation}\label{eq:jumps-subset}
        d_\alpha(u\wedge v)\subseteq d_\alpha(u)\cup d_\alpha(v).
\end{equation}
Indeed, if $\xi\notin d_\alpha(u)\cup d_\alpha(v)$, then
\[
q_\xi^\alpha(u\wedge v)
=q_\xi^\alpha(u)\wedge q_\xi^\alpha(v)
=q_{\xi+1}^\alpha(u)\wedge q_{\xi+1}^\alpha(v)
=q_{\xi+1}^\alpha(u\wedge v).
\]
The first term in \eqref{eq:limit-s} for $s_\alpha(u\wedge v)$ is controlled by the coherence of $s_0$:
\[
        s_0(q_0^\alpha(u\wedge v))
        \subseteq
        \bigl(s_0(q_0^\alpha(u))\cup s_0(q_0^\alpha(v))\bigr)^\wedge
        \subseteq (s_\alpha(u)\cup s_\alpha(v))^\wedge .
\]
Now let $\xi\in d_\alpha(u\wedge v)$.  Put
\[
        u_\xi=q_{\xi+1}^\alpha(u),
        \qquad v_\xi=q_{\xi+1}^\alpha(v).
\]
Then $u_\xi\wedge v_\xi=q_{\xi+1}^\alpha(u\wedge v)>0$, and by the induction hypothesis at stage $\xi+1$,
\[
        s_{\xi+1}(u_\xi\wedge v_\xi)
        \subseteq\bigl(s_{\xi+1}(u_\xi)\cup s_{\xi+1}(v_\xi)\bigr)^\wedge .
\]
By \eqref{eq:jumps-subset}, $\xi\in d_\alpha(u)\cup d_\alpha(v)$, so \eqref{eq:limit-inclusion} puts the right-hand side inside $(s_\alpha(u)\cup s_\alpha(v))^\wedge$.  Finally, the $T$-block attached to such a $\xi$ in \eqref{eq:limit-s} is contained in $s_\alpha(u)\cup s_\alpha(v)$ by \eqref{eq:jumps-subset}.  Thus every term in $s_\alpha(u\wedge v)$ belongs to $(s_\alpha(u)\cup s_\alpha(v))^\wedge$, proving coherence.

The induction is complete.  Taking $\alpha=\kappa$ gives a meet-closed base $X_\kappa$ of $\bfA_\kappa=\bfA$ and a coherent finite-separation map $s_\kappa$ on it.
\end{proof}

\begin{theorem}\label{thm:projective-main}
A Boolean algebra is projective if and only if it has a base closed under non-zero meets which carries a coherent finite-separation map.
\end{theorem}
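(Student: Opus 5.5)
The theorem is simply the conjunction of the two directions already established, so the proof will be a short assembly rather than new work.

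For the implication from right to left, suppose $\bfA$ has a base $X\subseteq\pos{\bfA}$, closed under non-zero meets, carrying a coherent finite-separation map $s$. This is exactly the hypothesis of Proposition~\ref{prop:base-to-projective}, so $\bfA$ is projective. Recall how that proposition works: the countable closures under $s$ and meets form a countable rc-generating family. Lemma~\ref{lem:finite-meet-star} guarantees that generated unions still carry $s$-closed bases. Lemma~\ref{lem:closed-subalgebra} then makes them relatively complete, and Theorem~\ref{thm:projective-known}(5) concludes projectivity.

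For the implication from left to right, suppose $\bfA$ is projective. Theorem~\ref{thm:projective-known}(3) provides a continuous relatively complete chain with simple extensions. Proposition~\ref{prop:projective-to-base} applied to this chain yields the meet-closed base $X_\kappa$ together with a coherent finite-separation map $s_\kappa$.

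There is no genuine obstacle at this stage. The only point to check is that the notions of \emph{base} and of \emph{coherent finite-separation map} used in both propositions are the same ones fixed in Definition~\ref{def:fns-star} and in the Preliminaries, namely decomposition bases and condition \eqref{eq:star}. They are.

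The real difficulty of the theorem was the limit-stage bookkeeping in Proposition~\ref{prop:projective-to-base}. There the jump sets of Lemma~\ref{lem:jump-sets} and the finite $T$-blocks are needed to keep coherence under meets, and that work is already done.
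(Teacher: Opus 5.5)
Your proposal is correct and matches the paper's proof. The paper likewise obtains the forward implication from Proposition~\ref{prop:projective-to-base} and the reverse implication from Proposition~\ref{prop:base-to-projective}, and your sketch of how those propositions work (the rc-generating family via Lemmas~\ref{lem:finite-meet-star} and~\ref{lem:closed-subalgebra}, and the jump-set/$T$-block bookkeeping at limit stages) agrees with the paper.
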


\begin{proof}
The forward implication is Proposition~\ref{prop:projective-to-base}.  The reverse implication is Proposition~\ref{prop:base-to-projective}.
\end{proof}

We say that a Boolean algebra $\bfA$ is \emph{rc-filtered} if there is a closed unbounded family of countable relatively complete subalgebras of $\bfA$.

The following theorem is well known (see \cite[Corollary 2.2.7, Theorem 1.3.2]{HeindorfShapiro}); we include the standard construction because it will be used below.

\begin{theorem}\label{thm:small-rc-chain}
If a Boolean algebra $\bfA$ is rc-filtered and $|\bfA|\leqslant\omega_1$, then there is a continuous chain $(\bfA_\alpha)_{\alpha\leqslant\omega_1}$ of subalgebras of $\bfA$ such that
\[
        \bfA_{\omega_1}=\bfA,
\]
$\bfA_0$ is countable, $\bfA_\alpha\rc\bfA_{\alpha+1}$ for every $\alpha<\omega_1$, and $\bfA_{\alpha+1}$ is countably generated over $\bfA_\alpha$.
\end{theorem}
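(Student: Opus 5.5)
The plan is the standard chain construction from a club. Let $\calC$ be a club of countable relatively complete subalgebras of $\bfA$. If $\bfA$ is countable, take $\bfA_\alpha=\bfA$ for all $\alpha\leqslant\omega_1$; the identity inclusion is trivially relatively complete and adds no generators. So assume $|\bfA|=\omega_1$ and enumerate $\bfA=\{a_\xi:\xi<\omega_1\}$.

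By recursion on $\alpha<\omega_1$ I build an increasing continuous chain $(\bfC_\alpha)_{\alpha<\omega_1}$ of members of $\calC$. Take $\bfC_0\in\calC$ containing $a_0$, using cofinality of $\calC$ in $[\bfA]^\omega$. At successors, choose $\bfC_{\alpha+1}\in\calC$ containing $\bfC_\alpha\cup\{a_\alpha\}$, again by cofinality. At a countable limit $\lambda$, put $\bfC_\lambda=\bigcup_{\alpha<\lambda}\bfC_\alpha$. This is the union of an increasing countable chain; choosing a cofinal $\omega$-sequence in $\lambda$ if necessary, closure of $\calC$ puts it in $\calC$. Finally set $\bfA_\alpha=\bfC_\alpha$ for $\alpha<\omega_1$ and $\bfA_{\omega_1}=\bigcup_{\alpha<\omega_1}\bfC_\alpha$. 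The chain is continuous at limits below $\omega_1$ by construction and at $\omega_1$ by definition. Since $a_\alpha\in\bfA_{\alpha+1}$ for every $\alpha$, we get $\bfA_{\omega_1}=\bfA$, and $\bfA_0$ is countable.

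Every $\bfA_{\alpha+1}$ is countable, so it is trivially countably generated over $\bfA_\alpha$. It remains to check $\bfA_\alpha\rc\bfA_{\alpha+1}$. Both algebras are relatively complete in $\bfA$, because they belong to $\calC$. Given $b\in\bfA_{\alpha+1}$, the element $q_{\bfA_\alpha}^{\bfA}(b)$ lies in $\bfA_\alpha\subseteq\bfA_{\alpha+1}$ and is the least member of $\bfA_\alpha$ above $b$. This minimality is intrinsic: it quantifies only over elements of $\bfA_\alpha$, not over the ambient algebra. So the same element serves as the least cover computed inside $\bfA_{\alpha+1}$. In general, $\bfA_\alpha\rc\bfA$ together with $\bfA_\alpha\subseteq\bfA_{\alpha+1}$ already gives $\bfA_\alpha\rc\bfA_{\alpha+1}$.

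The only delicate point is the limit step, where the union must remain in $\calC$; this uses exactly the closure clause of the club definition. I expect no real obstacle. The hypothesis $|\bfA|\leqslant\omega_1$ is what makes an $\omega_1$-length enumeration absorb every element.
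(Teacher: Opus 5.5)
Your proposal is correct and follows essentially the same route as the paper: you build a continuous $\omega_1$-chain through the club that absorbs an enumeration of $\bfA$, and you deduce $\bfA_\alpha\rc\bfA_{\alpha+1}$ from $\bfA_\alpha\rc\bfA$. The only difference is cosmetic: you handle the countable case with a constant chain, whereas the paper enumerates $\bfA$ with repetitions.
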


\begin{proof}
Let $\calR$ be a club of countable relatively complete subalgebras of $\bfA$, and enumerate $\bfA$ as $(a_\alpha)_{\alpha<\omega_1}$, allowing repetitions when $|\bfA|<\omega_1$.  Choose $\bfA_0\in\calR$.  Suppose that $\bfA_\alpha\in\calR$ has been defined.  Choose $\bfA_{\alpha+1}\in\calR$ such that
\[
        \bfA_\alpha\cup\{a_\alpha\}\subseteq\bfA_{\alpha+1};
\]
this is possible because $\calR$ is cofinal in $[\bfA]^\omega$.  At a non-zero limit ordinal $\lambda<\omega_1$, put
\[
        \bfA_\lambda=\bigcup_{\alpha<\lambda}\bfA_\alpha.
\]
The chain below $\lambda$ is countable, so the closure of $\calR$ gives $\bfA_\lambda\in\calR$.

The chain below $\omega_1$ is continuous and has union $\bfA$.  Put $\bfA_{\omega_1}=\bfA$; this makes the chain continuous also at its endpoint.  Moreover, if $\alpha<\beta\leqslant\omega_1$, then $\bfA_\alpha\rc\bfA$ and $\bfA_\alpha\leqslant\bfA_\beta$, hence $\bfA_\alpha\rc\bfA_\beta$.  Finally, each successor algebra below the endpoint is countable and therefore countably generated over its predecessor.
\end{proof}

By \cite[Theorem 2.2.3]{HeindorfShapiro} and Proposition~\ref{prop:fn-fns-equivalence}, a Boolean algebra is rc-filtered if and only if it has global $(\fns)$.  A coherent finite-separation map on a meet-closed base implies projectivity by Theorem~\ref{thm:projective-main}, and hence implies global $(\fns)$.  At cardinality at most $\omega_1$, the converse also holds.

\begin{theorem}\label{thm:small-coherent-base}
Let $\bfA$ be a Boolean algebra with $|\bfA|\leqslant\omega_1$.  If $\bfA$ has global $(\fns)$, then $\bfA$ has a base $X\subseteq\pos{\bfA}$, closed under non-zero meets, which carries a coherent finite-separation map.
\end{theorem}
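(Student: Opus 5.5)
The plan is to reduce Theorem~\ref{thm:small-coherent-base} to Proposition~\ref{prop:projective-to-base} by showing that $\bfA$ is projective. We use the classical fact that, at cardinality at most $\omega_1$, global $(\fns)$ already forces projectivity.

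First, by Proposition~\ref{prop:fn-fns-equivalence} and the cited result \cite[Theorem 2.2.3]{HeindorfShapiro}, global $(\fns)$ is equivalent to $\bfA$ being rc-filtered. If $\bfA$ is countable, it is projective by Halmos, and we may skip to the last step. Otherwise $|\bfA|=\omega_1$, and Theorem~\ref{thm:small-rc-chain} supplies a continuous chain $(\bfA_\alpha)_{\alpha\leqslant\omega_1}$ with these properties: $\bfA_0$ is countable, every successor inclusion is relatively complete, each $\bfA_{\alpha+1}$ is countably generated over $\bfA_\alpha$, and $\bfA_{\omega_1}=\bfA$. These are exactly the hypotheses of Theorem~\ref{thm:projective-known}(4). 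Hence $\bfA$ is projective.

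Finally, Proposition~\ref{prop:projective-to-base} applied to the projective algebra $\bfA$ yields a base $X\subseteq\pos{\bfA}$, closed under non-zero meets, together with a coherent finite-separation map $s\colon X\to[X]^{<\omega}$. This is precisely the conclusion.

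The only genuine content is the second step, and it has been outsourced to Theorems~\ref{thm:small-rc-chain} and~\ref{thm:projective-known}. The main point to check is that the chain really meets the requirements of Theorem~\ref{thm:projective-known}(4):
\begin{itemize}
\item continuity at limits, which holds because clubs are closed under countable increasing unions;
\item relative completeness of each $\bfA_\alpha$ in $\bfA_{\alpha+1}$, which follows from $\bfA_\alpha\rc\bfA$ and $\bfA_\alpha\leqslant\bfA_{\alpha+1}$;
\item countable generation of each successor over its predecessor, which is automatic since the successor algebras are countable.
\end{itemize}
If one prefers to avoid Theorem~\ref{thm:projective-known}(4), an alternative is to refine each countable step into an $\omega$-sequence of simple relatively complete extensions. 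This uses Halmos-type arguments inside countable algebras to reach the form of Theorem~\ref{thm:projective-known}(3), and then runs the construction of Proposition~\ref{prop:projective-to-base} directly. That refinement would be the main obstacle, but it is unnecessary given the quoted equivalence.
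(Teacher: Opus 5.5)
Your argument is correct and follows the paper's proof: first rc-filteredness, then Theorem~\ref{thm:small-rc-chain} together with Theorem~\ref{thm:projective-known}(4) to obtain projectivity, and finally Proposition~\ref{prop:projective-to-base}. The one difference is that the paper does not cite Heindorf--Shapiro for rc-filteredness but checks it directly. It observes that the countable subalgebras $\bfB$ closed under $s$ form a club, and then Lemma~\ref{lem:closed-subalgebra}, applied with $X=\pos{\bfA}$ and $P=\pos{\bfB}$, gives $\bfB\rc\bfA$.
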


\begin{proof}
If $\bfA$ is countable, then it is projective by Halmos's theorem, and Proposition~\ref{prop:projective-to-base} applies.  Assume therefore that $|\bfA|=\omega_1$, and let
\[
        s\colon\pos{\bfA}\longrightarrow[\pos{\bfA}]^{<\omega}
\]
witness global $(\fns)$.  Closing a countable subset of $\bfA$ successively under the Boolean operations and under $s$ produces a countable subalgebra $\bfB\leqslant\bfA$ such that
\[
        s(b)\subseteq\bfB\qquad(b\in\pos{\bfB}).
\]
Consequently, the family
\[
\calR=\{\bfB\leqslant\bfA:\ |\bfB|\leqslant\aleph_0
        \text{ and }s(b)\subseteq\bfB\text{ for every }b\in\pos{\bfB}\}
\]
is a club.  For every $\bfB\in\calR$, apply Lemma~\ref{lem:closed-subalgebra} with $X=\pos{\bfA}$ and $P=\pos{\bfB}$.  Since these are bases of $\bfA$ and $\bfB$, respectively, the lemma yields $\bfB\rc\bfA$.  Thus $\bfA$ is rc-filtered.

Theorem~\ref{thm:small-rc-chain} now gives a continuous chain $(\bfA_\alpha)_{\alpha\leqslant\omega_1}$ with $\bfA_{\omega_1}=\bfA$, satisfying condition~(4) of Theorem~\ref{thm:projective-known}; hence $\bfA$ is projective.  Proposition~\ref{prop:projective-to-base} completes the proof.
\end{proof}

As shown by Heindorf and Shapiro \cite[Proposition 6.3.2]{HeindorfShapiro}, projective and rc-filtered Boolean algebras do not coincide in general.  At cardinality at most $\omega_1$, they do coincide.

\begin{corollary}[{\cite[Corollary 2.2.7]{HeindorfShapiro}}]\label{cor:small-projective}
For Boolean algebras of cardinality at most $\omega_1$, projectivity, $(\sfn)$, rc-filteredness, global $(\fns)$ (equivalently $(\fn)$), and the existence of a meet-closed base carrying a coherent finite-separation map are equivalent.
\end{corollary}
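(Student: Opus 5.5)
The plan is to arrange the five properties in a cycle of implications, using only results already established above. Fix a Boolean algebra $\bfA$ with $|\bfA|\leqslant\omega_1$. I will prove
\[
\text{projective}\Rightarrow(\sfn)\Rightarrow(\fn)\Leftrightarrow\text{global }(\fns)\Rightarrow\text{coherent base}\Rightarrow\text{projective},
\]
and then show separately that rc-filteredness is equivalent to global $(\fns)$.

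The first two implications are the Heindorf--Shapiro implications projective $\Rightarrow(\sfn)\Rightarrow(\fn)$ quoted in the introduction; they hold without any cardinality restriction. The equivalence of $(\fn)$ with global $(\fns)$ is Proposition~\ref{prop:fn-fns-equivalence}. The step from global $(\fns)$ to a meet-closed base carrying a coherent finite-separation map is exactly Theorem~\ref{thm:small-coherent-base}, and this is the only place where $|\bfA|\leqslant\omega_1$ is used. The final step, from such a base back to projectivity, is Proposition~\ref{prop:base-to-projective}, equivalently the reverse direction of Theorem~\ref{thm:projective-main}. This closes the cycle.

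For rc-filteredness, there are two directions. Global $(\fns)$ implies rc-filtered by the club argument inside the proof of Theorem~\ref{thm:small-coherent-base}. Take the family of countable subalgebras $\bfB$ closed under $s$; apply Lemma~\ref{lem:closed-subalgebra} with $X=\pos{\bfA}$ and $P=\pos{\bfB}$ to get $\bfB\rc\bfA$. This family is cofinal in $[\bfA]^\omega$ and closed under unions of countable chains, so it is a club. Conversely, rc-filtered implies projective at this cardinality: Theorem~\ref{thm:small-rc-chain} gives a continuous chain of the kind in Theorem~\ref{thm:projective-known}(4). (Alternatively, cite \cite[Theorem 2.2.3]{HeindorfShapiro}.)

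I do not expect any genuine obstacle, since every step is a citation of an earlier result. The only point needing care is that the $\omega_1$ hypothesis enters only through Theorem~\ref{thm:small-rc-chain}, both directly and inside Theorem~\ref{thm:small-coherent-base}. The remaining implications hold for algebras of arbitrary size.
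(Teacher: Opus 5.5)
Your proposal is correct and follows essentially the same route as the paper. It uses the cycle projective $\Rightarrow(\sfn)\Rightarrow(\fn)\Leftrightarrow$ global $(\fns)\Rightarrow$ coherent base $\Rightarrow$ projective via Theorem~\ref{thm:small-coherent-base} and Proposition~\ref{prop:base-to-projective}, and ties in rc-filteredness through Theorem~\ref{thm:small-rc-chain} and condition~(4) of Theorem~\ref{thm:projective-known}. The only difference is cosmetic: the paper cites Heindorf--Shapiro's Theorem~2.2.3 for rc-filtered $\Leftrightarrow$ global $(\fns)$, whereas you take global $(\fns)\Rightarrow$ rc-filtered from the club argument inside the proof of Theorem~\ref{thm:small-coherent-base} and get the converse through projectivity.
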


\begin{proof}
The equivalence of rc-filteredness and global $(\fns)$ is \cite[Theorem~2.2.3]{HeindorfShapiro} together with Proposition~\ref{prop:fn-fns-equivalence}.  Theorem~\ref{thm:small-coherent-base} and Theorem~\ref{thm:projective-main} give the equivalence with the coherent base condition.  Projectivity implies $(\sfn)$, and $(\sfn)$ implies $(\fn)$ \cite{HeindorfShapiro,MilovichSFN}; the converse implications at cardinality at most $\omega_1$ now follow from the already established equivalence of $(\fn)$ and projectivity.  Finally, projective Boolean algebras are rc-filtered, while the converse at cardinality at most $\omega_1$ follows from Theorem~\ref{thm:small-rc-chain} and condition~(4) of Theorem~\ref{thm:projective-known}.
\end{proof}

\section{Weakly projective Boolean algebras}\label{sec:weakly-projective}

The weakly projective theorem follows the same pattern, with regular subalgebras and $\pi$-bases replacing relatively complete subalgebras and bases.

\begin{proposition}\label{prop:pibase-to-weak}
Let $\bfA$ be a Boolean algebra.  Suppose that $X\subseteq\pos{\bfA}$ is a $\pi$-base closed under non-zero meets and that $s\colon X\to[X]^{<\omega}$ witnesses $(\fns)^*$ on $X$.  Then $\bfA$ is weakly projective.
\end{proposition}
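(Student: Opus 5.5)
We mirror the proof of Proposition~\ref{prop:base-to-projective} and aim for condition~(3) of Theorem~\ref{thm:weak-known}, a countable regular-generating family. The plan is to produce a family $\calR$ of countable subalgebras with $\bfA=\bigcup\calR$ such that, for every subfamily $\calC\subseteq\calR$, the generated algebra $\bfE=\langle\bigcup\calC\rangle$ admits a $\pi$-base $P\subseteq X\cap\bfE$ with $s(p)\subseteq\bfE$ for all $p\in P$. The first part of Lemma~\ref{lem:closed-subalgebra}, the part that needs only $\pi$-bases, then gives $\bfE\reg\bfA$.

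\textbf{Closure step.} Given $D\in[\bfA]^\omega$, we build increasing countable $P_n\subseteq X$ and subalgebras $\bfD_n=\langle D\cup P_n\rangle$. For every non-zero $d\in\bfD_n$ we pick some $x_d\in X$ with $x_d\leqslant d$, let $Q_{n+1}$ collect these choices, and put $P_{n+1}=\operatorname{cl}_{s,\wedge}(P_n\cup Q_{n+1})$, starting from $P_0=\operatorname{cl}_{s,\wedge}(Q_0)$ with $Q_0$ chosen below the members of $D$. We set $P_\bfD=\bigcup_n P_n$ and $\bfD=\bigcup_n\bfD_n$. Then $\bfD$ is countable and contains $D$. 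The set $P_\bfD\subseteq X\cap\bfD$ is a $\pi$-base of $\bfD$, it is closed under non-zero meets, and $s(p)\subseteq P_\bfD$ for every $p\in P_\bfD$. The one change from the base case is that $\bfD$ must be generated by $D$ together with $P_\bfD$, because elements of $D$ need not be joins of members of $X$. We let $\calR$ be the family of all countable $\bfD$ carrying such a $P_\bfD$; the closure step shows $\bfA=\bigcup\calR$.

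\textbf{Generated algebras.} For nonempty $\calC\subseteq\calR$ we take $P$ to be the set of non-zero finite meets $p_0\wedge\cdots\wedge p_{n-1}$ with $p_i\in P_{\bfD_i}$ and $\bfD_i\in\calC$. Lemma~\ref{lem:finite-meet-star} gives $s(p)\subseteq(s(p_0)\cup\cdots\cup s(p_{n-1}))^{\wedge}\subseteq\bfE$, since each $s(p_i)$ lies in $\bfD_i$.

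\textbf{Main obstacle.} The delicate step is to show that $P$ is a $\pi$-base of $\bfE$. A non-zero $e\in\bfE$ is a finite join of non-zero meets $d_0\wedge\cdots\wedge d_{m-1}$ with $d_i\in\bfD_i$, but a $\pi$-base element $p_0\leqslant d_0$ in $\bfD_0$ may be disjoint from $d_1$. So we must choose the $p_i$ compatibly, and we use the regularity of the individual $\bfD_i$ together with coherence to do so.

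We argue by induction on $m$. Each $\bfD_i\reg\bfA$ holds by Lemma~\ref{lem:closed-subalgebra}, applied with $\calC=\{\bfD_i\}$. Suppose we already have $p\in P$ generated by $\bfD_0,\dots,\bfD_{m-2}$ with $p\leqslant d_0\wedge\cdots\wedge d_{m-2}$ and $p\wedge d_{m-1}>0$. Since $X$ is a $\pi$-base, we choose $x\in X$ with $x\leqslant p\wedge d_{m-1}$.

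The proof of Lemma~\ref{lem:closed-subalgebra} shows that $q_X(x)$, the meet of those members of $s(x)\cap\bfD_{m-1}$ that lie above $x$, is the least element of $\bfD_{m-1}$ above $x$. Hence $q_X(x)\leqslant d_{m-1}$. Choose $p'\in P_{\bfD_{m-1}}$ with $p'\leqslant q_X(x)$. By Lemma~\ref{lem:q-basic}(3), $p'\wedge x>0$, and therefore $p\wedge p'$ is a non-zero member of $P$ lying below $d_0\wedge\cdots\wedge d_{m-1}$.

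To keep the induction going we need $p\wedge d_{m-1}>0$ at each stage. We arrange this by ordering the steps so that the regular cover of the current piece is intersected with the next $d_i$, repeating the argument through the regular projections $q_X$. If this bookkeeping turns out to be awkward, the fallback is to pass from $\calR$ to a club (condition~(4) of Theorem~\ref{thm:weak-known}) and verify closure under binary joins only.

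Once $P$ is known to be a $\pi$-base of $\bfE$, Lemma~\ref{lem:closed-subalgebra} gives $\bfE\reg\bfA$. Then $\calR$ is a countable regular-generating family, and Theorem~\ref{thm:weak-known} shows that $\bfA$ is weakly projective.
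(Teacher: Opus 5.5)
Your architecture is the paper's: the closure step, the family $\calR$, the set $P$ of non-zero finite meets, and the appeals to Lemma~\ref{lem:finite-meet-star} and Lemma~\ref{lem:closed-subalgebra}. Your single refinement step is also sound. For $x\in X$, the first part of the proof of Lemma~\ref{lem:closed-subalgebra} gives the least cover $q_X(x)$ of $x$ in $\bfD_i$, and Lemma~\ref{lem:q-basic}(3) then gives $p'\wedge x>0$ for every $p'\in P_{\bfD_i}$ below it. This is a fine substitute for the paper's use of the regularity criterion, Lemma~\ref{lem:regular-criterion}.

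The gap is in how the steps are chained, which you flag but do not resolve. Your invariant only records $p\wedge d_{m-1}>0$, and you choose $x\leqslant p\wedge d_{m-1}$. The new element $p\wedge p'$ does lie below $d_0\wedge\cdots\wedge d_{m-1}$. But $p'$ was chosen without reference to $d_m,\ldots,d_{n-1}$, so $p\wedge p'$ may be disjoint from $d_m$, and the hypothesis needed for the next step is not restored. The start has the same problem: a member of $P_{\bfD_0}$ below $d_0$ may miss $d_1$. Your remark about ``ordering the steps'' does not say what quantity is to be maintained.

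The repair is to carry the entire remaining meet, as the paper does. Put $b_0=d_0\wedge\cdots\wedge d_{n-1}>0$ and maintain
\[
        b_i=p_0\wedge\cdots\wedge p_{i-1}\wedge d_i\wedge\cdots\wedge d_{n-1}>0 .
\]
At stage $i$, choose $x\in X$ with $x\leqslant b_i$, not merely $x\leqslant p\wedge d_i$. Since $x\leqslant d_i\in\bfD_i$, the least cover $q_X(x)$ in $\bfD_i$ satisfies $q_X(x)\leqslant d_i$. Choose $p_i\in P_{\bfD_i}$ with $p_i\leqslant q_X(x)$. Then $p_i\wedge x>0$ and $p_i\leqslant d_i$, so
\[
        b_{i+1}=p_i\wedge b_i\geqslant p_i\wedge x>0 .
\]
After $n$ steps, $b_n=p_0\wedge\cdots\wedge p_{n-1}$ is a non-zero member of $P$ below $e$. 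With this invariant made explicit, the rest of your proposal goes through and coincides with the paper's proof, and the club fallback is unnecessary.
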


\begin{proof}
For every countable $D\subseteq\bfA$ we use the same least iterated closure as in Proposition~\ref{prop:base-to-projective}.  Put $\bfD_0=\langle D\rangle$ and $P_0=\emptyset$.  Having constructed $\bfD_n$ and $P_n$, choose, for every $b\in\pos{\bfD_n}$, an element $x_b\in X$ with $x_b\leqslant b$, and put
\[
        P_{n+1}=\operatorname{cl}_{s,\wedge}
        \bigl(P_n\cup\{x_b:b\in\pos{\bfD_n}\}\bigr),
        \qquad
        \bfD_{n+1}=\langle\bfD_n\cup P_{n+1}\rangle.
\]
Let $P_\bfD=\bigcup_{n<\omega}P_n$ and $\bfD=\bigcup_{n<\omega}\bfD_n$.  Then $\bfD$ is countable and contains $D$; every positive element of $\bfD$ contains a member of $P_\bfD$, so $P_\bfD$ is a $\pi$-base of $\bfD$; and the least-closure construction gives $s(p)\subseteq P_\bfD$ for every $p\in P_\bfD$.

Let $\calR$ be the family of all countable subalgebras with such a $\pi$-base.  Then $\bfA=\bigcup\calR$.  By Lemma~\ref{lem:closed-subalgebra}, each member of $\calR$ is regular in $\bfA$.

We show that $\calR$ is a countable regular-generating family.  Fix an arbitrary subfamily $\calC\subseteq\calR$ and put
\[
        \bfE=\Bigl\langle\bigcup_{\bfD\in\calC}\bfD\Bigr\rangle .
\]
If $\calC=\emptyset$, then $\bfE=\{0,1\}\reg\bfA$, so assume that $\calC\neq\emptyset$.  For each $\bfD\in\calC$ choose a witnessing $\pi$-base $P_\bfD$.  Let $P$ be the set of all non-zero finite meets
\[
        p_0\wedge\cdots\wedge p_{n-1},
        \qquad p_i\in P_{\bfD_i},\ \bfD_i\in\calC .
\]
Then $P\subseteq X\cap\bfE$.  We verify that $P$ is a $\pi$-base of $\bfE$.

Let $0<e\in\bfE$.  The element $e$ belongs to the subalgebra generated by finitely many members $\bfD_0,
\ldots,\bfD_{n-1}$ of $\calC$.  By distributivity, $e$ contains a non-zero element of the form
\[
        d_0\wedge\cdots\wedge d_{n-1},
        \qquad d_i\in\pos{\bfD_i}.
\]
We refine this meet one coordinate at a time.  Suppose that $p_0\in P_{\bfD_0},\ldots,p_{i-1}\in P_{\bfD_{i-1}}$ have been chosen so that
\[
        b_i=p_0\wedge\cdots\wedge p_{i-1}\wedge d_i\wedge\cdots\wedge d_{n-1}>0
\]
(with the evident interpretation for $i=0$).  Since $\bfD_i\reg\bfA$, Lemma~\ref{lem:regular-criterion} gives $a_i\in\pos{\bfD_i}$ such that every non-zero member of $\bfD_i$ below $a_i$ meets $b_i$.  The element $a_i\wedge d_i$ is non-zero, for otherwise $a_i$ itself would be a non-zero member of $\bfD_i$ below $a_i$ disjoint from $b_i$.  Choose $p_i\in P_{\bfD_i}$ with $p_i\leqslant a_i\wedge d_i$.  Then $p_i\wedge b_i>0$, so the induction continues.  At the end we have a member $p_0\wedge\cdots\wedge p_{n-1}$ of $P$ below $e$.  Hence $P$ is a $\pi$-base of $\bfE$.

If $p=p_0\wedge\cdots\wedge p_{n-1}\in P$, Lemma~\ref{lem:finite-meet-star} gives
\[
        s(p)\subseteq (s(p_0)\cup\cdots\cup s(p_{n-1}))^{\wedge}\subseteq\bfE,
\]
because each $s(p_i)$ is contained in the corresponding $\bfD_i$.  Lemma~\ref{lem:closed-subalgebra} therefore yields $\bfE\reg\bfA$.  Thus $\calR$ is a countable regular-generating family, and Theorem~\ref{thm:weak-known}(3) implies that $\bfA$ is weakly projective.
\end{proof}

\begin{proposition}\label{prop:weak-to-pibase}
Every weakly projective Boolean algebra has a $\pi$-base closed under non-zero meets which carries a coherent finite-separation map.
\end{proposition}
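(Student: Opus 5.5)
The natural plan is to reduce to Proposition~\ref{prop:projective-to-base} through a dense projective subalgebra. By Theorem~\ref{thm:weak-known}(2), a weakly projective $\bfA$ has a dense subalgebra $\bfD\leqslant\bfA$ which is projective. Proposition~\ref{prop:projective-to-base}, applied to $\bfD$, gives a base $X\subseteq\pos{\bfD}$ of $\bfD$ and a map $s\colon X\to[X]^{<\omega}$. The base $X$ is closed under non-zero meets, and $s$ witnesses $(\fns)^*$ on $X$ computed inside $\bfD$. We take the same $X$ and $s$ for $\bfA$.

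First we check that $X$ is a $\pi$-base of $\bfA$. Given $a\in\pos{\bfA}$, density of $\bfD$ yields $d\in\pos{\bfD}$ with $d\leqslant a$. Since $X$ is a base of $\bfD$, the element $d$ is a finite disjoint join of members of $X$. Any one of these members lies below $a$.

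Next we check that every notion in Definition~\ref{def:fns-star} is absolute between $\bfD$ and $\bfA$. This holds because $\bfD$ is a subalgebra of $\bfA$: meets, the order, and the relation $c\wedge d=0$ are computed identically in $\bfD$ and in $\bfA$. Hence "closed under non-zero meets" has the same meaning in both algebras. The finite-separation clause, the sets $F^{\wedge}$, and the coherence inclusion \eqref{eq:star} likewise refer only to elements of $X\subseteq\bfD$ and their meets. So $s$ is a coherent finite-separation map on $X$ regarded inside $\bfA$.

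There is essentially no obstacle beyond correctly invoking Shapiro's dense projective subalgebra theorem. The one point to verify with care is that "base of $\bfD$" descends to "$\pi$-base of $\bfA$" through density, as shown above. If one preferred to avoid Theorem~\ref{thm:weak-known}(2), the alternative would be to build a projective algebra co-complete with $\bfA$ and transport the $\pi$-base through the common completion. The dense-subalgebra route is much more direct, and it is the one I would use.
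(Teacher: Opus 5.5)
Your proposal is correct and follows the paper's proof: take a dense projective subalgebra via Theorem~\ref{thm:weak-known}, apply the projective case to get a meet-closed base with a coherent finite-separation map, and note that density makes this base a $\pi$-base of $\bfA$. Your added checks (that some piece of the decomposition of $d$ lies below $a$, and that meets, disjointness and $F^{\wedge}$ are the same in $\bfD$ and $\bfA$) just spell out what the paper leaves implicit.
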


\begin{proof}
By Theorem~\ref{thm:weak-known}, a weakly projective algebra $\bfA$ contains a dense projective subalgebra $\bfB$.  By Theorem~\ref{thm:projective-main}, $\bfB$ has a base $X$ closed under non-zero meets and carrying a coherent finite-separation map.  Since $\bfB$ is dense in $\bfA$, the same set $X$ is a $\pi$-base of $\bfA$.
\end{proof}

\begin{theorem}\label{thm:weak-main}
A Boolean algebra is weakly projective if and only if it has a $\pi$-base closed under non-zero meets which carries a coherent finite-separation map.
\end{theorem}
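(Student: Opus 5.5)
The statement is Theorem~\ref{thm:weak-main}, and it is assembled exactly as Theorem~\ref{thm:projective-main} was: each direction has already been isolated as a separate proposition. We take the two directions in turn.

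The forward implication is Proposition~\ref{prop:weak-to-pibase}. Its proof first invokes Shapiro's characterisation, Theorem~\ref{thm:weak-known}(1)$\Leftrightarrow$(2), to obtain a dense projective subalgebra $\bfB\leqslant\bfA$. Theorem~\ref{thm:projective-main} then gives a base $X$ of $\bfB$, closed under non-zero meets, carrying a coherent finite-separation map $s$. Two points need checking:
\begin{enumerate}
\item Density of $\bfB$ in $\bfA$ makes $X$ a $\pi$-base of $\bfA$. Every $a\in\pos{\bfA}$ lies above some $b\in\pos{\bfB}$, and $b$ lies above a member of $X$, since $b$ is a non-empty disjoint join of members of $X$.
\item Closure of $X$ under non-zero meets, and both the separation and coherence conditions for $s$, refer only to meets and disjointness of elements of $X$. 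These are computed identically in $\bfB$ and in $\bfA$, so they transfer verbatim.
\end{enumerate}

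The reverse implication is Proposition~\ref{prop:pibase-to-weak}. Given a meet-closed $\pi$-base $X$ with a coherent map $s$, we build the family $\calR$ of countable subalgebras $\bfD$ that possess an $s$-closed $\pi$-base $P_\bfD\subseteq X\cap\bfD$. The closure construction shows $\bfA=\bigcup\calR$. For any subfamily $\calC\subseteq\calR$, the non-zero finite meets of members of the various $P_\bfD$, $\bfD\in\calC$, form a $\pi$-base of the generated algebra $\bfE$. This uses coordinatewise refinement through the regularity criterion, Lemma~\ref{lem:regular-criterion}, applied to each $\bfD_i\reg\bfA$. Lemma~\ref{lem:finite-meet-star} keeps the $s$-values of these meets inside $\bfE$, and Lemma~\ref{lem:closed-subalgebra} then yields $\bfE\reg\bfA$. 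Thus $\calR$ is a countable regular-generating family, and Theorem~\ref{thm:weak-known}(3) gives weak projectivity.

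The only genuinely delicate step is the $\pi$-base refinement for $\bfE$. There, regularity of each member of $\calR$ must be established first, via the first part of Lemma~\ref{lem:closed-subalgebra}, before it can be used. That argument is already carried out above, so the theorem follows by combining the two propositions.
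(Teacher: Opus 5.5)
Your proposal is correct and follows the paper's proof exactly. The paper proves the theorem by combining Propositions~\ref{prop:pibase-to-weak} and~\ref{prop:weak-to-pibase}. Your sketches of both match the paper's arguments: the dense projective subalgebra for the forward direction, and for the reverse direction the countable regular-generating family obtained via Lemmas~\ref{lem:regular-criterion}, \ref{lem:finite-meet-star} and~\ref{lem:closed-subalgebra}.
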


\begin{proof}
This is the combination of Propositions~\ref{prop:pibase-to-weak} and \ref{prop:weak-to-pibase}.
\end{proof}

Recall that a Boolean algebra is a \emph{Cohen algebra} if its completion is the completion of a free Boolean algebra.  Equivalently, it is co-complete with a free Boolean algebra.  A Boolean algebra has \emph{uniform density} $\kappa$ if every non-zero relative algebra has density $\kappa$.

\begin{corollary}\label{cor:cohen}
Let $\kappa$ be an infinite cardinal and let $\bfA$ be a Boolean algebra of uniform density $\kappa$.  Then $\bfA$ is a Cohen algebra if and only if it has a $\pi$-base closed under non-zero meets which carries a coherent finite-separation map.
\end{corollary}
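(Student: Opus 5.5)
The statement reduces to Theorem~\ref{thm:weak-main} combined with a known characterisation of Cohen algebras among weakly projective algebras of uniform density. The forward direction is immediate. If $\bfA$ is Cohen, its completion is that of a free algebra $\Fr(\kappa')$, so $\bfA$ is co-complete with a free, hence projective, algebra. Thus $\bfA$ is weakly projective, and Proposition~\ref{prop:weak-to-pibase} provides the meet-closed $\pi$-base carrying a coherent finite-separation map. In fact this direction needs no hypothesis on density.

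For the converse, suppose $\bfA$ has uniform density $\kappa$ and a meet-closed $\pi$-base carrying a coherent finite-separation map. By Proposition~\ref{prop:pibase-to-weak}, $\bfA$ is weakly projective. By Theorem~\ref{thm:weak-known}(2), $\bfA$ then has a dense projective subalgebra $\bfB$. Density is a completion invariant: $\pi$-weights of relative algebras agree for co-complete algebras, because a $\pi$-base of a relative algebra of $\bfB$ is a $\pi$-base of the corresponding relative algebra of $\bfA$, and conversely. Hence $\bfB$ also has uniform density $\kappa$.

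The remaining step, which I expect to carry most of the weight, is to show that a projective algebra of uniform density $\kappa$ is co-complete with $\Fr(\kappa)$. I would cite the standard result, due to Koppelberg \cite{KoppelbergCohen} and recorded in Heindorf--Shapiro (Chapter~5), that a weakly projective algebra of uniform density $\kappa$ is a Cohen algebra. Equivalently, one can use Shapiro's theorem that the completion of a projective algebra is a product of completions of free algebras. Under uniform density all factors then have the same density $\kappa$, and a product of copies of $\RO(\Fr(\kappa))$ is again $\RO(\Fr(\kappa))$ by a standard homogeneity argument.

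If a self-contained argument were required, I would instead run Theorem~\ref{thm:projective-known}(3) on $\bfB$. The idea is to show that, after regrouping the simple extensions into blocks, each stage adds a regular free factor relative to every non-zero element, and then to apply a back-and-forth argument in the completion. This is the genuinely hard step, so I would rely on the cited theorem. Since $\bfB$ is dense in $\bfA$, $\bfA$ is co-complete with $\bfB$, hence with $\Fr(\kappa)$, and is therefore a Cohen algebra.
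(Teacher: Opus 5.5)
Your proof is correct and is essentially the paper's: the paper also just combines Theorem~\ref{thm:weak-main} with the Koppelberg/Heindorf--Shapiro characterisation (Cohen $\iff$ weakly projective with uniform density). Your detour through the dense projective subalgebra $\bfB$ and the completion-invariance of density is harmless but unnecessary, since the cited result applies to $\bfA$ directly.

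One caution about your aside: a product of uncountably many copies of $\RO(\Fr(\kappa))$ is not ccc, so it is not Cohen. That alternative route works only because projective algebras, being subalgebras of free algebras, are ccc, so the decomposition has countably many factors.
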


\begin{proof}
Heindorf--Shapiro's form of Koppelberg's Cohen-algebra characterisation says that a Boolean algebra is Cohen if and only if it is weakly projective and has uniform density; see \cite[Proposition 5.2.5]{HeindorfShapiro} and \cite{KoppelbergCohen}.  Now apply Theorem~\ref{thm:weak-main}.
\end{proof}

\section{The global coherent finite-separation property}\label{sec:global}

This section proves that the coherent property becomes countability when it is imposed on every non-zero element.

We begin with a finite-support normalisation inside free Boolean algebras.  If $F=\Fr(Y)$ and $u\in F$, a finite set $S\subseteq Y$ is a \emph{support} of $u$ if $u\in\langle S\rangle$.

\begin{lemma}\label{lem:support-independence}
Let $Y$ be an independent generating set for $F=\Fr(Y)$.  Let $R,S_0,\ldots,S_{n-1}$ be finite subsets of $Y$ such that $R,S_0,\ldots,S_{n-1}$ are pairwise disjoint.  If $0<u_i\in\langle S_i\rangle$ for each $i<n$ and $0<v\in\langle R\rangle$, then
\[
        v\wedge u_0\wedge\cdots\wedge u_{n-1}>0 .
\]
In particular, non-zero elements with pairwise disjoint finite supports have non-zero meet.
\end{lemma}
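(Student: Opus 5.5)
The plan is to use the standard normal-form description of the free algebra $F=\Fr(Y)$: since $Y$ is independent, every elementary conjunction $\bigwedge_{y\in T}y^{\varepsilon(y)}$ over a finite $T\subseteq Y$ with $\varepsilon\colon T\to\{0,1\}$ is non-zero. Here $y^{1}=y$ and $y^{0}=-y$. Moreover, two elementary conjunctions over disjoint finite sets $T,T'$ have as meet the elementary conjunction over $T\cup T'$, and that meet is again non-zero. Independence means precisely that every such conjunction over distinct generators is non-zero, so this is immediate from the definition.

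First I would treat each factor separately. For each $i<n$, since $0<u_i\in\langle S_i\rangle$ and $\langle S_i\rangle$ is finite with atoms the elementary conjunctions over $S_i$ (all non-zero by independence), I pick an atom $c_i$ of $\langle S_i\rangle$ with $c_i\leqslant u_i$. This uses disjunctive normal form: $u_i$ is the join of the atoms of $\langle S_i\rangle$ below it, and at least one exists because $u_i>0$. In the same way I pick an atom $c$ of $\langle R\rangle$ with $c\leqslant v$. If $R$ or some $S_i$ is empty, the corresponding algebra is $\{0,1\}$ and the atom is $1$, so those degenerate cases are harmless.

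Next, $c\wedge c_0\wedge\cdots\wedge c_{n-1}$ is the elementary conjunction over $R\cup S_0\cup\cdots\cup S_{n-1}$ determined by the combined sign function. This is well defined precisely because the sets $R,S_0,\ldots,S_{n-1}$ are pairwise disjoint, so no generator receives two signs. By independence this conjunction is non-zero, and it lies below $v\wedge u_0\wedge\cdots\wedge u_{n-1}$, which is therefore non-zero.

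The "in particular" clause is the case $R=\emptyset$ and $v=1$. The only point needing care is the claim that atoms of $\langle S\rangle$ are exactly the non-zero elementary conjunctions over $S$. This is standard for independent $S$, since the $2^{|S|}$ conjunctions are pairwise disjoint, non-zero, and join to $1$. I do not expect any real obstacle here.
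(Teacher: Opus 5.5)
Your proposal is correct and is essentially the paper's argument. Choosing an atom (elementary conjunction) of $\langle S_i\rangle$ below $u_i$ is the same as the paper's choice of a valuation of $S_i$ making $u_i$ equal to $1$. In both versions, pairwise disjointness of $R,S_0,\ldots,S_{n-1}$ lets these combine into one non-zero elementary product below $v\wedge u_0\wedge\cdots\wedge u_{n-1}$.
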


\begin{proof}
For each $i<n$, choose a valuation of $S_i$ which makes $u_i$ equal to $1$, and choose a valuation of $R$ which makes $v$ equal to $1$.  Since the supports are pairwise disjoint, these finite valuations are compatible and extend to a valuation of $Y$.  The corresponding elementary product is a non-zero element below $v\wedge u_0\wedge\cdots\wedge u_{n-1}$.
\end{proof}

\begin{lemma}\label{lem:delta-pattern}
Let $B$ be an uncountable subalgebra of a free Boolean algebra $F=\Fr(Y)$.  Then there are an uncountable set $I$, pairwise distinct elements $b_\xi\in B\setminus\{0,1\}$ for $\xi\in I$, a finite root $R\subseteq Y$, pairwise disjoint non-empty finite sets $P_\xi\subseteq Y\setminus R$, and, for each atom $\tau\in\At(\langle R\rangle)$, a Boolean term $t_\tau$ in $n$ variables such that, after identifying $P_\xi$ with these $n$ variables,
\begin{equation}\label{eq:delta-pattern}
        b_\xi=\bigvee_{\tau\in\At(\langle R\rangle)}
        \bigl(\tau\wedge t_\tau(P_\xi)\bigr)
        \qquad(\xi\in I).
\end{equation}
Moreover, for some atom $\tau_0\in\At(\langle R\rangle)$,
\[
        0<t_{\tau_0}<1
\]
in the free Boolean algebra on $n$ generators.  Consequently, for every finite non-empty $H\subseteq I$,
\[
        0<\bigwedge_{\xi\in H}b_\xi<1 .
\]
\end{lemma}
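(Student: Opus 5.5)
Each element $b\in B\subseteq\Fr(Y)$ has a finite minimal support $\operatorname{supp}(b)\subseteq Y$: in a free algebra the intersection of two supports is again a support, so the minimal support is unique. The plan is to run a $\Delta$-system argument on these supports and then pigeonhole the finitely many remaining combinatorial choices.

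First, since $B$ is uncountable and only countably many elements have support inside any given finite set, we can choose uncountably many distinct $b_\xi\in B\setminus\{0,1\}$ whose supports $S_\xi=\operatorname{supp}(b_\xi)$ are pairwise distinct. By the $\Delta$-system lemma we pass to an uncountable subfamily in which the $S_\xi$ form a $\Delta$-system with root $R$. We then pigeonhole twice more to reach an uncountable set $I$ on which:
\begin{enumerate}
\item $|S_\xi\setminus R|=n$ is constant;
\item after fixing an enumeration of each $P_\xi=S_\xi\setminus R$, the normal form $b_\xi=\bigvee_{\tau}(\tau\wedge t_\tau(P_\xi))$ uses the same family of terms $(t_\tau)_{\tau\in\At(\langle R\rangle)}$.
\end{enumerate}
This is possible because $R$ is fixed and finite and there are only finitely many Boolean terms in $n$ variables. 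The normal form itself holds because $\langle R\cup P_\xi\rangle$ is the free product of $\langle R\rangle$ and $\langle P_\xi\rangle$. Since the $S_\xi$ are distinct and share the root $R$, at most one of them equals $R$; discarding it, every $P_\xi$ is non-empty, and the $P_\xi$ are pairwise disjoint.

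Next we show that some $t_{\tau_0}$ is proper. Suppose instead that every $t_\tau\in\{0,1\}$. Then $b_\xi\in\langle R\rangle$, so $R$ would be a support strictly smaller than the minimal support $S_\xi$, a contradiction. This is exactly where the choice of minimal supports is needed, and I expect it to be the main point requiring care: with arbitrary supports the terms could all be trivial.

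For the final claim, fix a finite non-empty $H\subseteq I$ and compute
\[
\bigwedge_{\xi\in H}b_\xi=\bigvee_\tau\Bigl(\tau\wedge\bigwedge_{\xi\in H}t_\tau(P_\xi)\Bigr),
\]
using that distinct atoms $\tau$ of $\langle R\rangle$ are disjoint. Take $\tau=\tau_0$. Because $t_{\tau_0}>0$, each $t_{\tau_0}(P_\xi)$ is a positive element of $\langle P_\xi\rangle$, and the sets $R$ and $P_\xi$ ($\xi\in H$) are pairwise disjoint. Lemma~\ref{lem:support-independence} then gives $\tau_0\wedge\bigwedge_{\xi\in H}t_{\tau_0}(P_\xi)>0$, so the meet is positive. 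For the upper bound, $-t_{\tau_0}>0$, so $\tau_0\wedge -t_{\tau_0}(P_{\xi_0})>0$ for any fixed $\xi_0\in H$. This element is disjoint from $b_{\xi_0}$, hence disjoint from the meet, so the meet is strictly below $1$.
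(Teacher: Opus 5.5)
Your proof is correct and follows essentially the paper's route. You run a $\Delta$-system on finite supports, thin to a fixed root, private-part size and term pattern, and then apply Lemma~\ref{lem:support-independence} to the meets. The one real difference is that you use unique minimal supports to get a proper coefficient $t_{\tau_0}$. The paper instead notes that if every $t_\tau$ were $0$ or $1$, all $b_\xi$ would equal the same element of $\langle R\rangle$, contradicting their pairwise distinctness. So, contrary to your remark, minimal supports are not needed; likewise $\bigwedge_{\xi\in H}b_\xi<1$ follows directly from $\bigwedge_{\xi\in H}b_\xi\leqslant b_{\xi_0}<1$.
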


\begin{proof}
Choose pairwise distinct elements $c_\xi\in B\setminus\{0,1\}$, $\xi<\omega_1$, and finite supports $S_\xi\subseteq Y$ with $c_\xi\in\langle S_\xi\rangle$.  The family $\{S_\xi:\xi<\omega_1\}$ is uncountable.  Indeed, if only countably many supports occurred, then the union of the corresponding finite subalgebras would be countable, contradicting the choice of the pairwise distinct elements $c_\xi$.  Apply the $\Delta$-system lemma and then thin in three successive, simultaneous steps.  First fix a finite root $R$ so that the supports form a $\Delta$-system; next fix a common positive size $n$ of the private parts $P_\xi=S_\xi\setminus R$ and choose an enumeration of each $P_\xi$ by the same set of $n$ variable positions; finally, because only finitely many Boolean functions occur on the fixed finite set of root and private variables, thin once more so that all $c_\xi$ are represented by one and the same Boolean term.  The private parts are pairwise disjoint.  Their common size is positive: if $n=0$, then all surviving elements would lie in the finite algebra $\langle R\rangle$, contradicting their pairwise distinctness.

Writing this fixed term over the atoms of the finite algebra $\langle R\rangle$ gives terms $t_\tau$ with
\[
        c_\xi=\bigvee_{\tau\in\At(\langle R\rangle)}
        \bigl(\tau\wedge t_\tau(P_\xi)\bigr)
        \qquad(\xi<\omega_1).
\]
Put $b_\xi=c_\xi$ and pass to the resulting uncountable index set.  Since the $b_\xi$ are pairwise distinct while the pattern over $R$ is fixed, at least one coefficient $t_{\tau_0}$ is neither $0$ nor $1$; otherwise the displayed expression would be independent of $\xi$.  This coefficient is non-zero, and therefore, if $H\subseteq I$ is finite and non-empty, then
\[
        \tau_0\wedge\bigwedge_{\xi\in H}t_{\tau_0}(P_\xi)>0
\]
by Lemma~\ref{lem:support-independence}.  Hence $\bigwedge_{\xi\in H}b_\xi>0$.  The same meet is below any one $b_\xi$, and $b_\xi<1$, so it is $<1$.
\end{proof}

\begin{theorem}\label{thm:global-countable}
A Boolean algebra has the global $(\fns)^*$ property if and only if it is countable.
\end{theorem}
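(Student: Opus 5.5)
For the easy direction, suppose $\bfA$ is countable and enumerate it as $\bfA=\{a_0,a_1,\ldots\}$. Let $\bfB_n=\langle a_0,\ldots,a_n\rangle$; these are finite subalgebras increasing with union $\bfA$. For $a\in\pos{\bfA}$ let $N(a)$ be the least $n$ with $a\in\bfB_n$, and put
\[
s(a)=\pos{\bfB_{N(a)}}.
\]
\emph{Separation.} Let $a\wedge b=0$ with $a,b>0$, and suppose $N(a)\leqslant N(b)$. Then $-a>0$, and both $a$ and $-a$ lie in $\pos{\bfB_{N(a)}}\subseteq s(a)\cap s(b)$. They are disjoint, and $b\leqslant -a$.
\emph{Coherence.} If $a\wedge b>0$, then $a\wedge b\in\bfB_{\max(N(a),N(b))}$. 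Hence
\[
s(a\wedge b)\subseteq\pos{\bfB_{\max(N(a),N(b))}}\subseteq s(a)\cup s(b)\subseteq(s(a)\cup s(b))^{\wedge}.
\]
So $s$ witnesses global $(\fns)^*$.

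For the converse, suppose $s$ witnesses global $(\fns)^*$ on $\pos{\bfA}$ and $\bfA$ is uncountable. Since $\pos{\bfA}$ is a base closed under non-zero meets, Theorem~\ref{thm:projective-main} shows that $\bfA$ is projective. By Theorem~\ref{thm:projective-known}(2), $\bfA$ is a retract of a free algebra, so we may identify it with a subalgebra $B$ of some $\Fr(Y)$ and transport $s$ to $B$. Apply Lemma~\ref{lem:delta-pattern} to obtain $b_\xi$, the root $R$, the private parts $P_\xi$, and the coefficient $\tau_0$ with $0<t_{\tau_0}<1$.

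Next, fix finite supports for all members of $s(b_\xi)\cup s(-b_\xi)$ and take their union together with $R\cup P_\xi$; call it $S'_\xi$. Applying the $\Delta$-system lemma again to the sets $S'_\xi$ and thinning, we may assume they form a $\Delta$-system with a root $R'\supseteq R$. Its private parts $S'_\xi\setminus R'$ are pairwise disjoint. Moreover the $P_\xi$ were pairwise disjoint, so each variable of $P_\xi$ occurs in at most one $S'_\xi$ and hence not in the root. Only countably many $\xi$ are affected by the finitely many variables of $R'$, so after discarding those we also have $P_\eta\cap S'_\xi=\emptyset$ for $\eta\neq\xi$.

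Now take two distinct indices $\xi\neq\eta$ and put $x=b_\xi\wedge b_\eta$, which is $>0$ and $<1$ by Lemma~\ref{lem:delta-pattern}.
\begin{itemize}
\item Global separation applied to $x$ and $-x$ yields disjoint $c,d\in s(x)$ with $x\leqslant c$ and $-x\leqslant d$. Disjointness forces $c=x$ and $d=-x$, so $-x\in s(x)$.
\item By coherence, $-x$ is then a finite meet $\bigwedge_j g_j$ with each $g_j\in s(b_\xi)\cup s(b_\eta)$.
\item Consider some $g\in s(b_\xi)$. Its support is contained in $S'_\xi$, which is disjoint from $P_\eta$. Also $g\geqslant -x\geqslant -b_\eta\geqslant \tau_0\wedge -t_{\tau_0}(P_\eta)$.
\item Fix any valuation of $S'_\xi$ that lies in $\tau_0$. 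Since $-t_{\tau_0}>0$ is expressed in the disjoint variables $P_\eta$, it can be extended to a valuation making $-t_{\tau_0}(P_\eta)$ true. Hence that valuation makes $g$ true, so $g\geqslant\tau_0$. This is Lemma~\ref{lem:support-independence}-style independence.
\item Symmetrically, every $g_j$ from $s(b_\eta)$ satisfies $g_j\geqslant\tau_0$, so $-x\geqslant\tau_0$.
\end{itemize}
On the other hand, $x\wedge\tau_0\geqslant\tau_0\wedge t_{\tau_0}(P_\xi)\wedge t_{\tau_0}(P_\eta)>0$ by Lemma~\ref{lem:support-independence}, contradicting $-x\geqslant\tau_0$.

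The main obstacle I expect is the bookkeeping in the second thinning: ensuring that the supports of the finitely many members of $s(b_\xi)$ avoid the private variables $P_\eta$ of the other index. This is needed for the "$g\geqslant\tau_0$" step, which is where independence in the free algebra is really used. The identification of $\bfA$ with a subalgebra of a free algebra is harmless, since $(\fns)^*$ is stated purely in terms of the order on the algebra itself.
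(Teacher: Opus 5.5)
Your proof is correct and follows the paper's route: Theorem~\ref{thm:projective-main} plus an embedding into $\Fr(Y)$, the pattern of Lemma~\ref{lem:delta-pattern}, separation forcing $-x\in s(x)$, coherence writing $-x$ as a meet of members of the sets $s(b_\alpha)$, and Lemma~\ref{lem:support-independence} applied against $\tau_0\wedge -t_{\tau_0}$ on a private part disjoint from the relevant supports. The only difference is bookkeeping: you secure support-disjointness by a second $\Delta$-system thinning, so two indices suffice and every meetand dominates $\tau_0$, whereas the paper uses finite exceptional sets $E_\xi$ of uniform size $m$ and $m+2$ indices. (Your sentence claiming that each variable of $P_\xi$ lies in at most one $S'_\xi$ is false as stated, since a member of some $s(b_\zeta)$ may use variables of $P_\xi$; it is harmless, because after your discarding step $P_\eta\cap R'=\emptyset$ and hence $P_\eta\cap S'_\xi\subseteq S'_\eta\cap S'_\xi=R'$ gives $P_\eta\cap S'_\xi=\emptyset$.)
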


\begin{proof}
First suppose that $\bfA$ is countable.  Write $\bfA$ as an increasing union of finite subalgebras
\[
        \bfA_0\subseteq\bfA_1\subseteq\cdots,
        \qquad \bfA_0=\{0,1\}.
\]
For $a\in\pos{\bfA}$ let $\rho(a)$ be the least $n$ such that $a\in\bfA_n$, and define
\[
        s(a)=\pos{\bfA_{\rho(a)}}.
\]
If $a,b\in\pos{\bfA}$ are disjoint and, say, $\rho(a)\leqslant\rho(b)$, then $a$ and $-a$ belong to $s(a)\cap s(b)$, they are disjoint, $a\leqslant a$, and $b\leqslant -a$.  Thus finite separation holds.  If $a\wedge b>0$ and $m=\max\{\rho(a),\rho(b)\}$, then $s(a)\cup s(b)=\pos{\bfA_m}$: the finite algebras are increasing, and one of $a,b$ has rank exactly $m$.  Also $a\wedge b\in\bfA_m$, whence
\[
        s(a\wedge b)\subseteq\pos{\bfA_m}\subseteq(s(a)\cup s(b))^{\wedge}.
\]
So $\bfA$ has global $(\fns)^*$.

Conversely, suppose that $\bfA$ is uncountable and that $s\colon\pos{\bfA}\to[\pos{\bfA}]^{<\omega}$ witnesses global $(\fns)^*$.  Since $\pos{\bfA}$ is a base of $\bfA$ closed under non-zero meets, Theorem~\ref{thm:projective-main} implies that $\bfA$ is projective.  By Theorem~\ref{thm:projective-known}, we may regard $\bfA$ as a subalgebra of a free Boolean algebra $F=\Fr(Y)$.

Apply Lemma~\ref{lem:delta-pattern}.  We obtain an uncountable family $(b_\xi)_{\xi\in I}$, a finite root $R$, pairwise disjoint private supports $P_\xi$, fixed coefficients $t_\tau$, and an atom $\tau_0\in\At(\langle R\rangle)$ such that $0<t_{\tau_0}<1$ and \eqref{eq:delta-pattern} holds.  For every $u\in F^+$ choose a finite support $S_u\subseteq Y$ with $u\in\langle S_u\rangle$.  For $\xi\in I$ put
\[
        E_\xi=\Bigl\{\eta\in I:
        P_\eta\cap\bigcup_{u\in s(b_\xi)}S_u\neq\emptyset\Bigr\}.
\]
Each $E_\xi$ is finite, because $s(b_\xi)$ is finite, each $S_u$ is finite, and the sets $P_\eta$ are pairwise disjoint and non-empty.  Thinning $I$ if necessary, assume that $|E_\xi|=m$ for all $\xi\in I$.  Choose distinct
\[
        \alpha_0,\ldots,\alpha_{m+1}\in I
\]
and put
\[
        H=\{\alpha_0,\ldots,\alpha_{m+1}\},
        \qquad
        c=\bigwedge_{\alpha\in H}b_\alpha,
        \qquad h=-c.
\]
By Lemma~\ref{lem:delta-pattern}, $0<c<1$, so $h\in\pos{\bfA}$.

Apply finite separation to the disjoint pair $c,h$.  There are disjoint $p,q\in s(c)\cap s(h)$ with $c\leqslant p$ and $h\leqslant q$.  Since $p\wedge q=0$,
\[
        p\leqslant -q\leqslant -h=c,
\]
and hence $p=c$ and $q=h$.  In particular, $h\in s(c)$.

By Lemma~\ref{lem:finite-meet-star}, $h$ is a non-zero finite meet of elements from
\[
        \bigcup_{\alpha\in H}s(b_\alpha).
\]
Choose a finite non-empty set $W\subseteq\bigcup_{\alpha\in H}s(b_\alpha)$ such that
\[
        h=\bigwedge W .
\]
Here $W$ is allowed to be a singleton: by definition, $E^\wedge$ contains all non-zero meets of non-empty finite subsets, including one-element meets.
Since $0<t_{\tau_0}<1$ and the private supports $P_\alpha$ are pairwise disjoint,
\[
        \tau_0\wedge c
        =\tau_0\wedge\bigwedge_{\alpha\in H}t_{\tau_0}(P_\alpha)>0
\]
by Lemma~\ref{lem:support-independence}.  As $c=-h=\bigvee_{w\in W}-w$, there is $w\in W$ such that
\[
        \tau_0\wedge -w>0 .
\]
Choose $\alpha^*\in H$ with $w\in s(b_{\alpha^*})$.  Since $|E_{\alpha^*}|=m$ and $|H|=m+2$, there is
\[
        \beta\in H\setminus(E_{\alpha^*}\cup\{\alpha^*\}).
\]
Then $P_\beta\cap S_w=\emptyset$.  By construction $P_\beta\cap R=\emptyset$ as well.  Consequently
\[
        P_\beta\cap(R\cup S_w)=\emptyset,
\]
where $R\cup S_w$ supports $\tau_0\wedge-w$, whereas $P_\beta$ supports $-t_{\tau_0}(P_\beta)$.  As $0<t_{\tau_0}<1$, the latter element is non-zero, and Lemma~\ref{lem:support-independence} gives
\[
        d=(\tau_0\wedge -w)\wedge -t_{\tau_0}(P_\beta)>0 .
\]
Restricting the normal form \eqref{eq:delta-pattern} for $b_\beta$ to the root atom $\tau_0$ gives
\[
        \tau_0\wedge b_\beta
        =\tau_0\wedge t_{\tau_0}(P_\beta).
\]
Since $d\leqslant\tau_0\wedge-t_{\tau_0}(P_\beta)$, we have $d\wedge b_\beta=0$, and hence
\[
        d\leqslant-b_\beta\leqslant h.
\]
But $d\leqslant -w$, while $h\leqslant w$ because $h=\bigwedge W$.  This contradiction proves that no uncountable Boolean algebra has global $(\fns)^*$.
\end{proof}

\begin{remark}\label{rem:global-local}
Theorem~\ref{thm:global-countable} shows that the local nature of Theorems~\ref{thm:projective-main} and \ref{thm:weak-main} is not cosmetic.  In an uncountable projective Boolean algebra, the coherent finite-separation map must be placed on a carefully chosen base, not on the whole positive cone.
\end{remark}

\section{Stone-dual consequences and further directions}\label{sec:further}

We first record the Stone-dual form of the main results.  Let $K$ be a Stone space.  A family
\[
        \calX\subseteq\Clop(K)\setminus\{\emptyset\}
\]
is a \emph{clopen decomposition base} if every non-empty clopen subset of $K$ is a finite disjoint union of members of $\calX$; it is a \emph{clopen $\pi$-base} if every non-empty open subset of $K$ contains a member of $\calX$.  A finite-separation map, and its coherent version, are defined by translating Definition~\ref{def:fns-star}: replace $\leqslant$, $\wedge$, and $0$ by $\subseteq$, $\cap$, and $\emptyset$, respectively.

\begin{corollary}[Stone-dual form]\label{cor:stone-dual}
Let $K$ be a Stone space.
\begin{enumerate}
\item Within the class of Stone spaces, $K$ is a Dugundji compactum, equivalently an AE$(0)$ compactum, equivalently a retract of a Cantor cube, if and only if $K$ has a clopen decomposition base, closed under non-empty intersections, which carries a coherent finite-separation map.
\item $K$ is openly generated if and only if $\Clop(K)\setminus\{\emptyset\}$ carries a finite-separation map.
\item $K$ is co-absolute with a Dugundji compactum if and only if $K$ has a clopen $\pi$-base, closed under non-empty intersections, which carries a coherent finite-separation map.
\item $K$ is metrisable if and only if $\Clop(K)\setminus\{\emptyset\}$ carries a coherent finite-separation map.
\end{enumerate}
\end{corollary}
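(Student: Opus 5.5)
We translate each of the four items through Stone duality, $K\mapsto\Clop(K)$. Under this duality, non-empty clopen sets correspond to $\pos{\Clop(K)}$, and intersections correspond to meets. Clopen decomposition bases correspond to bases in the sense of Section~\ref{sec:preliminaries}. A clopen $\pi$-base corresponds to a $\pi$-base of $\Clop(K)$, because every non-empty open set contains a non-empty clopen set. The translated (coherent) finite-separation maps are literally the maps of Definition~\ref{def:fns-star} for $\Clop(K)$. So each item reduces to an algebraic statement about $\bfA=\Clop(K)$ together with a known topological dictionary.

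For (1), we invoke Theorem~\ref{thm:projective-main}. It remains to recall that $\Clop(K)$ is projective if and only if $K$ is a retract of a Cantor cube. This follows from Theorem~\ref{thm:projective-known}(2): a retraction of $\Fr(\kappa)$ onto $\bfA$ dualises to a retraction of $2^\kappa$ onto $K$, and conversely. For zero-dimensional compacta, being a retract of a Cantor cube is equivalent to being Dugundji and to being AE$(0)$ by Haydon and Shchepin \cite{Haydon,Shchepin}. We cite these equivalences rather than reprove them.

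For (2), Proposition~\ref{prop:fn-fns-equivalence} shows that $\pos{\Clop(K)}$ carries a finite-separation map if and only if $\Clop(K)$ has $(\fn)$. By \cite[Theorem~2.2.3]{HeindorfShapiro} this holds if and only if $\Clop(K)$ is rc-filtered. Shchepin's characterisation then identifies rc-filtered Boolean algebras (clubs of countable relatively complete subalgebras) as exactly the duals of openly generated compacta. This last identification is the main obstacle. It needs the dictionary that $\bfB\rc\Clop(K)$ holds precisely when the dual quotient map $K\to\operatorname{Ult}(\bfB)$ is open. It also needs Shchepin's definition of open generation via an inverse system of metrisable compacta with open bonding maps, matched to a club of countable subalgebras. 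We will state the open-map dictionary explicitly: the image of a clopen $b$ is the clopen set dual to $q(b)$. We will then cite Shchepin (as recorded in \cite{HeindorfShapiro}) for the club formulation.

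For (3), we combine Theorem~\ref{thm:weak-main} with the fact that co-complete Boolean algebras are exactly those whose Stone spaces are co-absolute. This holds because the Boolean completion corresponds to the Gleason cover, so $K$ is co-absolute with the Stone space of a projective algebra. Every Dugundji compactum is co-absolute with a zero-dimensional Dugundji one; this is a standard fact we cite, namely Shapiro's theorem that co-absolute classes of Dugundji spaces contain Stone spaces of projective algebras. So the class in (3) matches weak projectivity.

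For (4), Theorem~\ref{thm:global-countable} says the condition holds if and only if $\Clop(K)$ is countable. For a Stone space, a countable clopen algebra is a countable base of the topology, which gives metrisability. Conversely, a compact metrisable zero-dimensional space has only countably many clopen sets, since each is a finite union of members of a countable base.
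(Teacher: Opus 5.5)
Your proposal is correct and follows essentially the same route as the paper. You translate through $\Clop(K)$ and apply Theorem~A with the retract formulation and Haydon's theorem for (1), Proposition~\ref{prop:fn-fns-equivalence} with \cite[Theorem~2.2.3]{HeindorfShapiro} for (2), Theorem~B with $\RO(K)$ and co-absoluteness for (3), and Theorem~C with second countability for (4). Your extra steps only make explicit what the paper leaves implicit: in (2), passing through rc-filteredness and the dictionary between open maps and relatively complete subalgebras; in (3), citing that an arbitrary Dugundji compactum is co-absolute with a zero-dimensional one.
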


\begin{proof}
Put $\bfA=\Clop(K)$.  Under Stone duality, non-zero meets become non-empty intersections, algebraic bases become clopen decomposition bases, and algebraic $\pi$-bases become clopen $\pi$-bases.  A free Boolean algebra on $\kappa$ generators has Stone space $2^\kappa$, and retractions dualise to retractions.  Thus the first assertion follows from Theorem~\ref{thm:projective-main}.  Within the class of Stone spaces, Haydon's equivalence of Dugundji compacta and AE$(0)$ compacta \cite[Theorems~1 and~3]{Haydon}, together with the standard embedding of every Stone space in a Cantor cube, identifies these spaces exactly with retracts of Cantor cubes; see also the inverse-spectral background in Shchepin \cite[Sections~1--2]{Shchepin} and the algebraic formulation in Heindorf--Shapiro \cite[Theorem~1.3.2]{HeindorfShapiro}.  The second assertion follows from Proposition~\ref{prop:fn-fns-equivalence} and the Stone-dual characterisation of the Freese--Nation property by openly generated compacta; see \cite[Theorem~2.2.3]{HeindorfShapiro}.

The completion of $\Clop(K)$ is $\RO(K)$, and two compacta are co-absolute precisely when their regular-open algebras are isomorphic.  Hence the third assertion is the Stone dual of Theorem~\ref{thm:weak-main}.  Finally, Theorem~\ref{thm:global-countable} says that the coherent map exists on all non-empty clopen sets exactly when $\Clop(K)$ is countable.  For a Stone space this is equivalent to second countability, and hence to metrisability.
\end{proof}

We next record a boundary of the present method outside Boolean algebras.

\begin{remark}[Orthomodular lattices]\label{rem:oml}
For background on orthomodular lattices and their use as quantum logics we refer to Kalmbach~\cite{Kalmbach} and Pt\'ak--Pulmannov\'a~\cite{PtP}.  The order-theoretic definition of $(\fn)$ makes sense for every lattice.  In an ortholattice, and hence in an orthomodular lattice, there is also a natural separation form: for orthogonal pairs $a\ortho b$, meaning $a\leqslant b^\perp$, ask for $c,d\in s(a)\cap s(b)$ with $c\ortho d$, $a\leqslant c$, and $b\leqslant d$.  With this orthogonality convention, global $(\fn)$ and global $(\fns)$ remain equivalent; the proof of Proposition~\ref{prop:fn-fns-equivalence} uses only the order and the orthocomplementation.  If one separates all meet-zero pairs instead, the equivalence is no longer formal, because $a\wedge b=0$ need not imply $a\leqslant b^\perp$.

The coherent strengthening can also be written down, replacing Boolean complements by orthocomplements and finite meets by lattice meets.  The Boolean characterisation itself, however, does not transfer verbatim.  The proofs above use finite disjoint decompositions over Boolean bases, distributive refinement, relatively complete Boolean subalgebras and their projection maps, Stone duality, and the independence of finite supports in free Boolean algebras.  A particularly simple obstruction is that the global countability theorem is false for orthomodular lattices.  Let $I$ be uncountable and let $L_I$ be the horizontal sum of copies of the four-element Boolean algebra; write the two proper elements in the $i$th block as $p_i$ and $q_i=p_i^\perp$.  Then $L_I$ is an uncountable orthomodular lattice, and
\[
        s(1)=\{1\},\qquad s(p_i)=s(q_i)=\{p_i,q_i\}
        \quad(i\in I)
\]
witnesses global $(\fns)^*$ in the orthogonality sense.  The only non-trivial orthogonal pairs are $p_i,q_i$, and the coherence condition is immediate because non-zero meets occur only inside a single four-element block or with $1$.

Thus the part of the theory that should pass is the local, blockwise Boolean part.  A useful orthomodular analogue would need additional hypotheses ensuring enough compatible or central finite refinements; otherwise coherent finite separation measures only the Boolean pieces and misses the genuinely quantum behaviour.
\end{remark}
\begin{problem}\label{prob:oml}
Find an orthomodular-lattice analogue of Theorems~\ref{thm:projective-main} and~\ref{thm:weak-main}, using the standard block and compatibility theory of orthomodular lattices as in \cite{Kalmbach,PtP}.  In particular, decide whether coherent finite separation should be imposed for meet-zero pairs, for orthogonal pairs, or only inside Boolean blocks, and identify the additional hypotheses under which projectivity can be recovered from such finite data.
\end{problem}

\begin{problem}\label{prob:skeleton-compatible}
Can the base in Theorem~\ref{thm:projective-main} always be chosen canonically from a fixed additive relatively complete skeleton in the standard sense of Definition~\ref{def:skeleton-terminology}, in such a way that it is closed under all skeleton projections $q_{\bfC}^{\bfA}$?
\end{problem}

A positive answer would turn the present characterisation into a more invariant tool: the witnessing base would not merely exist, but would reflect a chosen projective resolution of the algebra.

\begin{problem}\label{prob:bounded-witnesses}
For $n<\omega$, classify those Boolean algebras admitting a base or $\pi$-base $X$ and a coherent finite-separation map $s$ with $|s(x)|\leqslant n$ for every $x\in X$.
\end{problem}

Even the first few finite bounds should distinguish meaningful subclasses of projective or weakly projective algebras.  The countable construction in Theorem~\ref{thm:global-countable} gives finite witnesses, but with no uniform bound in general.

\section*{Acknowledgements}
The first-named author gratefully acknowledges support received from
NCN Sonata-Bis~13 (2023/50/E/ST1/00067).  The second- and third-named authors would like to thank W.~Bielas, W.~Kubi\'{s}, and the participants of the Topology and Set Theory Seminar in Katowice for helpful discussions and valuable comments.

\end{document}